\newcommand{\R}{\mathbb{R}}
\newcommand{\C}{\mathbb{C}}
\newcommand{\N}{\mathbb{N}}
\newcommand{\K}{\mathbb{K}}
\newcommand{\Z}{\mathbb{Z}}
\newcommand{\supp}{\mbox{supp}\,}
\newtheorem{theorem}{Theorem}[section]
\newtheorem{corollary}[theorem]{Corollary}
\newtheorem{proposition}[theorem]{Proposition}
\newtheorem{problem}[theorem]{Problem}
\theoremstyle{definition}
\newtheorem{remark}[theorem]{Remark}
\newtheorem{definition}[theorem]{Definition}
\newtheorem{example}[theorem]{Example}
\begin{document}

\title[Dynamics of weighted composition operators]{Dynamics of weighted composition operators on function spaces defined by local properties}

\author[T. Kalmes]{Thomas Kalmes}
\address{Faculty of Mathematics\\ Chemnitz Technical University\\
	09107 Chemnitz, Germany}
\email{thomas.kalmes@math.tu-chemnitz.de}

\date{}

\begin{abstract}
	We study topological transitivity/hypercyclicity and topological (weak) mixing for weighted composition operators on locally convex spaces of scalar-valued functions which are defined by local properties. As main application of our general approach we characterize these dynamical properties for weighted composition operators on spaces of ultradifferentiable functions, both of Beurling and Roumieu type, and on spaces of zero solutions of elliptic partial differential equations. Special attention is given to eigenspaces of the Laplace operator and the Cauchy-Riemann operator, respectively. Moreover, we show that our abstract approach unifies existing results which characterize hypercyclicity, resp.\ topological mixing, of (weighted) composition operators on the space of holomorphic functions on a simply connected domain in the complex plane, on the space of smooth functions on an open subset of $\R^d$, as well as results characterizing topological transitiviy of such operators on the space of real analytic functions on an open subset of $\R^d$. 
\end{abstract}

\subjclass[2010]{Primary 47A16, 47B33; Secondary 46E10}

\keywords{Hypercyclic operator; Weighted composition operator; Topologically mixing operator; Locally convex spaces of functions; Ultradifferentiable functions; Elliptic differential operator}

\maketitle

\section{Introduction}

Dynamical properties like topological transitivity/hypercyclicity and to\-po\-lo\-gi\-cal (weak) mixing for weighted composition operators on various function spaces have been investigated by many authors in different settings. Recall, that an operator, i.e.\ a continuous linear self-map $T$ on a topological vector space $E$, is called topologically transitive, resp.\ topologically mixing, if for every pair of non-empty, open subsets $U,V$ of $E$ the sets $T^m(U)$ and $V$ intersect for some $m\in\N$, resp.\ for all sufficiently large $m\in\N$, whereas $T$ is called topologically weakly mixing if $T\oplus T$ is topologically transitive on $E\oplus E$. Moreover, $T$ is hypercyclic if there is $x\in E$ such that its orbit under $T$, i.e.\ the set $\{T^m x;\,m\in\N_0\}$, is dense in $E$. Clearly, every hypercyclic operator is topologically transitive, and the converse holds for operators on separable, complete, metrizable topological vector spaces by Birkhoff's Transitivity Criterion \cite[Theorem 2.19]{Grosse-ErdmannPeris}.

The most prominent setting for hypercyclic weighted composition operators is the space of holomorphic functions $\mathscr{H}(X)$ endowed with the compact-open topology on an open subset $X$ of the complex plane. Starting from Birkhoff's result \cite{Birkhoff} from which it follows that the translation operators $T_a(f)(\cdot):=f(\cdot+a), a\in\C\backslash\{0\},$ on the space of entire functions $\mathscr{H}(\C)$ are hypercyclic, many authors have generalized Birkhoff's result into various directions by considering more general (weighted) composition operators $C_{w,\psi}(f):=w\cdot (f\circ\psi)$ for $f\in\mathscr{H}(X)$, where $w\in\mathscr{H}(X)$ and $\psi:X\rightarrow X$ is holomorphic (see e.g.\ \cite{BernalMontes}, \cite{Grosse-ErdmannMortini}, \cite{YousefiRezaei}, \cite{Bes}, and the references therein). For holomorphic functions in several variables some partial results on hypercyclicity for special unweighted composition operators have been obtained in \cite{AbeZappa}, \cite{Bernal}, and \cite{Leon-Saavedra}, and only recently for arbitrary composition operators (even for holomorphic functions in several variables on a connected Stein manifold) in \cite{Zajac}.

Considering harmonic functions instead of holomorphic functions, in \cite{Dzagnidze}, \cite{Armitage} hypercyclicity of generalized translation operators has been investigated in this context while sufficient conditions for hypercyclicity of such special composition operators on kernels of elliptic partial differential operators have been presented in \cite{CalderonMueller}, among other things. A sufficient condition for hypercyclicity of general composition operators on arbitrary kernels of partial differential operators was obtained in \cite{KalmesNiess}.

While for unweighted composition operators on the space of real analytic functions $\mathscr{A}(X)$ on open subsets $X$ of $\R^d$ topological transitivity has been investigated in \cite{BonetDomanski}, hypercyclicity and topological mixing of weighted composition operators on the space of smooth functions $C^\infty(X)$, where again $X\subseteq\R^d$ is open, have only very recently been characterized in \cite{Przestacki}. 

For Banach spaces of functions dynamical properties of (weighted) composition operators have also been investigated. There are many results in the context of Banach spaces of holomorphic functions, see e.g.\ \cite{BourdonShapiro}, \cite{GallardoMontes}, \cite{MirallesWolf} and references therein. Characterizations of hypercyclicity of weighted composition operators on Banach spaces of continuous functions and on $L^p(\mu)$-spaces have been obtained in \cite{Kalmes}, among other things.\\

The aim of the present paper is to present a general approach to topological transitivity and topological (weak) mixing of weighted composition operators on locally convex spaces of scalar valued functions which are defined by local properties. In section \ref{local function spaces} we introduce the setting of general locally convex sheaves of functions which gives the appropriate general framework for our objective. This general approach enables us to give an almost characterization of these properties in section \ref{dynamical properties} (Theorem \ref{almost characterization of transitivity} and Theorem \ref{almost characterization of mixing}). For many concrete function spaces these almost characterizations can be made into characterizations with only a minimal additional effort. This is shown in section \ref{dynamics of weighted composition operators on concrete local function spaces} where we not only recover and unify most of the above mentioned results by our general approach but where our abstract setting also permits to improve these results into one direction or the other.

Moreover, as one of the main application of our approach we show in section \ref{ultradifferentiable} that topological transitivity and topological weak mixing for weighted composition operators $C_{w,\psi}$ on spaces of ultradifferentiable functions of Beurling type as well as of Roumieu type, respectively, are equivalent and we characterize these properties together with topological mixing in terms of the weight $w$ and the symbol $\psi$ (Theorems \ref{dynamics in Beurling} and \ref{dynamics in Roumieu}).

As a second main application of our abstract results we show in section \ref{kernels of elliptic differential operators} that for weighted composition operators on kernels of elliptic partial differential operators in $C^\infty(X)$ for open subsets $X$ of $\R^d$ which are homeomorphic to $\R^d$, hypercyclicity and topological weak mixing are equivalent whenever $|w|\leq 1$, and we again characterize these properties as well as topological mixing in terms of the weight function and the symbol of the operator (Theorem \ref{dynamics on elliptic kernels}). We pay special attention to eigenspaces of the Laplace operator (Corollary \ref{dynamics on eigenspaces of the laplace operator}) and of the Cauchy-Riemann operator (Corollary \ref{dynamics on eigenspaces of the cauchy-riemann operator}). In the latter case we show that weak mixing, hypercyclicity, and mixing for weighted composition operators coincide whenever $|w|\leq 1$. Additionally, we characterize in terms of the weight and symbol which weighted composition operators are well-defined on the eigenspaces of the Laplace operator and of the Cauchy-Riemann operator (Proposition \ref{well-definedness on eigenspaces of laplace and cauchy-riemann}).\\ 

Throughout, we use standard notation and terminology from functional analysis. For anything related to functional analysis which is not explained in the text we refer the reader to \cite{MeiseVogt}. Moreover, we use common notation from the theory of linear partial differential operators. For this we refer the reader to \cite{Hoermander}. Finally, for notions and results from dynamics of linear operators which are not explained in the text we refer the reader to \cite{BayartMatheron2} and \cite{Grosse-ErdmannPeris}. 

\section{Function spaces defined by local properties}\label{local function spaces}

In order to deal with weighted composition operators on several function spaces at once we choose the framework of sheaves. Here and in the sequel let $\K\in\{\R,\C\}$.

\begin{definition}
	Let $\Omega$ be a locally compact, $\sigma$-compact, non-compact Hausdorff space and $\mathscr{F}$ a sheaf of functions on $\Omega$, i.e.\
	\begin{itemize}
		\item For each open subset $X\subseteq\Omega$ $\mathscr{F}(X)$ is a vector space of $\K$-valued functions and if $Y\subseteq\Omega$ is another open set with $Y\subseteq X$ the restriction mapping
		$$r_X^Y:\mathscr{F}(X)\rightarrow\mathscr{F}(Y),f\mapsto f_{|Y}$$
		is well-defined.
		
		\item (Localization) For every open cover $(X_\iota)_{\iota\in I}$ of an open set $X\subseteq\Omega$ and for each $f,g\in\mathscr{F}(X)$ with $f_{|X_\iota}=g_{|X_\iota} (\iota\in I)$ it holds $f=g$.
		
		\item (Gluing) For each open cover $(X_\iota)_{\iota\in I}$ of an open set $X\subseteq\Omega$ and for all $(f_\iota)_\iota\in\prod_{\iota\in I}\mathscr{F}(X_\iota)$ with $f_{\iota|X_\iota\cap X_\kappa}=f_{\kappa|X_\iota\cap X_\kappa}\, (\iota,\kappa\in I)$ there is $f\in\mathscr{F}(X)$ with $f_{|X_\iota}=f_\iota\,(\iota\in I)$.
	\end{itemize}
	From these defining properties of a sheaf of functions it follows immediately that for every open subset $X\subseteq\Omega$ and each open, relatively compact exhaustion $(X_n)_{n\in\N_0}$ of $X$ the spaces $\mathscr{F}(X)$ and the projective limit $\mbox{proj}_{\leftarrow n}(\mathscr{F}(X_n),r_{X_{n+1}}^{X_n})$, i.e.\ the subspace
	$$\{(f_n)_{n\in\N_0}\in\prod_{n\in\N_0}\mathscr{F}(X_n);\,\forall\,n\in\N_0: f_n=r_{X_{n+1}}^{X_n}(f_{n+1})\}$$
	of $\prod_{n\in\N_0}\mathscr{F}(X_n)$ are algebraically isomorphic via the mapping
	$$\mathscr{F}(X)\rightarrow\mbox{proj}_{\leftarrow n}(\mathscr{F}(X_n),r_{X_{n+1}}^{X_n}),f\mapsto (r_X^{X_n}(f))_{n\in\N_0}=(f_{|X_n})_{n\in\N_0},$$
	where injectivity follows from the localization property and surjectivity from the gluing property of a sheaf.
	
	In order to be able to apply results from functional analysis, we define the following properties for a sheaf of functions $\mathscr{F}$ on $\Omega$:
	
	\begin{enumerate}
		\item[($\mathscr{F}1$)] For every open subset $X\subseteq\Omega$ the function space $\mathscr{F}(X)$ is a webbed and ultrabornological Hausdorff locally convex space (which is satisfied, for example, if $\mathscr{F}(X)$ is a Fr\'echet space). Additionally, we assume that $\mathscr{F}(X)\subseteq C(X)$ for every open $X\subseteq\Omega$ and that for each $x\in X$ the point evaluation $\delta_x$ in $x$ is a continuous linear functional on $\mathscr{F}(X)$.
		
		It thus follows that for open $X,Y\subseteq\Omega$ with $Y\subseteq X$ the restriction map $r_X^Y$ has closed graph, hence is continuous by De Wilde's Closed Graph Theorem (see e.g.\ \cite[Theorem 24.31]{MeiseVogt}).
		
		Moreover, for every open, relatively compact exhaustion $(X_n)_{n\in\N_0}$ of $X$ we assume that the above mentioned algebraic isomorphism between $\mathscr{F}(X)$ and $\mbox{proj}_{\leftarrow n}(\mathscr{F}(X_n),r_{X_{n+1}}^{X_n})$ is even a topological isomorphism.
		
		\item[($\mathscr{F}2$)] For every compact $K\subseteq \Omega$ there is $f_K\in\mathscr{F}(\Omega)$ such that $f_K(x)\neq 0$ for each $x\in K$.
		
		\item[($\mathscr{F}3$)] For every pair of distinct points $x,y$ in $\Omega$ there is  $f\in\mathscr{F}(\Omega)$ with $f(x)=0$ and $f(y)=1$.
	\end{enumerate}
\end{definition}

\newpage

\begin{remark}\label{properties of local function sheaves}
	\hspace{2em}
	\begin{itemize}
		\item[i)] For a sheaf $\mathscr{F}$ on $\Omega$ with the property that $\mathscr{F}(X)$ is a locally convex space and $r_X^Y$ is continuous for each open $Y\subseteq X\subseteq\Omega$ (by definition, this means that $\mathscr{F}$ is a locally convex sheaf on $\Omega$) it follows immediately that for any open, relatively compact exhaustion $(X_n)_{n\in\N}$ of $X$ the canonical isomorphism between $\mathscr{F}(X)$ and $\mbox{proj}_{\leftarrow n}(\mathscr{F}(X_n),r_{X_{n+1}}^{X_n})$ is continuous. Therefore, if $\mathscr{F}$ is a locally convex sheaf of continuous functions such that $\mathscr{F}(X)$ is a Fr\'echet space for each open $X\subseteq\Omega$ on which $\delta_x$ is a continuous linear functional for every $x\in X$, it follows from the Open Mapping Theorem and the fact that Fr\'echet spaces are ultrabornological (see e.g.\ \cite[Remark 24.15 c)]{MeiseVogt}) and webbed (see e.g.\ \cite[Corollary 24.29]{MeiseVogt}) that $(\mathscr{F}1)$ is satisfied. 
		\item[ii)] For a sheaf $\mathscr{F}$ on $\Omega$ satisfying $(\mathscr{F}1)$ it follows from $\delta_x\in\mathscr{F}(X)'$ for each $x\in X$ that the inclusion mapping
		$$\mathscr{F}(X)\hookrightarrow C(X),f\mapsto f$$
		has closed graph - where we equip $C(X)$ as usual with the compact-open topology. Since $\mathscr{F}(X)$ is supposed to be ultrabornological it follows from De Wilde's Closed Graph Theorem that this inclusion is continuous, i.e.\ the topology carried by $\mathscr{F}(X)$ is finer than the compact-open topology.
		\item[iii)] If $\mathscr{F}$ contains constant functions property $(\mathscr{F}3)$ means precisely that $\mathscr{F}(\Omega)$ separates points.
	\end{itemize}
\end{remark}

\begin{example}\label{examples of sheaves}
	\hspace{2em}
	\begin{itemize}
		\item[i)] For a $\sigma$-compact, locally compact, non-compact Hausdorff space $\Omega$ the sheaf $C$ of continuous functions satisfies $(\mathscr{F}1)-(\mathscr{F}3)$, i.e.\ for an open subset $X\subseteq\Omega$ let $C(X)$ be the space of $\K$-valued continuous functions on $X$ equipped with the compact-open topology, that is, the locally convex topology defined by the family of seminorms $\{\|\cdot\|_K;\,K\subset X\mbox{ compact}\}$, where for a compact subset $K\subset X$
		$$\forall\,f\in C(X):\,\|f\|_K:=\sup_{x\in K}|f(x)|.$$
		Recall that locally compact spaces are completely regular (see e.g.\ \cite[Theorem 3.3.1]{Engelking}) so that $(\mathscr{F}3)$ is indeed satisfied.
		
		\item[ii)] For $\Omega=\R^d$ and $r\in\N_0\cup\{\infty\}$ we denote by $C^r$ the sheaf of $r$-times continuously differentiable functions, i.e.\ for every open $X\subseteq\R^d$ let $C^r(X)$ be the space of $\K$-valued functions which are $r$-times continuously differentiable. We equip $C^r(X)$ with the topology of local uniform convergence of all partial derivatives up to order $r$, i.e.\ the locally convex topology defined by the family of seminorms $\{\|\cdot\|_{l,K};\,l\in\N_0, l<r+1, K\subset X\mbox{ compact}\}$, where for $l<r+1$ and $K\subset X$ compact
		$$\forall\,f\in C^r(X):\|f\|_{l,K}:=\sup_{|\alpha|\leq l}\sup_{x\in K}|\partial^\alpha f(x)|.$$
		This makes $C^r(X)$ a separable Fr\'echet space and the sheaf $C^r$ on $\R^d$ is easily seen to satisfy $(\mathscr{F}1)-(\mathscr{F}3)$.
		
		\item[iii)] For $\Omega=\C$ let $\mathscr{H}$ be the sheaf of holomorphic functions, i.e.\ for $X\subseteq\C$ let $\mathscr{H}(X)$ denote the space of holomorphic functions on $X$ endowed with the compact-open topology. Then $\mathscr{H}(X)$ is a separable Fr\'echet space and it follows easily that $(\mathscr{F}1)-(\mathscr{F}3)$ are satisfied. More generally than this example is v).
		
		\item[iv)] Let again $\Omega=\R^d$ and denote by $\mathscr{A}$ the sheaf of real analytic functions, that is, for open $X\subseteq\R^d$ $\mathscr{A}(X)$ is the space of real analytic functions of $X$. One apparent way of equipping $\mathscr{A}(X)$ with a locally convex topology is by considering the finest locally convex topology such that all the restriction maps
		$$\mathscr{H}(U)\rightarrow\mathscr{A}(X),f\mapsto f_{|X}$$
		are continuous, where $U\subseteq\C^d$ is an arbitrary open set for which $X\subseteq U\cap\R^d$ and where $\mathscr{H}(U)$ is equipped with the compact-open topology. It is not hard to see that this (inductive) topology is Hausdorff. Another way of endowing $\mathscr{A}(X)$ with a natural locally convex topology is by taking the initial topology with respect to all restriction maps
		$$r_K:\mathscr{A}(X)\rightarrow\mathscr{H}(K), f\mapsto f_{|K},$$
		where $K\subseteq X$ is an arbitrary compact set and $\mathscr{H}(K)$ denotes the space of germs of holomorphic functions on $K$ equipped with the locally convex inductive limit topology $\mathscr{H}(K)=\mbox{ind}_{m\in\N}\mathscr{H}^\infty(U_m)$, where $(U_m)_{m\in\N}$ is a (decreasing) basis of $\C^d$-neighborhoods of $K$ and $\mathscr{H}^\infty(U_m)$ denotes the Banach space of bounded holomorphic functions on $U_m$ equipped with the supremum norm. Choosing a compact exhaustion $(K_n)_{n\in\N}$ of $X$ it follows that topologized in this way $\mathscr{A}(X)$ equals the (topological) projective limit of the projective sequence $(\mathscr{H}(K_n))_{n\in\N}$ with restrictions as linking maps. 
		
		Again, it is not hard to see that the first (inductive) topology on $\mathscr{A}(X)$ is coarser than the second (projective) topology. That these two topologies actually coincide is a fundamental result due to Martineau (see \cite{Martineau}). Since $\mathscr{H}(U)$ is a Fr\'echet space, thus ultrabornological, it follows that $\mathscr{A}(X)$ is ultrabornological as the inductive limit of ultrabornological spaces. As an $LB$-space, $\mathscr{H}(K)$ is webbed for every compact $K\subseteq X$ and therefore $\mathscr{A}(X)$ is webbed as the countable projective limit of webbed spaces.
		
		Now let $X\subseteq\R^d$ be open and let $(X_n)_{n\in\N_0}$ be an open, relatively compact exhaustion of $X$
		$$i:\mathscr{A}(X)\rightarrow\mbox{proj}_{\leftarrow n}(\mathscr{A}(X_n),r_{X_{n+1}}^{X_n}), f\mapsto (r_X^{X_n}(f))_{n\in\N_0}.$$
		We show that the continuous bijection $i$ is open. By the projective description of the topology on $\mathscr{A}(X)$ for any zero neighborhood $V$ in $\mathscr{A}(X)$ there is a compact subset $K$ of $X$ and an absolutely convex zero neighborhood $W$ in $\mathscr{H}(K)$ such that $r_K^{-1}(W)\subseteq V$. Thus, for every complex neighborhood $U$ of $K$ there is $\delta>0$ with
		$$r_K^{-1}\big(\{g\in\mathscr{H}(K);\,g\in\mathscr{H}^\infty(U),\|g\|_{\infty,U}<\delta\}\big)\subseteq V,$$
		where $\|\cdot\|_{\infty,U}$ denotes the supremum norm over $U$. If $m\in\N_0$ is chosen such that $K\subseteq X_m$ it follows with
		$$\rho_K:\mathscr{A}(X_m)\rightarrow\mathscr{H}(K),f\mapsto f_{|K}$$
		that 
		$$V_m:=\rho_K^{-1}\big(\{g\in\mathscr{H}(K);\,g\in\mathscr{H}^\infty(U),\|g\|_{\infty,U}<\delta\}\big)$$
		is a zero neighborhood in $\mathscr{A}(X_m)$. Clearly,
		\begin{align*}
			V_m\cap r_X^{X_m}(\mathscr{A}(X))&=r_X^{X_m}\Big(r_K^{-1}\big(\{g\in\mathscr{H}(K);\,g\in\mathscr{H}^\infty(U),\|g\|_{\infty,U}<\delta\}\big)\Big)\\
			&\subseteq r_X^{X_m}(V).	
		\end{align*}
		If we set for $k\in\N_0$
		$$\pi_k:\mbox{proj}_{\leftarrow n}(\mathscr{A}(X_n),r_{X_{n+1}}^{X_n})\rightarrow \mathscr{A}(X_k), (f_n)_{n\in\N_0}\mapsto f_k$$
		it follows
		$$i(V)=\pi_m^{-1}\big(r_X^{X_m}(V)\big)\supseteq\pi_m^{-1}\big(V_m\cap r_X^{X_m}(\mathscr{A}(X))\big)=\pi_m^{-1}(V_m)$$
		so that $i(V)$ is a zero neighborhood in $\mbox{proj}_{\leftarrow n}(\mathscr{A}(X_n),r_{X_{n+1}}^{X_n})$ which proves that $i$ is open. Thus, the sheaf $\mathscr{A}$ on $\R^d$ satisfies $(\mathscr{F}1)$. Moreover, $(\mathscr{F}2)$ as well as $(\mathscr{F}3)$ are obviously satisfied, too.
		
		\item[v)] For $\Omega=\R^d$ and a complex coefficient polynomial in $d$ variables $P$, i.e.\ $P\in\C[X_1,\ldots,X_d]$ we define for an open $X\subseteq\R^d$
		$$C^\infty_P(X):=\{f\in C^\infty(X);\, P(\partial)f=0 \mbox{ in }X\},$$
		where as usual for $P(\xi)=\sum_{|\alpha|\leq m}a_\alpha\xi^\alpha$ we set
		$$\forall\,f\in C^\infty(X), x\in X:\,P(\partial)f(x)=\sum_{|\alpha|\leq m}a_\alpha\partial^\alpha f(x).$$
		Obviously, $C_P^\infty(X)$ is a subspace of $C^\infty(X)$ and since $P(\partial)$ is a continuous linear operator on the separable Fr\'echet space $C^\infty(X)$ it follows that $C_P^\infty(X)$ is a closed subspace of $C^\infty(X)$ thus a separable Fr\'echet space itself when equipped with the relative topology of $C^\infty(X)$. It is easily seen that $C_P^\infty$ is a sheaf on $\R^d$ which satisfies $(\mathscr{F}1)$. In order to see that $(\mathscr{F}2)$ holds as well, let $\zeta\in\C^d$ satisfy $P(\zeta)=0$ - we exclude the rather boring case of a constant polynomial $P$. Then
		$$e_\zeta:\R^d\rightarrow\C, e_\zeta(x)=\exp(\sum_{j=1}^d \zeta_j x_j)$$
		belongs to $C_P^\infty(\R^d)$ which shows that $(\mathscr{F}2)$ is indeed satisfied.
		
		However, $(\mathscr{F}3)$ need not be satisfied for general (non-constant) $P$ as is seen by taking $d=2$ and $P(\xi_1,\xi_2)=\xi_1$. Then $C_P^\infty(X)$ consists obviously of functions which are independ on $x_1$ and there is no $f\in C_P^\infty(\R^2)$ with $f(0,0)=1$ and $f(1,0)=0$. However, by Proposition \ref{f3 and f4 for kernel} $(\mathscr{F}3)$ is in particular satisfied by $C_P^\infty$ whenever $P$ is (hypo)elliptic.
		
		Considering the special cases of $P(\partial)$ being the Cauchy-Riemann operator, resp.\ the Laplace operator, gives as $C_P^\infty$ the sheaf of holomorphic functions $\mathscr{H}$ on open subsets of $\C$, resp.\ the sheaf of harmonic functions on open subsets of $\R^d$.
	\end{itemize}
\end{example}

In most of the above examples the considered sheaves are sheaves of $C^1$-functions over open subsets of euclidean space. For these kind of sheaves we introduce yet another property.

\begin{definition}
Let $\Omega=\R^d$ and let $\mathscr{F}$ be a sheaf satisfying $(\mathscr{F}1)$. Then we define:
\begin{itemize}
	\item[$(\mathscr{F}4)$] For every open $X\subseteq\R^d$ we have $\mathscr{F}(X)\subseteq C^1(X)$ and for every $x\in X$, $1\leq j\leq d$ the distribution of order one
	$$-\partial_j\delta_x:\mathscr{F}(X)\rightarrow\K,f\mapsto \partial_j f(x)$$
	is continuous. Moreover, for each $h\in\R^d\backslash\{0\},\lambda\in\K$, and $x\in X$ the kernel of the continuous linear functional
	$$\mathscr{F}(X)\rightarrow\K,f\mapsto\sum_{j=1}^dh_j\partial_j f(x)-\lambda f(x)$$
	is not all of $\mathscr{F}(X)$. 
\end{itemize}
\end{definition}

Property $(\mathscr{F}4)$ implies that for every direction $h\in\R^d\backslash\{0\}$ and any $x\in X$ the directional derivative in direction $h$ evaluated at $x$ does not coincide with a multiple of $\delta_x$. Clearly, $(\mathscr{F}4)$ is satisfied by Examples \ref{examples of sheaves} ii) for $r\geq 1$, iii), and iv). For hypoelliptic polynomials $P$ the sheaf $C_P^\infty$ satisfies $(\mathscr{F}4)$ by Proposition \ref{f3 and f4 for kernel}.\\

\textbf{General assumption.}
Let $\mathscr{F}$ be a sheaf on $\Omega$ satisfying $(\mathscr{F}1)$, $X\subseteq \Omega$ open, and let $w:X\rightarrow\K$ as well as $\psi:X\rightarrow X$ be continuous. $w$ will be called a \textit{weight} and $\psi$ a \textit{symbol}. We assume that the weighted composition operator
$$C_{w,\psi}:\mathscr{F}(X)\rightarrow\mathscr{F}(X), f\mapsto w\cdot(f\circ\psi)$$
is well-defined. For every $x\in X$ we have by hypothesis that $\delta_x\in\mathscr{F}(X)'$ and it follows easily from the Hahn-Banach Theorem, that the linear span of $\{\delta_x;\,x\in X\}$ is weak*-dense in $\mathscr{F}(X)'$. Since $\mathscr{F}(X)$ is Hausdorff, it follows herefrom that $C_{w,\psi}$ has closed graph. By $(\mathscr{F}1)$ we conclude from De Wilde's Closed Graph Theorem \cite[Theorem 24.31]{MeiseVogt} that $C_{w,\psi}$ is continuous. 

\section{Dynamical properties of weighted composition operators}\label{dynamical properties}

In this section we give necessary and sufficient conditions for a weighted composition operator defined on a local space of functions to be topologically transitive and topologically (weakly) mixing, respectively. We will see that in many concrete cases these necessary and sufficient conditions coincide thus they yield a characterization of said properties.

\begin{definition}
Let $E$ be a locally convex space and $T$ a continuous linear operator on $E$.
\begin{itemize}
	\item[i)] $T$ is called \emph{(topologically) transitive} if for any pair of non-empty, open subsets $U,V$ of $E$ there is $m\in\N$ such that $T^m(U)\cap V\neq\emptyset$.
	\item[ii)] $T$ is called \emph{(topologically) weakly mixing} if $T\oplus T$ is transitive on $E\oplus E$, i.e.\ if for every choice of non-empty, open subsets $U_j,V_j$ of $E, j=1,2,$ there is $m\in\N$ such that $T^m(U_j)\cap V_j\neq\emptyset$.
	\item[iii)] $T$ is called \emph{(topologically) mixing} if for every pair $U,V$ of non-empty, open subsets of $E$ there is $M\in\N$ such that $T^m(U)\cap V\neq\emptyset$ for every $m\geq M$.
	\item[iv)] $T$ is called \emph{hypercyclic} if there is $x\in E$ whose orbit under $T$, i.e.\ $\mbox{orb}(T,x):=\{T^mx;\,m\in\N_0\}$ is dense in $E$.
\end{itemize}
\end{definition}

\begin{remark}
\begin{itemize}
	\item[i)] Clearly, $E$ has to be separable in order to support a hypercyclic operator. By Birkhoff's Transitivity Criterion \cite[Theorem 2.19]{Grosse-ErdmannPeris} a continuous linear operator on a separable Fr\'echet space is transitive if and only if it is hypercyclic.
	\item[ii)] Obviously, every mixing operator is weakly mixing and every weakly mixing operator is transitive. In general, the reverse implications are not true. While it is not too complicated to give an example of a weakly mixing operator which is not mixing (see e.g.\ \cite[Remark 4.10]{Grosse-ErdmannPeris}) it is highly intricate to construct a hypercyclic operator (on a Banach space) which fails to be weakly mixing. Such an operator was constructed by De la Rosa and Read \cite{DelaRosaRead} (see also \cite{BayartMatheron}) who thereby solved a problem posed by Herrero \cite{Herrero} which remained open for more than fifteen years.
	\item[iii)] It follows immediately from the definition that every transitive operator has dense image.
\end{itemize}
\end{remark}

We first give necessary conditions for a weighted composition operator defined on a local space of functions to be transitive. This result is inspired by \cite[Lemma 3.1]{Przestacki}. Before we state the condition we recall a definition for symbols.

\begin{definition}\label{run-away}
	Let $X\subseteq\Omega$ be open and $\psi:X\rightarrow X$ be continuous.
	\begin{itemize}
		\item[i)] $\psi$ is called \emph{run-away} if for each compact subset $K$ of $X$ there is $m\in\N$ such that $\psi^m(K)\cap K=\emptyset$, where $\psi^m$ denotes the $m$-fold iterate of $\psi$, i.e.\ $\psi^m=\psi\circ\ldots\circ\psi$ with $m$ composition factors.
		\item[ii)] $\psi$ is called \emph{strong run-away} if for each compact subset $K$ of $X$ there is $M\in\N$ such that $\psi^m(K)\cap K=\emptyset$ whenever $m\geq M$.
	\end{itemize}
\end{definition}

Clearly, strong run-away implies run-away. The converse is true for continuous and injective $\psi:\R\rightarrow\R$, as has been shown in \cite[Lemma 4.1]{Przestacki}. Moreover, for a holomorphic $\psi:X\rightarrow X$ on a simply connected domain $X\subsetneq \C$ run-away and strong run-away are the same, due to the Riemann Mapping Theorem combined with the Denjoy-Wolff Iteration Theorem (see e.g.\ \cite[Chapter 5]{Shapiro}) and in fact these properties are equivalent to $\psi$ having no fixed point. To the author's best knowledge, there is no example of a continuous $\psi:X\rightarrow X$ on an open subset $X$ of a  locally compact, $\sigma$-compact, non-compact Hausdorff space $\Omega$ which is run-away but not strong run-away. 

\begin{proposition}\label{necessary transitive}
	Let $\mathscr{F}$ be a sheaf on $\Omega$ satisfying $(\mathscr{F}1)-(\mathscr{F}3)$. Let $X\subseteq\Omega$ be non-empty and open and assume that the weighted composition operator $C_{w,\psi}$ is transitive on $\mathscr{F}(X)$. Then the following hold.
	\begin{itemize}
		\item[i)] $\forall\,x\in X:\, w(x)\neq 0$.
		\item[ii)] $\psi$ is injective.
		\item[iii)] $\psi$ has no fixed points.
		\item[iv)] $\forall\,x\in X:\,\overline{\{\psi^m(x);\,m\in\N_0\}}$ is not a compact subset of $X$, where the closure is taken in $X$.
		\item[v)] $\psi$ is run-away under any of the following additional assumptions:
		\begin{itemize}
			\item[v-1)] $\forall\,x\in X:\,|w(x)|\leq 1$.
			\item[v-2)] $C_{w,\psi}$ is weakly mixing.
			\item[v-3)] $\mathscr{F}(\Omega)$ is dense in $C(\Omega)$ with respect to the compact-open topology.
		\end{itemize}
	\end{itemize}
	Additionally, if $\mathscr{F}$ satisfies $(\mathscr{F}4)$ then we also have
	\begin{itemize}
		\item[vi)] $\forall\,x\in X:\,\mbox{det}J\psi(x)\neq 0$, where $J\psi(x)$ denotes the Jacobian of $\psi$ in $x$.
	\end{itemize}
\end{proposition}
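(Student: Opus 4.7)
My plan is to exploit two general principles: a topologically transitive operator has dense image and admits no eigenvector of its adjoint. The basic identities I use throughout are
\[
C_{w,\psi}^m f(y) = W_m(y)\, f(\psi^m(y)), \qquad (C_{w,\psi}^*)^m \delta_y = W_m(y)\,\delta_{\psi^m(y)},
\]
with $W_m(y) := \prod_{j=0}^{m-1} w(\psi^j(y))$, together with the continuity on $\mathscr{F}(X)$ of the seminorm $\|\cdot\|_K$ for every compact $K\subseteq X$ (Remark~\ref{properties of local function sheaves}~ii)). Claims (i)--(iii) then follow by showing that their failure places the image of $C_{w,\psi}$ in a proper closed subspace of $\mathscr{F}(X)$ or produces a non-zero adjoint eigenvector: $w(x_0)=0$ forces the image into $\ker\delta_{x_0}$, proper by $(\mathscr{F}3)$; $\psi(x_1)=\psi(x_2)$ with $x_1\neq x_2$ forces it into $\ker(w(x_2)\delta_{x_1}-w(x_1)\delta_{x_2})$, proper thanks to (i) and $(\mathscr{F}3)$; and $\psi(x_0)=x_0$ yields $C_{w,\psi}^*\delta_{x_0}=w(x_0)\delta_{x_0}$, ruled out by the standard open-slab argument at $x_0$.

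For (iv), I would assume $K:=\overline{\{\psi^m(x):m\in\N_0\}}$ (closure in $X$) is compact, so that $\psi(K)\subseteq K$ and thus $\psi^m(K)\subseteq K$ for every $m$. Fix $f_K\in\mathscr{F}(\Omega)$ with $|f_K|\geq c>0$ on $K$ using $(\mathscr{F}2)$, and consider the open sets $U_\epsilon=\{f:\|f\|_K<\epsilon\}$ and $V=\{f:\|f-f_K\|_K<c/3\}$. A transitivity witness $g=C_{w,\psi}^m f$ for the pair $(U_\epsilon,V)$ satisfies, at every $y\in K$, $|g(y)|\geq 2c/3$ and $|g(y)|\leq|W_m(y)|\epsilon$, forcing $|W_m(y)|>2c/(3\epsilon)$ uniformly on $K$; the swapped pair $(V,U_\epsilon)$ forces $|W_{m'}(y)|<3\epsilon/(2c)$ on $K$ for a possibly different $m'$. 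The step I expect to be the main obstacle is that plain transitivity, unlike weak mixing, need not deliver a common index; I would overcome this by refining the two test pairs to couple the constraints (exploiting $\psi$-invariance of $K$), adapting Przestacki's Lemma 3.1 to the sheaf setting.

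For (v), each sharpening makes it possible to force both bounds at a common $m$, and hence to contradict the failure of run-away directly. Under (v-1), $|w|\leq 1$ globally implies $|W_m|\leq 1$, which at any point $y_m\in K$ with $\psi^m(y_m)\in K$ (negation of run-away) clashes with the lower bound $|W_m(y_m)|>2c/(3\epsilon)$ once $\epsilon<2c/3$. Under (v-2), weak mixing applied to the pairs $(U_\epsilon,V)$ and $(V,U_\epsilon)$ on $\mathscr{F}(X)\oplus\mathscr{F}(X)$ furnishes a common $m$ for which both $|W_m|$-bounds on $K$ hold, contradicting each other for small $\epsilon$. Under (v-3), density of $\mathscr{F}(\Omega)$ in $C(\Omega)$ allows one to approximate by an element of $\mathscr{F}(\Omega)$ a continuous function that is close to $f_K$ on $K$ but vanishes at the return point $\psi^m(y_m)$, producing the clash from a single transitivity statement.

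Finally, for (vi), suppose $J\psi(x_0)h=0$ for some $h\in\R^d\setminus\{0\}$. The chain rule annihilates the term $w(x_0)\,Df(\psi(x_0))\,J\psi(x_0)h$, leaving
\[
\sum_{j=1}^d h_j\,\partial_j(C_{w,\psi}f)(x_0) = (\partial_h w(x_0))\,f(\psi(x_0)) = \mu_0\,(C_{w,\psi}f)(x_0),
\]
with $\mu_0:=\partial_h w(x_0)/w(x_0)$ well-defined by (i). The functional $L(g):=\sum_{j=1}^d h_j\,\partial_j g(x_0)-\mu_0\, g(x_0)$ is continuous on $\mathscr{F}(X)$ by $(\mathscr{F}4)$, vanishes on the dense image of $C_{w,\psi}$, and hence on all of $\mathscr{F}(X)$; this contradicts the last clause of $(\mathscr{F}4)$, so $J\psi(x_0)$ must be injective and $\det J\psi(x_0)\neq 0$.
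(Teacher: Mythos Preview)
Your arguments for (i), (ii), (iii), (v-1), (v-2), and (vi) are correct and essentially match the paper's.

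For (iv), however, your plan has a genuine gap that you yourself flag but do not close. Obtaining a lower bound on $|W_m|$ over $K$ from one pair and an upper bound from another pair at an unrelated index $m'$ does not yield a contradiction, and ``refining the two test pairs'' is not an argument. The paper avoids this entirely by a telescoping trick: for $f$ in the open set $U=\{g:\alpha/2<|g|<2\beta\text{ on }K\}$ one has, for \emph{every} $m$,
\[
\frac{C_{w,\psi}^m f(\psi(x_0))}{C_{w,\psi}^m f(x_0)}=\frac{w(\psi^m(x_0))}{w(x_0)}\cdot\frac{f(\psi^{m+1}(x_0))}{f(\psi^m(x_0))},
\]
and the right-hand side is bounded by the fixed constant $4b\beta/(a\alpha)$ because the long products $W_m$ cancel. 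The target set $V=\{g:|g(\psi(x_0))|>(4b\beta/(a\alpha))|g(x_0)|\}$ is nonempty by (iii) and $(\mathscr{F}3)$, and $C_{w,\psi}^m(U)\cap V=\emptyset$ for all $m$. This is the idea you are missing.

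Your sketch for (v-3) is also flawed, and not just incomplete. You propose to approximate in $\mathscr{F}(\Omega)$ a continuous function that is close to $f_K$ on $K$ but vanishes at the return point $\psi^m(y_m)$; but $\psi^m(y_m)\in K$ by the very negation of run-away, so no such continuous function exists. Moreover the target set must be chosen before transitivity hands you $m$. The paper instead replays the ratio trick from (iv), now uniformly for $x\in K$: the target is $V_3=\{g:|g(\psi(x))|>(4b\tilde\beta/(a\tilde\alpha))|g(x)|\text{ for all }x\in K\}$, which is open in $C(X)$. The substantive work is showing $V_3\neq\emptyset$: one builds
\[
f=\sum_{m=0}^\infty\Big(\frac{a\alpha}{4b\beta+a\alpha}\Big)^m h\circ\psi^m
\]
with $h\in C(X)$, $h|_K=0$, $h|_{X\setminus Y}=1$ for a relatively compact neighborhood $Y$ of $K$, checks $f\in V_3$ (here (iv) is used to guarantee $f>0$), and only then invokes density of $\mathscr{F}(\Omega)$ in $C(\Omega)$ to get $V_3\cap\mathscr{F}(X)\neq\emptyset$.
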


\begin{proof}
	In order to prove i), assume that $w(x_0)=0$ for some $x_0\in X$. Then the image of $C_{w,\psi}$ is contained in the kernel of $\delta_{x_0}$ which is a closed subspace of $\mathscr{F}(X)$ due to $(\mathscr{F}1)$. By $(\mathscr{F}3)$ this closed subspace is proper, in particular nowhere dense. Hence, the image of $C_{w,\psi}$ is not dense in $\mathscr{F}(X)$ contradicting that $C_{w,\psi}$ is transitive, so that i) follows.
	
	Next, we assume that $\psi(x)=\psi(y)$ for $x,y\in X, x\neq y$. Because $w(y)\neq 0$ by i) it follows
	$$\mbox{im }C_{w,\psi}\subseteq\mbox{kern }(\delta_x-\frac{w(x)}{w(y)}\delta_y).$$
	Since $w(x)\neq 0$ by i) it follows from $(\mathscr{F}1)$ and $(\mathscr{F}3)$ that $\mbox{kern }(\delta_x-\frac{w(x)}{w(y)}\delta_y)$ is a closed, proper subspace of $\mathscr{F}(X)$. Since the image of $C_{w,\psi}$ is dense in $\mathscr{F}(X)$ we obtain a contradiction as in the proof of i), so that ii) follows.
	
	To prove iii) we denote for $\alpha\in\K$ and $r>0$ the open ball about $\alpha$ with radius $r$ as $B(\alpha,r)$ and the corresponding closed ball by $B[\alpha,r]$. Assume there is $x_0\in X$ with $\psi(x_0)=x_0$. In case of $|w(x_0)|\leq 1$ we have
	$$\forall\,f\in\delta_{x_0}^{-1}(B(0,1)), n\in\N_0:\,|C^n_{w,\psi}(f)(x_0)|\leq 1$$
	and thus
	\begin{equation}\label{contradiction1}
	\forall\,n\in\N_0:C_{w,\psi}^n(\delta_{x_0}^{-1}(B(0,1)))\cap \delta_{x_0}^{-1}(B(2,1))=\emptyset.
	\end{equation}
	But since $\mbox{kern }\delta_{x_0}\neq\mathscr{F}(X)$ by $(\mathscr{F}3)$ we have $\mbox{im }\delta_{x_0}=\K$ so $\delta_{x_0}^{-1}(B(2,1))$ is a non-empty, open (by $(\mathscr{F}1)$) subset of $\mathscr{F}(X)$, as is $\delta_{x_0}^{-1}(B(0,1))$. Hence, (\ref{contradiction1}) contradicts the transitivity of $C_{w,\psi}$.
	In case of $|w(x_0)|>1$ we have
	$$\forall\,f\in\delta_{x_0}^{-1}(\K\backslash B[0,1]), n\in\N:\,|C^n_{w,\psi}(f)(x_0)|>1$$
	implying
	$$\forall\,n\in\N_0:\,C_{w,\psi}^n(\delta_{x_0}^{-1}(\K\backslash B[0,1]))\cap\delta_{x_0}^{-1}(B(0,1))=\emptyset$$
	which again contradicts the transitivity of $C_{w,\psi}$. Thus, iii) is proved.
	
	In order to prove iv), assume that for some $x_0\in X$ the set
	$$K:= \overline{\{\psi^m(x_0);\,m\in\N_0\}}$$
	is compact. By i), there are $a,b>0$ such that
	$$\forall\,x\in K:\,a\leq |w(x)|\leq b.$$
	Fix $f_K$ according to $(\mathscr{F}2)$ and set
	$$\alpha:=\inf_{x\in K}|f_K(x)|\mbox{ and }\beta:=\sup_{x\in K}|f_K(x)|.$$
	Then $\alpha>0$ and $\beta<\infty$ and the set
	$$U:=\{f\in\mathscr{F}(X);\,\forall\,x\in K:\,\frac{\alpha}{2}<|f(x)|<2\beta\}$$
	obviously contains $f_K$ and is open with respect to the compact-open topology. Thus, $U$ is an open neighborhood of $f_K$ in $\mathscr{F}(X)$. A straightforward calculation gives
	\begin{equation}\label{inequality1}
	\forall\,f\in U, m\in\N_0:|\frac{C_{w,\psi}^m(f)(\psi(x_0))}{C_{w,\psi}^m(f)(x_0)}|\leq\frac{4b\beta}{a\alpha}.
	\end{equation}
	By $(\mathscr{F}1)$ it follows that
	$$V:=\{f\in\mathscr{F}(X);\,|\delta_{\psi(x_0)}(f)|>\frac{4b\beta}{a\alpha}|\delta_{x_0}(f)|\}$$
	is an open subset of $\mathscr{F}(X)$ and since $\psi(x_0)\neq x_0$ by iii) it follows from $(\mathscr{F}3)$ that $V\neq \emptyset$. From (\ref{inequality1}) we obtain
	$$\forall\,m\in\N_0:\,C_{w,\psi}^m(U)\cap V=\emptyset$$
	which contradicts the transitivity of $C_{w,\psi}$. Thus, iv) is proved.
	
	We continue with the proof of v) and argue again by contradiction. Assume there is a compact subset $K$ of $X$ with
	$$\forall\,m\in \N\,\exists\,x_m\in K:\,\psi^m(x_m)\in K.$$
	Due to $(\mathscr{F}2)$ there is $f_K\in\mathscr{F}(X)$ with $f_K(x)\neq 0$ for every $x\in K$. We set
	$$\alpha:=\inf_{y\in K}|f_K(y)|>0, \beta:=\sup_{y\in K}|f_K(y)|<\infty.$$
	
	We first assume that additionally $|w(x)|\leq 1$ holds for every $x\in X$. We define
	$$U_1:=\{g\in\mathscr{F}(X);\,\forall\,x\in K:\,|g(x)|<\frac{\alpha}{2}\}$$
	and
	$$V_1:=\{g\in\mathscr{F}(X);\,\forall\,x\in K:\,|f_K(x)-g(x)|<\frac{\alpha}{2}\}$$
	which are open with respect to the compact-open topology and therefore, by $(\mathscr{F}1)$, open subsets of $\mathscr{F}(X)$. Obviously, $f_K\in V_1$ and $0\in U_1$. For every $m\in\N$ and each $g\in U_1$ it follows from $|w|\leq 1$ and $x_m,\psi^m(x_m)\in K$
	\begin{align*}
		\max_{x\in K}|f_K(x)-C_{w,\psi}^m(g)(x)|&\geq |f_K(x_m)-\prod_{j=0}^{m-1}w(\psi^j(x_m))g(\psi^m(x_m))|\\
		&\geq \alpha-|g(\psi^m(x_m))|>\frac{\alpha}{2}
	\end{align*} 
	so that $C_{w,\psi}^m(U_1)\cap V_1=\emptyset$ which gives the desired contradiction to the transitivity of $C_{w,\psi}$.
	
	Next we assume that $C_{w,\psi}$ is even weakly mixing. We define
	$$U_2:=\{g\in\mathscr{F}(X);\,\forall\,x\in K:\,\frac{\alpha}{2}<|g(x)|<2\beta\}$$
	and
	$$V_2:=\{g\in\mathscr{F}(X);\,\sup_{x\in K}|g(x)|<\frac{\alpha^2}{8\beta}\}$$
	which are open with respect to the compact-open topology and thus open subsets of $\mathscr{F}(X)$ by $(\mathscr{F}1)$. Because $0\in V_2$ and $f_K\in U_2$ and since $C_{w,\psi}$ is weakly mixing there is $m\in\N$ such that
	$$C_{w,\psi}^m(U_2)\cap U_2\neq \emptyset\mbox{ and }C_{w,\psi}^m(U_2)\cap V_2\neq\emptyset.$$
	Pick $f\in U_2$ with $C_{w,\psi}^m(f)\in U_2$. By $\psi^m(x_m)\in K$ we have $|f(\psi^m(x_m))|<2\beta$ so that because of $x_m\in K$
	\begin{align}\label{repairing inequality 1}
		\frac{\alpha}{2}&<|C_{w,\psi}^m(f)(x_m)|=|\prod_{j=0}^{m-1}w(\psi^j(x_m))f(\psi^m(x_m))|\\
		&\leq |\prod_{j=0}^{m-1}w(\psi^j(x_m))|2\beta.\nonumber
	\end{align}
	On the other hand, with $g\in U_2$ such that $C_{w,\psi}^m(g)\in V_2$ it follows from $\psi^m(x_m)\in K$ and thus $|g(\psi^m(x_m))|>\alpha/2$ with $x_m\in K$
	\begin{align*}
		\frac{\alpha^2}{8\beta}>|C_{w,\psi}^mg(x_m)|=|\prod_{j=0}^{m-1}w(\psi^j(x_m))g(\psi^m(x_m))|>\frac{\alpha}{2}|\prod_{j=0}^{m-1}w(\psi^j(x_m))|
	\end{align*}
	which contradicts (\ref{repairing inequality 1}).
	
	In order to finish the proof of v), we next assume that $\mathscr{F}(\Omega)$ is dense in $C(\Omega)$. Because of i) there are $a,b>0$ such that
	$$\forall\,x\in K:\,a\leq|w(x)|\leq b.$$
	By $(\mathscr{F}2)$ there is $\tilde{f}_K\in\mathscr{F}(X)$ such that $\tilde{f}_K(x)\neq 0$ for every $x\in K\cup\psi(K)$ so that
	$$\tilde{\alpha}:=\inf_{x\in K\cup\psi(K)}|\tilde{f}_K(x)|>0\mbox{ and }\tilde{\beta}:=\sup_{x\in K\cup\psi(K)}|\tilde{f}_K(x)|<\infty.$$
	We define
	$$U_3:=\{g\in\mathscr{F}(X);\,\forall\,x\in K\cup\psi(K):\,\frac{\tilde{\alpha}}{2}<|g(x)|<2\tilde{\beta}\}$$
	which is an open neighborhood of $\tilde{f}_K$ in $\mathscr{F}(X)$ satisfying
	$$\forall\,g\in U_3, m\in \N_0:\,|\frac{C_{w,\psi}^m(g)(\psi(x_m))}{C_{w,\psi}^m(g)(x_m)}|\leq\frac{4b\tilde{\beta}}{a\tilde{\alpha}},$$
	that is
	\begin{equation}\label{contradiction2-1}
	\forall\,g\in U_3, m\in \N_0:\,|\delta_{\psi(x_m)}\big(C_{w,\psi}^m(g)\big)|\leq\frac{4b\tilde{\beta}}{a\tilde{\alpha}}|\delta_{x_m}\big(C_{w,\psi}^m(g)\big)|.
	\end{equation}
	In particular
	$$\forall\,m\in\N_0:\,C_{w,\psi}^m(U_3)\cap (V_3\cap\mathscr{F}(X))=\emptyset,$$
	where
	$$V_3:=\{g\in C(X);\,\forall\,x\in K:\,|\delta_{\psi(x)}(g)|>\frac{4b\tilde{\beta}}{a\tilde{\alpha}}|\delta_x(g)|\}.$$
	Once we have shown that $V_3\cap\mathscr{F}(X)$ is a non-empty open subset of $\mathscr{F}(X)$ this will yield the desired contradiction to the transitivity of $C_{w,\psi}$.
	
	A straightforward calculation shows that for $g,h\in C(X)$ and $x\in K$ we have
	$$|\delta_{\psi(x)}(h)|-\frac{4b\tilde{\beta}}{a\tilde{\alpha}}|\delta_x(h)|\geq|\delta_{\psi(x)}(g)|-\frac{4b\tilde{\beta}}{a\tilde{\alpha}}|\delta_x(g)|-\frac{8b\tilde{\beta}}{a\tilde{\alpha}}\sup_{y\in K\cup\psi(K)}|g(y)-h(y)|$$
	which shows that $V_3$ is an open subset of $C(X)$ with respect to the compact-open topology, thus $V_3\cap \mathscr{F}(X)$ is open in $\mathscr{F}(X)$ by $(\mathscr{F}1)$. Since $\mathscr{F}(\Omega)$ is dense in $C(\Omega)$ and $\{f_{|X};f\in C(\Omega)\}$ is dense in $C(X)$ by an application of Urysohn's Lemma \cite[Theorem 1.5.11]{Engelking}, it is enough to show that $V_3$ is not empty in order to prove $V_3\cap \mathscr{F}(X)\neq\emptyset$.
	
	In order to show that $V_3\neq\emptyset$ we use a clever construction from \cite[Lemma 3.2]{Przestacki}. Let $Y\subseteq X$ be an open, relatively compact neighborhood of $K$. By Urysohn's Lemma there is $h\in C(X)$ such that $h_{|K}=0$, $h_{|X\backslash Y}=1$ and $0\leq h\leq 1$. The series
	$$f:=\sum_{m=0}^\infty\big(\frac{a\alpha}{4b\beta+a\alpha}\big)^m h\circ\psi^m$$
	converges uniformly on $X$, in particular $f\in C(X)$ and because of iv) it follows that $f(x)>0$ for all $x\in X$. Moreover, for each $x\in K$ we have
	$$|\delta_{\psi(x)}(f)|=f(\psi(x))=\frac{4b\beta+a\alpha}{a\alpha}f(x)>\frac{4b\beta}{a\alpha}|\delta_x(f)|,$$
	i.e.\ $f\in V_3$. Thus, v) is proved.
	
	Finally, let $\mathscr{F}$ satisfy $(\mathscr{F}4)$ in addition to $(\mathscr{F}1)-(\mathscr{F}3)$. Assuming the existence of $x_0\in X$ with $\mbox{det}J\psi(x_0)=0$ there is $h\in\R^d\backslash\{0\}$ in the kernel of $J\psi(x_0)$. By an easy calculation we have
	$$\forall\,f\in\mathscr{F}(X):\,\langle\nabla C_{w,\psi}(f)(x_0), h\rangle=C_{w,\psi}(f)(x_0)\frac{1}{w(x_0)}\langle\nabla w(x_0),h\rangle,$$
	showing
	$$\mbox{im }C_{w,\psi}\subseteq\mbox{kern }\big(f\mapsto\langle\nabla f(x_0),h\rangle-\frac{1}{w(x_0)}\langle\nabla w(x_0),h\rangle\delta_{x_0}(f)\big).$$
	By $(\mathscr{F}4)$ the previously mentioned kernel is a closed, proper subspace of $\mathscr{F}(X)$ which contradicts that the image of $C_{w,\psi}$ is dense in $C_{w,\psi}$.
\end{proof}

\begin{definition}\label{local action}
	Let $\mathscr{F}$ be a sheaf on $\Omega$ satisfying $(\mathscr{F}1)$ and $X\subseteq\Omega$ be open such that the weighted composition operator $C_{w,\psi}$ is well-defined on $\mathscr{F}(X)$. $C_{w,\psi}$ is said to \emph{act locally on} $\mathscr{F}(X)$ if for every open subset $Y$ of $X$
	$$C_{w,\psi,Y}:\mathscr{F}(Y)\rightarrow\mathscr{F}(\psi^{-1}(Y)), f\mapsto w\cdot(f\circ\psi),$$
	i.e.\ the weighted composition operator $C_{w,\psi}$ (formally) applied to functions defined only on $Y$, is well-defined.
\end{definition}

\begin{remark}
	Clearly, under hypothesis $(\mathscr{F}1)$, for every open subset $Y$ of $\Omega$ and any $f\in\mathscr{F}(Y)$ the function
	$$\psi^{-1}(Y)\rightarrow\K,y\mapsto w(y) f(\psi(y))$$
	is a well-defined continuous function.
	
	If $C_{w,\psi}$ operates locally on $\mathscr{F}(X)$ it follows from $(\mathscr{F}1)$ and De Wildes's Closed Graph Theorem that $C_{w,\psi,Y}$ is continuous from $\mathscr{F}(Y)$ to $\mathscr{F}(\psi^{-1}(Y))$ for any open subset $Y$ of $X$. It follows immediately from the definition of the restriction maps $r_X^Y$ etc.\ and $C_{w,\psi,Y}$ that
	$$C_{w,\psi,Y}\circ r_X^Y=r_X^{\psi^{-1}(Y)}\circ C_{w,\psi},$$
	or more generally
	\begin{equation}\label{commuting relation}
		\forall\,m\in\N:\,C_{w,\psi,(\psi^{m-1})^{-1}(Y)}\circ\ldots\circ C_{w,\psi,Y}\circ r_X^Y=r_X^{(\psi^m)^{-1}(Y)}\circ C_{w,\psi}^m
	\end{equation}
		for every open $Y\subseteq X$.
\end{remark}

\begin{proposition}\label{sufficiency of run-away}
Let $\mathscr{F}$ be a sheaf on $\Omega$ satisfying $(\mathscr{F}1)$, $X\subseteq\Omega$ be open such that $C_{w,\psi}$ acts locally on $\mathscr{F}(X)$. Let $(X_n)_{n\in\N}$ be a relatively compact, open exhaustion of $X$. Assume that the following conditions are satisfied
\begin{itemize}
	\item[a)] For every open, relatively compact subset $Y$ of $X$ and any $m\in\N_0$
	$$r_X^{(\psi^m)^{-1}(Y)}(\mathscr{F}(X))\subseteq\overline{(C_{w,\psi,(\psi^{m-1})^{-1}(Y)}\circ\ldots\circ C_{w,\psi,Y})(\mathscr{F}(Y))},$$
	where the closure is taken in $\mathscr{F}((\psi^m)^{-1}(Y))$.
	\item[b)] There are $m,n\in\N$ with the properties
	\begin{itemize}
		\item[b1)] $\psi^m(X_n)$ is an open subset of $X$.
		\item[b2)] $X_n\cap\psi^m(X_n)=\emptyset$.
		\item[b3)] The restriction map $r_X^{X_n\cup\psi^m(X_n)}$ has dense range.
	\end{itemize} 
\end{itemize}
Then, for every $f,g\in\mathscr{F}(X)$ and any absolutely convex zero neighborhood $U_n$ in $\mathscr{F}(X_n)$ we have
$$\emptyset\neq C_{w,\psi}^m\big(f+(r_X^{X_n})^{-1}(U_n)\big)\cap \big(g+(r_X^{X_n})^{-1}(U_n)\big).$$
\end{proposition}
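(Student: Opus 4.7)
The plan is to produce $h\in\mathscr{F}(X)$ that lies in $f+(r_X^{X_n})^{-1}(U_n)$ and whose image $C_{w,\psi}^m(h)$ lies in $g+(r_X^{X_n})^{-1}(U_n)$. Set $Y:=\psi^m(X_n)$. By b1) and b2), $Y$ is open in $X$ and disjoint from $X_n$, and continuity of $\psi^m$ together with relative compactness of $X_n$ gives $\overline{Y}\subseteq\psi^m(\overline{X_n})$, which is compact; hence $Y$ is relatively compact in $X$. By the very choice of $Y$, $X_n\subseteq(\psi^m)^{-1}(Y)$. Writing
$$T:=C_{w,\psi,(\psi^{m-1})^{-1}(Y)}\circ\cdots\circ C_{w,\psi,Y}:\mathscr{F}(Y)\to\mathscr{F}((\psi^m)^{-1}(Y)),$$
which is continuous because $C_{w,\psi}$ acts locally, the commuting relation (\ref{commuting relation}) yields $T(u|_Y)=r_X^{(\psi^m)^{-1}(Y)}(C_{w,\psi}^m u)$ for every $u\in\mathscr{F}(X)$; restricting further to $X_n$, this becomes $T(u|_Y)|_{X_n}=C_{w,\psi}^m(u)|_{X_n}$.

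In the first step I apply assumption a) with this $Y$: since $g\in\mathscr{F}(X)$, the element $r_X^{(\psi^m)^{-1}(Y)}(g)$ lies in the closure of $T(\mathscr{F}(Y))$ in $\mathscr{F}((\psi^m)^{-1}(Y))$, and composing with the continuous restriction to $X_n$ I can pick $\phi\in\mathscr{F}(Y)$ with $T(\phi)|_{X_n}-g|_{X_n}\in\tfrac12 U_n$. In the second step I glue: because $X_n\cap Y=\emptyset$ by b2), the sheaf gluing axiom produces a unique $F\in\mathscr{F}(X_n\cup Y)$ with $F|_{X_n}=f|_{X_n}$ and $F|_Y=\phi$. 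The set
$$W:=\{G\in\mathscr{F}(X_n\cup Y):\,G|_{X_n}-F|_{X_n}\in U_n\ \text{and}\ T(G|_Y-F|_Y)|_{X_n}\in\tfrac12 U_n\}$$
is a neighbourhood of $F$ in $\mathscr{F}(X_n\cup Y)$, as both defining conditions are preimages of neighbourhoods of $0$ in $\mathscr{F}(X_n)$ along continuous linear maps.

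In the third step I exploit b3): since $r_X^{X_n\cup Y}$ has dense range, there is $h\in\mathscr{F}(X)$ with $h|_{X_n\cup Y}\in W$. The first defining condition of $W$ then reads $h|_{X_n}-f|_{X_n}\in U_n$, which is exactly $h\in f+(r_X^{X_n})^{-1}(U_n)$. The second reads $T(h|_Y)|_{X_n}-T(\phi)|_{X_n}\in\tfrac12 U_n$; combining this with step~1 and the absolute convexity $\tfrac12 U_n+\tfrac12 U_n\subseteq U_n$ yields $C_{w,\psi}^m(h)|_{X_n}-g|_{X_n}\in U_n$, so $C_{w,\psi}^m(h)\in g+(r_X^{X_n})^{-1}(U_n)$. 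The main delicate point I foresee is the careful use of (\ref{commuting relation}) to identify $T(h|_Y)|_{X_n}$ with $C_{w,\psi}^m(h)|_{X_n}$; disjointness b2) is what makes the gluing trivial (no overlap condition to verify), while a) is precisely what converts the abstract closure assertion into a concrete $\phi$ ready to be glued with $f|_{X_n}$.
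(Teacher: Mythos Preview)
Your proof is correct and follows essentially the same approach as the paper: set $Y=\psi^m(X_n)$, use hypothesis a) to approximate $g$ on $X_n$ by $T(\phi)$ for some $\phi\in\mathscr{F}(Y)$, glue $f|_{X_n}$ and $\phi$ over the disjoint union $X_n\cup Y$, and then use b3) to approximate the glued function by a global $h$. The only inessential deviation is that the paper uses $\tfrac12 U_n$ rather than $U_n$ in the first defining condition of your $W$ (yielding the slightly sharper $h\in f+(r_X^{X_n})^{-1}(\tfrac12 U_n)$), and your phrase ``preimages \ldots\ along continuous linear maps'' should strictly read continuous affine maps, though this does not affect openness.
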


\begin{proof}
	Fix $f,g\in\mathscr{F}(X)$ and an absolutely convex zero neighborhood $U_n$ in $\mathscr{F}(X_n)$. By b1), $Y:=\psi^m(X_n)$ is an open, relatively compact subset of $X$ and $Z:=(\psi^m)^{-1}(Y)$ is an open subset of $X$ with $X_n\subseteq Z$ so that $(r_Z^{X_n})^{-1}(\frac{1}{2} U_n)$ is a zero neighborhood in $\mathscr{F}(Z)$. By hypothesis a), there is $\tilde{g}\in\mathscr{F}(Y)$ such that with
	$$T:=C_{w,\psi,(\psi^{m-1})^{-1}(Y)}\circ\ldots\circ C_{w,\psi,Y}$$
	(note that $T$ is continuous) we have
	$$T(\tilde{g})-r_X^Z(g)\in(r_Z^{X_n})^{-1}(\frac{1}{2} U_n).$$
	Because $X_n\cap\psi^m(X_n)=X_n\cap Y=\emptyset$ it follows from the gluing property of a sheaf that there is $f_n\in\mathscr{F}(X_n\cup Y)$ with
	$$r_{X_n\cup Y}^{X_n}(f_n)=r_X^{X_n}(f)\mbox{ and }r_{X_n\cup Y}^Y(f_n)=\tilde{g}.$$
	By hypothesis b3) there is $h\in\mathscr{F}(X)$ such that
	$$r_X^{X_n\cup Y}(h)-f_n\in (r_{X_n\cup Y}^{X_n})^{-1}(\frac{1}{2}U_n)\cap\big(r_Z^{X_n}\circ T\circ r_{X_n\cup Y}^Y\big)^{-1}(\frac{1}{2}U_n).$$
	Thus
	$$\frac{1}{2}U_n\ni r_{X_n\cup Y}^{X_n}\big(r_X^{X_n\cup Y}(h)-f_n\big)=r_X^{X_n}(h)-r_{X_n\cup Y}^{X_n}(f_n)=r_X^{X_n}(h)-r_X^{X_n}(f),$$
	so that
	$$h\in f+(r_X^{X_n})^{-1}(\frac{1}{2}U_n).$$
	We will show that $C_{w,\psi}^m(h)\in g+(r_X^{X_n})^{-1}(U_n)$. Indeed, using (\ref{commuting relation}) we have
	\begin{align*}
		(r_X^{X_n}\circ C_{w,\psi}^m)(h)-r_X^{X_n}(g)&=(r_Z^{X_n}\circ r_X^Z\circ C_{w,\psi}^m)(h)\\
		&-(r_Z^{X_n}\circ T)(\tilde{g})+(r_Z^{X_n}\circ T)(\tilde{g})-r_Z^{X_n}\circ r_X^Z(g)\\
		&=(r_Z^{X_n}\circ r_X^{(\psi^m)^{-1}(Y)}\circ C_{w,\psi}^m)(h)\\
		&-(r_Z^{X_n}\circ T\circ r_{X_n\cup Y}^Y)(f_n)+r_Z^{X_n}(T(\tilde{g})-r_X^Z(g))\\
		&=(r_Z^{X_n}\circ C_{w,\psi,(\psi^{m-1})^{-1}(Y)}\circ\ldots\circ C_{w,\psi,Y}\circ r_{X_n\cup Y}^Y\\
		&\circ r_X^{X_n\cup Y})(h)-(r_Z^{X_n}\circ T\circ r_{X_n\cup Y}^Y)(f_n)\\
		&+r_Z^{X_n}(T(\tilde{g})-r_X^Z(g))\\
		&=(r_Z^{X_n}\circ T\circ r_{X_n\cup Y}^Y)(r_X^{X_n\cup Y}(h)-f_n)\\
		&+r_Z^{X_n}(T(\tilde{g})-r_X^Z(g))\\
		&\in\frac{1}{2}U_n+\frac{1}{2}U_n\subseteq U_n,
	\end{align*}
	so that $C_{w,\psi}^m(h)\in g+(r_X^{X_n})^{-1}(U_n)$. Thus we have shown
	$$C_{w,\psi}^m\big(f+(r_X^{X_n})^{-1}(U_n)\big)\cap\big(g+(r_X^{X_n})^{-1}(U_n)\big)\neq\emptyset.$$
\end{proof}

That condition a) from the previous proposition is in particular satisfied if $C_{w,\psi}$ has dense range is the content of the next one.

\begin{proposition}\label{dense range}
	Let $\mathscr{F}$ be a sheaf on $\Omega$ satisfying $(\mathscr{F}1)$, $X\subseteq\Omega$ be open such that $C_{w,\psi}$ acts locally on $\mathscr{F}(X)$. Assume that $C_{w,\psi}$ has dense range. Then, for every open subset $Y$ of $X$ and any $m\in\N_0$
	$$r_X^{(\psi^m)^{-1}(Y)}(\mathscr{F}(X))\subseteq\overline{(C_{w,\psi,(\psi^{m-1})^{-1}(Y)}\circ\ldots\circ C_{w,\psi,Y})(\mathscr{F}(Y))},$$
	where the closure is taken in $\mathscr{F}((\psi^m)^{-1}(Y))$.
\end{proposition}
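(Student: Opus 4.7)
The strategy combines two ingredients: that the iterates $C_{w,\psi}^m$ inherit dense range from $C_{w,\psi}$, and that the intertwining relation (\ref{commuting relation}), together with continuity of the restriction maps, transports approximation from $\mathscr{F}(X)$ down to $\mathscr{F}((\psi^m)^{-1}(Y))$.

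First I would verify that $C_{w,\psi}^m$ has dense range for every $m \in \N_0$. The case $m = 0$ is trivial, and assuming inductively that $\overline{C_{w,\psi}^{m-1}(\mathscr{F}(X))} = \mathscr{F}(X)$, continuity of $C_{w,\psi}^{m-1}$ together with the standard inclusion $S(\overline{A}) \subseteq \overline{S(A)}$ for continuous $S$ yields
$$C_{w,\psi}^{m-1}(\mathscr{F}(X)) = C_{w,\psi}^{m-1}\bigl(\overline{C_{w,\psi}(\mathscr{F}(X))}\bigr) \subseteq \overline{C_{w,\psi}^m(\mathscr{F}(X))},$$
exploiting that $C_{w,\psi}$ has dense range. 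Passing to closures and invoking the inductive hypothesis gives $\mathscr{F}(X) \subseteq \overline{C_{w,\psi}^m(\mathscr{F}(X))}$.

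Next, fix $f \in \mathscr{F}(X)$ and an open neighborhood $U$ of $r_X^{(\psi^m)^{-1}(Y)}(f)$ in $\mathscr{F}((\psi^m)^{-1}(Y))$. By $(\mathscr{F}1)$ the restriction map $r_X^{(\psi^m)^{-1}(Y)}$ is continuous, so there is an open neighborhood $V$ of $f$ in $\mathscr{F}(X)$ whose image under this restriction lies in $U$; by the first step one can choose $g \in \mathscr{F}(X)$ with $C_{w,\psi}^m(g) \in V$. Setting $T := C_{w,\psi,(\psi^{m-1})^{-1}(Y)} \circ \cdots \circ C_{w,\psi,Y}$ and invoking the intertwining identity (\ref{commuting relation}) then yields
$$T\bigl(r_X^Y(g)\bigr) = r_X^{(\psi^m)^{-1}(Y)}\bigl(C_{w,\psi}^m(g)\bigr) \in U.$$
Since $r_X^Y(g) \in \mathscr{F}(Y)$, this places an element of $T(\mathscr{F}(Y))$ in every neighborhood of $r_X^{(\psi^m)^{-1}(Y)}(f)$, which is exactly the desired inclusion. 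I do not expect any real obstacle here, the only mildly non-routine ingredient being the dense-range iteration in the first step; everything else is a direct diagram chase through (\ref{commuting relation}).
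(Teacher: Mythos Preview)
Your proof is correct and follows essentially the same route as the paper: both arguments hinge on the fact that $C_{w,\psi}^m$ inherits dense range, continuity of the restriction map, and the intertwining relation (\ref{commuting relation}). The paper compresses everything into a single closure-algebra chain $r_X^{(\psi^m)^{-1}(Y)}(\mathscr{F}(X))=r_X^{(\psi^m)^{-1}(Y)}(\overline{C_{w,\psi}^m(\mathscr{F}(X))})\subseteq\overline{r_X^{(\psi^m)^{-1}(Y)}(C_{w,\psi}^m(\mathscr{F}(X)))}$ and then applies (\ref{commuting relation}), whereas you spell out the dense-range induction and the neighborhood argument explicitly; these are cosmetic differences only.
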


\begin{proof}
	Fix an open subset $Y$ of $X$ and $m\in\N$. From the hypothesis on the range of $C_{w,\psi}$, the continuity of the restriction map $r_X^{(\psi^m)^{-1}(Y)}$, and the commutativity relation (\ref{commuting relation}) it follows
	\begin{align*}
		r_X^{(\psi^m)^{-1}(Y)}(\mathscr{F}(X))&=r_X^{(\psi^m)^{-1}(Y)}(\overline{C_{w,\psi}^m(\mathscr{F}(X)})\subseteq\overline{r_X^{(\psi^m)^{-1}(Y)}(C_{w,\psi}^m(\mathscr{F}(X)))}\\
		&=\overline{(C_{w,\psi,(\psi^{m-1})^{-1}(Y)}\circ\ldots\circ C_{w,\psi,Y})(\mathscr{F}(Y))}.
	\end{align*}
\end{proof}

We now come to an almost characterization of weak mixing for weighted composition operators acting locally on $\mathscr{F}(X)$, where $\mathscr{F}$ is a sheaf of functions defined by local properties which satisfies $(\mathscr{F}1)-(\mathscr{F}3)$.

\begin{theorem}\label{almost characterization of transitivity}
Let $\mathscr{F}$ be a sheaf on $\Omega$ satisfying $(\mathscr{F}1)-(\mathscr{F}3)$, let $X\subseteq\Omega$ be open, and assume that the weighted composition operator $C_{w,\psi}$ acts locally on $\mathscr{F}(X)$. Then, among the following, $i)\Rightarrow ii)\Rightarrow iv)$. Additionally, if $\mathscr{F}(\Omega)$ is dense in $C(\Omega)$ with respect to the compact-open topology or if $|w(x)|\leq 1$ for all $x\in X$, it holds $i)\Rightarrow ii)\Rightarrow iii)\Rightarrow iv)$.
\begin{itemize}
	\item[i)] a) For any $m\in\N_0$ and every open, relatively compact $Y\subseteq \psi^m(X)$
	$$r_X^{(\psi^m)^{-1}(Y)}(\mathscr{F}(X))\subseteq\overline{\big(C_{w,\psi,(\psi^{m-1})^{-1}(Y)}\circ\ldots\circ C_{w,\psi,Y}\big)(\mathscr{F}(Y))},$$
	where the closure is taken in $\mathscr{F}\big((\psi^m)^{-1}(Y)\big)$.
	
	\noindent b) There is an open, relatively compact exhaustion $(X_n)_{n\in\N_0}$ of $X$ such that for every $n\in\N_0$ there is $m\in\N$ with
	\begin{itemize}
		\item[b1)] $\psi^m(X_n)$ is open.
		\item[b2)] $X_n\cap \psi^m(X_n)=\emptyset$.
		\item[b3)] The restriction map $r_X^{X_n\cup\psi^m(X_n)}$ has dense range.
	\end{itemize}
	\item[ii)] $C_{w,\psi}$ is weakly mixing on $\mathscr{F}(X)$.
	\item[iii)] $C_{w,\psi}$ is transitive on $\mathscr{F}(X)$.
	\item[iv)] a) from i) holds, $w$ has no zeros, $\psi$ is injective and run-away, and in case of $(\mathscr{F}4)$ with continuously differentiable $w$ and $\psi$, additionally $\mbox{det}J\psi(x)\neq 0$ for every $x\in X$.
\end{itemize}
\end{theorem}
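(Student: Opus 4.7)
The proof plan is to establish $i) \Rightarrow ii)$ directly via Proposition \ref{sufficiency of run-away}, observe that $ii) \Rightarrow iii)$ is trivial, and deduce $ii) \Rightarrow iv)$ (and, under the extra hypothesis, $iii) \Rightarrow iv)$) by combining Proposition \ref{necessary transitive} with Proposition \ref{dense range}.

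For $i) \Rightarrow ii)$, I would unfold the definition of weak mixing: given non-empty open sets $U_1,V_1,U_2,V_2 \subseteq \mathscr{F}(X)$, pick $f_j \in U_j$ and $g_j \in V_j$ for $j=1,2$. Since $\mathscr{F}(X)$ is topologically isomorphic to $\operatorname{proj}_{\leftarrow n}(\mathscr{F}(X_n), r_{X_{n+1}}^{X_n})$ by ($\mathscr{F}1$), a basis of neighborhoods at a point is of the form $\cdot + (r_X^{X_n})^{-1}(U_n)$ for $n$ large and $U_n$ an absolutely convex zero neighborhood in $\mathscr{F}(X_n)$. Thus I may fix a single index $n$ (and a single $U_n$, by intersecting the four neighborhoods) such that $f_j+(r_X^{X_n})^{-1}(U_n)\subseteq U_j$ and $g_j+(r_X^{X_n})^{-1}(U_n)\subseteq V_j$ for $j=1,2$. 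Crucially, by condition b) of $i)$, the integer $m$ furnished by b1)-b3) depends only on $n$, not on the functions or on $U_n$; hence applying Proposition \ref{sufficiency of run-away} twice, once to the pair $(f_1,g_1)$ and once to the pair $(f_2,g_2)$ with this common $m$ and $U_n$, yields $C_{w,\psi}^m(U_j)\cap V_j\neq\emptyset$ simultaneously for $j=1,2$.

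The implication $ii) \Rightarrow iii)$ is immediate since $C_{w,\psi}\oplus C_{w,\psi}$ transitive forces $C_{w,\psi}$ transitive. For $ii) \Rightarrow iv)$, weak mixing implies transitivity, so Proposition \ref{necessary transitive} gives $w(x) \neq 0$ for all $x$, $\psi$ injective, and if additionally $(\mathscr{F}4)$ holds with differentiable $w,\psi$, then $\det J\psi(x) \neq 0$. The run-away property of $\psi$ follows from Proposition \ref{necessary transitive} v-2) because weak mixing is assumed. For condition a) of i): transitivity implies dense range of $C_{w,\psi}$ (as noted in the remark after the definition of transitivity), and Proposition \ref{dense range} then delivers exactly the inclusion required by a). Under the extra hypothesis $|w|\leq 1$ or $\mathscr{F}(\Omega)$ dense in $C(\Omega)$, the same argument proves $iii) \Rightarrow iv)$, the only change being that the run-away property is now obtained from v-1) or v-3) of Proposition \ref{necessary transitive} instead of v-2).

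The main obstacle is the first implication: one must be sure to extract an iteration count $m$ that does not depend on the particular pairs of open sets, so as to produce the same $m$ for both summands in the definition of weak mixing. This is precisely accommodated by formulating b) as existence of $m$ attached to each $n$ in the exhaustion, together with the projective topology on $\mathscr{F}(X)$ that permits shrinking any neighborhood to one of the form $(r_X^{X_n})^{-1}(U_n)$ for arbitrarily large $n$. Everything else is a bookkeeping assembly of the previously proved Propositions \ref{necessary transitive}, \ref{sufficiency of run-away}, and \ref{dense range}.
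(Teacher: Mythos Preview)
Your proposal is correct and follows essentially the same route as the paper: the implication $i)\Rightarrow ii)$ uses the projective-limit description of $\mathscr{F}(X)$ to reduce to a single index $n$ and zero neighborhood $U_n$, then applies Proposition~\ref{sufficiency of run-away} with the common $m$ from b) to both pairs; the remaining implications are assembled from Propositions~\ref{necessary transitive} and~\ref{dense range} exactly as you describe.
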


\begin{remark}\label{considering the almost characterization}
	\hspace{2em}
	\begin{itemize}
		\item[i)] If iv) of the above theorem is valid, $\psi$ is in particular run-away so that b2) is satisfied for any open, relatively compact exhaustion $(X_n)_{n\in\N_0}$ of $X$ for suitable $m$.
		\item[ii)] In case of $\Omega=\R^d$, if $\psi$ is injective it follows from Brouwer's Invariance of Domain Theorem (see \cite[Corollary 19.9]{Bredon} for an even stronger result) that for any open subset $Y$ of $X$ and each $m\in\N_0$ the set $\psi^m(Y)$ is open. Thus, in case of $\Omega=\R^d$, the only obstruction against the equivalence of i), ii), and iv) in Theorem \ref{almost characterization of transitivity} is the existence of a particular open, relatively compact exhaustion $(X_n)_{n\in\N}$ which satisfies b3). In concrete situations, this obstruction is overcome by a suitable approximation result which - depending on the concrete sheaf of functions under consideration - can be trivial (e.g.\ in the case of continuous functions) or highly sophisticated (as in the case of holomorphic functions in several variables \cite{Zajac}).
		\item[iii)] Although looking possibly rather deterrering condition a) from item i) in Theorem \ref{almost characterization of transitivity} is in most concrete situations fulfilled for zero-free $w$ and injective as well as open (the latter is redundant for $\Omega=\R^d$ by Brouwer's Invariance of Domain Theorem) $\psi$ because of the following. Under the stated hypothesis on $w$ and $\psi$ for any open subset $Y$ of $\psi^m(X), m\in\N_0,$ and each $f\in\mathscr{F}((\psi^m)^{-1}(Y))$ the function
		$$\tilde{f}:Y\mapsto\K,y\mapsto\big(\frac{f}{\prod_{j=0}^{m-1}w(\psi^j(\cdot))}\big)\circ(\psi^m)^{-1}(y)$$
		is a well-defined, continuous function. In case of $\tilde{f}\in\mathscr{F}(Y)$ a straightforward calculation gives
		$$\big(C_{w,\psi,(\psi^{m-1})^{-1}(Y)}\circ\ldots\circ C_{w,\psi,Y}\big)(\tilde{f})=f$$
		in $(\psi^m)^{-1}(Y)$. But in many concrete examples, $\tilde{f}\in\mathscr{F}(Y)$ indeed holds - if also $\mbox{det}J\psi(x)\neq 0$ for all $x\in X$. In case of $\mathscr{F}=C^r$ it follows from the fact that if $C_{w,\psi}$ is well-defined then $w\in C^r(X)$ and therefore $\frac{1}{w}\in C^r(X)$, $\psi$ is $C^r$, too, and by the Inverse Function Theorem (see e.g.\ \cite[Theorem 1.3.2, Remark 1.3.11]{Narasimhan}) the same holds for $(\psi^m)^{-1}$ so that $\tilde{f}\in C^r(Y)$. The same arguments also hold for real analytic or holomorphic functions in several variables (see e.g.\ again \cite{Narasimhan}, resp.\ \cite[Theorem 7.5]{FritzscheGrauert}). Thus, in many concrete examples, condition a) from i) of Theorem \ref{almost characterization of transitivity} is redundant in part iv).
	\end{itemize}
\end{remark}

\begin{proof}[Proof of Theorem \ref{almost characterization of transitivity}]
	Assume that i) holds and let $V_j,W_j\subseteq\mathscr{F}(X)$ be non-empty and open, $j=1,2$. We fix $f_j\in V_j, g_j\in W_j, j=1,2$. Since by $(\mathscr{F}1)$ we have $\mathscr{F}(X)=\mbox{proj}_{\leftarrow n}(\mathscr{F}(X_n),r_{X_{n+1}}^{X_n})$ topologically, there is $n\in\N$ and an absolute convex zero neighborhood $U_n$ in $\mathscr{F}(X_n)$ such that for $j=1,2$ we have
	$$f_j+(r_X^{X_n})^{-1}(U_n)\subseteq V_j\mbox{ and }g_j+(r_X^{X_n})^{-1}(U_n)\subseteq W_j$$
	(see e.g.\ \cite[Chapter 3.3]{Wengenroth}). From hypotheses a) and b) it follows with the aid of Proposition \ref{sufficiency of run-away} that there is $m\in\N$ such that for $j=1,2$ we have
	$$\emptyset\neq C_{w,\psi}^m\big(f_j+(r_X^{X_n})^{-1}(U_n)\big)\cap(g_j+(r_X^{X_n})^{-1}(U_n))\subseteq C_{w,\psi}^m(V_j)\cap W_j,$$
	so that ii) holds.
	
	ii) obviously implies iii).
	
	If ii) is satisfied (respectively if iii) is satisfied and $\mathscr{F}(\Omega)$ is dense in $C(\Omega)$ or $|w|\leq 1$) $C_{w,\psi}$ has in particular dense range so that condition a) in i) follows from Proposition \ref{dense range} while the rest of the properties listed in iv) follow from Proposition \ref{necessary transitive}.
\end{proof}

Concrete applications of Theorem \ref{almost characterization of transitivity} will be postponed to sections \ref{dynamics of weighted composition operators on concrete local function spaces}, \ref{ultradifferentiable}, and \ref{kernels of elliptic differential operators}. We next come to an almost characterization of mixing of weighted composition operators on local function spaces.

\begin{theorem}\label{almost characterization of mixing}
	Let $\mathscr{F}$ be a sheaf on $\Omega$ satisfying $(\mathscr{F}1)-(\mathscr{F}3)$, let $X\subseteq\Omega$ be open, and assume that the weighted composition operator $C_{w,\psi}$ acts locally on $\mathscr{F}(X)$. Then, among the following, $i)\Rightarrow ii)\Rightarrow iii)$:
	\begin{itemize}
		\item[i)] a) For any $m\in\N_0$ and every open, relatively compact $Y\subseteq \psi^m(X)$
		$$r_X^{(\psi^m)^{-1}(Y)}(\mathscr{F}(X))\subseteq\overline{\big(C_{w,\psi,(\psi^{m-1})^{-1}(Y)}\circ\ldots\circ C_{w,\psi,Y}\big)(\mathscr{F}(Y))},$$
		where the closure is taken in $\mathscr{F}\big((\psi^m)^{-1}(Y)\big)$.
		
		\noindent b) There is an open, relatively compact exhaustion $(X_n)_{n\in\N_0}$ of $X$ such that for every $n\in\N_0$ there is $M\in\N$ with
		\begin{itemize}
			\item[b1)] $\psi^m(X_n)$ is open for all $m\geq M$.
			\item[b2)] $X_n\cap \psi^m(X_n)=\emptyset$ for all $m\geq M$.
			\item[b3)] The restriction map $r_X^{X_n\cup\psi^m(X_n)}$ has dense range for all $m\geq M$.
		\end{itemize}
		\item[ii)] $C_{w,\psi}$ is mixing on $\mathscr{F}(X)$.
		\item[iii)] a) from i) holds, $w$ has no zeros, $\psi$ is injective and strong run-away, and in case of $(\mathscr{F}4)$ with continuously differentiable $w$ and $\psi$, additionally $\mbox{det}J\psi(x)\neq 0$ for every $x\in X$.
	\end{itemize}
\end{theorem}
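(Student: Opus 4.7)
The plan is to mirror the argument of Theorem \ref{almost characterization of transitivity}, replacing the transitivity quantifier ``for some $m$'' by the mixing quantifier ``for all sufficiently large $m$'' throughout, and then to supply the one genuinely new piece, namely the upgrade from run-away to strong run-away.

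For the implication i) $\Rightarrow$ ii), I would take non-empty open sets $V, W \subseteq \mathscr{F}(X)$ and fix points $f \in V$, $g \in W$. Invoking the topological projective limit representation $\mathscr{F}(X) \cong \mbox{proj}_{\leftarrow n}(\mathscr{F}(X_n), r_{X_{n+1}}^{X_n})$ guaranteed by $(\mathscr{F}1)$, there is an index $n$ and an absolutely convex zero neighborhood $U_n$ in $\mathscr{F}(X_n)$ such that $f + (r_X^{X_n})^{-1}(U_n) \subseteq V$ and $g + (r_X^{X_n})^{-1}(U_n) \subseteq W$. Hypothesis b) supplies $M \in \N$ so that b1)--b3) hold for every $m \geq M$, and for each such $m$ Proposition \ref{sufficiency of run-away} applies at this particular $(m, n)$ — its hypothesis a) being supplied by our i)a) at the open, relatively compact set $Y = \psi^m(X_n) \subseteq \psi^m(X)$. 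This yields $C_{w,\psi}^m(V) \cap W \neq \emptyset$ for every $m \geq M$, which is precisely mixing.

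For ii) $\Rightarrow$ iii) the easier consequences come almost for free: since mixing implies transitivity, Proposition \ref{necessary transitive} parts i), ii) and (in the $(\mathscr{F}4)$ case with $C^1$ data) vi) furnish zero-freeness of $w$, injectivity of $\psi$, and non-vanishing of $\det J\psi$. Mixing also forces dense range, so Proposition \ref{dense range} delivers condition a).

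The main obstacle, and the only substantive new ingredient, is upgrading run-away to strong run-away. Suppose for contradiction that $\psi$ is not strong run-away: pick a compact $K \subseteq X$ and a strictly increasing sequence $m_k \to \infty$ together with points $x_{m_k} \in K$ satisfying $\psi^{m_k}(x_{m_k}) \in K$. I would replay the weakly mixing contradiction from Proposition \ref{necessary transitive} v-2 verbatim: use $(\mathscr{F}2)$ to pick $f_K$ non-vanishing on $K$, set $\alpha = \inf_{y \in K}|f_K(y)|$ and $\beta = \sup_{y \in K}|f_K(y)|$, and form the open sets $U_2$ and $V_2$ of that proof. Because $C_{w,\psi}$ is mixing, for every \emph{sufficiently large} $m$ the two sets $C_{w,\psi}^m(U_2) \cap U_2$ and $C_{w,\psi}^m(U_2) \cap V_2$ are simultaneously non-empty; applying this at any $m_k$ beyond that threshold produces exactly the contradictory pair of inequalities (the estimate labelled (\ref{repairing inequality 1}) and its counterpart) from v-2, evaluated at $x_{m_k}$. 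The reason this step is the crux rather than a mere citation is that the weakly mixing argument locates only \emph{some} $m$ at which both intersections occur, whereas here one must hit one of the \emph{specific} indices $m_k$, which requires the full ``for all large $m$'' content of mixing and is not available from weak mixing alone.
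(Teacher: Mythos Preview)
Your proposal is correct and follows essentially the same route as the paper. The only cosmetic difference is that for the easy consequences in ii) $\Rightarrow$ iii) the paper observes that mixing implies weakly mixing and cites Theorem \ref{almost characterization of transitivity} wholesale, whereas you cite the underlying Propositions \ref{necessary transitive} and \ref{dense range} directly; the strong run-away contradiction argument is identical to the paper's.
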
  

\begin{proof}
	In order to show that i) implies ii) let $V,W\subseteq\mathscr{F}(X)$ be open and non-empty. As in the proof of the implication "i)$\Rightarrow$ ii)" of Theorem \ref{almost characterization of transitivity}, let $f\in V, g\in W$, and choose $n\in\N$ and an absolutely convex zero neighborhood $U_n$ in $\mathscr{F}(X_n)$ such that
	$$f+(r_X^{X_n})^{-1}(U_n)\subseteq V\mbox{ and }g+(r_X^{X_n})^{-1}(U_n)\subseteq W.$$
	From the hypotheses a), b1)-b3) it follows together with Proposition \ref{sufficiency of run-away} that there is $M\in\N$ such that for any $m\geq M$
	$$\emptyset\neq C_{w,\psi}^m\big(f+(r_X^{X_n})^{-1}(U_n)\big)\cap (g+(r_X^{X_n})^{-1}(U_n))\subseteq C_{w,\psi}^m(V)\cap W,$$
	so that $C_{w,\psi}$ is mixing.
	
	If on the other hand $C_{w,\psi}$ is mixing, $C_{w,\psi}$ is in particular weakly mixing so that by Theorem \ref{almost characterization of transitivity} we only have to show that $\psi$ is strong run away. Assume that this is not the case, i.e.\ that there is a compact subset $K$ of $X$ and a strictly increasing sequence of natural numbers $(m_l)_{l\in\N}$ such that
	$$\forall\,l\in\N\,\exists\, x_l\in K:\, \psi^{m_l}(x_l)\in K.$$
	By $(\mathscr{F}2)$ there is $f_K\in\mathscr{F}(X)$ such that $f_K(x)\neq 0$ for all $x$ in $K$. Then
	$$\alpha:=\inf_{x\in K}|f_K(x)|>0\mbox{ and }\beta:=\sup_{x\in K}|f_K(x)|<\infty$$
	and the set
	$$U:=\{g\in\mathscr{F}(X);\,\forall\,x\in K:\,\frac{\alpha}{2}<|g(x)|<2\beta\}$$
	contains $f_K$ and is open with respect to the compact-open topology and is therefore an open neighborhood of $f_K$ in $\mathscr{F}(X)$.
	The set
	$$V:=\{g\in\mathscr{F}(X);\,\sup_{x\in K}|g(x)|<\frac{\alpha^2}{8\beta}\}$$
	is open in $\mathscr{F}(X)$, too, and contains the zero function. Since $C_{w,\psi}$ is mixing, there is $M\in\N$ such that
	$$\forall\,m\geq M:\,C_{w,\psi}^m(U)\cap U\neq \emptyset\mbox{ and }C_{w,\psi}^m(U)\cap V\neq \emptyset.$$
	Now we fix $l\in\N$ with $m_l>M$ and pick $f\in U$ with $C_{w,\psi}^{m_l}(f)\in U$ as well as $g\in U$ with $C_{w,\psi}^{m_l}(g)\in V$. As in the proof of Proposition \ref{necessary transitive} v) under the additional assumption v-2) one deduces the contradiction
	$$\frac{\alpha}{4\beta}<|\prod_{j=0}^{m_l-1} w(\psi^j(x_l))|<\frac{\alpha}{4\beta}.$$
	Thus, $\psi$ is strong run-away.
\end{proof}

\section{Dynamics of weighted composition operators on concrete local function spaces}\label{dynamics of weighted composition operators on concrete local function spaces}

As a first application of the results from the previous section we next show how to use them to recover characterizations of transitivity/hypercyc\-li\-ci\-ty and mixing of weighted composition operators on various concrete functions spaces obtained by several authors or which we assume to be well-known  (and add a slight generalization here and there).\\

\textbf{4.1.\ Continuous functions.} For an arbitrary locally compact, $\sigma$-compact, non-compact Hausdorff space $\Omega$ the sheaf of continuous functions $C$ satisfies properties $(\mathscr{F}1)-(\mathscr{F}3)$ as explained in example \ref{examples of sheaves} i). Clearly, for an arbitrary open subset $X\subseteq\Omega$, $w\in C(X)$, and continuous $\psi:X\rightarrow X$ the weighted composition operator $C_{w,\psi}$ is well-defined on $C(X)$ and acts locally on $C(X)$.

Recall that locally compact spaces are completely regular and that by \cite[Theorem 5]{Warner} for a completely regular topological space $Z$ the space $C(Z)$ equipped with the compact-open topology is separable, if and only if $Z$ has a separable metrizable compression, i.e.\ if and only if $Z$ has a weaker separable metrizable topology. Thus, in case the open subset $X$ of $\Omega$ below has a weaker separable metrizable topology, part a) of the next result characterizes hypercyclicity of $C_{w,\psi}$ on $C(X)$.

\begin{corollary}\label{topological space}
	Let $\Omega$ be a locally compact, $\sigma$-compact, non-compact Hausdorff space, $X\subseteq \Omega$ be open, $w\in C(X)$ and $\psi:X\rightarrow X$ be continuous. If $\Omega\neq\R^d$ we assume additionally that $\psi$ is open.
	\begin{itemize}
		\item[a)] The following are equivalent.
		\begin{itemize}
			\item[i)] $C_{w,\psi}$ is weakly mixing on $C(X)$.
			\item[ii)] $C_{w,\psi}$ is transitive on $C(X)$.
			\item[iii)] $w$ has no zeros, $\psi$ is injective and run-away.
		\end{itemize}
		\item[b)] The following are equivalent.
		\begin{itemize}
			\item[i)] $C_{w,\psi}$ is mixing on $C(X)$.
			\item[ii)] $w$ has no zeros, $\psi$ is injective and strong run-away.
		\end{itemize}
	\end{itemize}
\end{corollary}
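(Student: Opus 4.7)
The plan is to deduce both parts of the corollary directly from Theorems \ref{almost characterization of transitivity} and \ref{almost characterization of mixing} applied to the sheaf $\mathscr{F}=C$. As recalled in Example \ref{examples of sheaves} i), $C$ satisfies $(\mathscr{F}1)$--$(\mathscr{F}3)$, and $C_{w,\psi}$ clearly acts locally on $C(X)$. Since $\mathscr{F}(\Omega)=C(\Omega)$ is trivially dense in $C(\Omega)$ with respect to the compact-open topology, Theorem \ref{almost characterization of transitivity} already supplies the full chain i)$\Rightarrow$ii)$\Rightarrow$iii)$\Rightarrow$iv). So the only missing step for part a) is to show that iii) of the corollary implies condition i) of Theorem \ref{almost characterization of transitivity}; for part b), the analogous task is to derive condition i) of Theorem \ref{almost characterization of mixing} from ii) of the corollary. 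Both reduce to verifying the abstract conditions a) and b1)--b3) under the stated assumptions on $w$ and $\psi$, with ``for all $m\geq M$'' in the mixing case, which will be supplied by strong run-away.

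Condition a) I would handle via explicit inversion. Since $\psi$ is injective and open (the latter being automatic for $\Omega=\R^d$ by Brouwer's Invariance of Domain Theorem), $\psi^m$ is a homeomorphism $X\to\psi^m(X)$ and $\psi^m(X)$ is open in $X$. For any open, relatively compact $Y\subseteq\psi^m(X)$ and any $f\in C((\psi^m)^{-1}(Y))$ the formula
$$\tilde f(y):=\frac{f\bigl((\psi^m)^{-1}(y)\bigr)}{\prod_{j=0}^{m-1}w\bigl(\psi^j((\psi^m)^{-1}(y))\bigr)}\qquad(y\in Y)$$
defines a continuous function on $Y$ (this is exactly where the absence of zeros of $w$ is used), and a straightforward calculation together with injectivity of $\psi^m$ yields $(C_{w,\psi,(\psi^{m-1})^{-1}(Y)}\circ\ldots\circ C_{w,\psi,Y})(\tilde f)=f$ on $(\psi^m)^{-1}(Y)$. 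Thus the relevant composition of local operators is actually surjective, which is considerably stronger than the density statement required in a).

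For condition b), I would fix any open, relatively compact exhaustion $(X_n)_{n\in\N_0}$ of $X$ (which exists by local compactness and $\sigma$-compactness). Part b1) then holds for all $m$ since $\psi^m$ is open, and b2) holds for suitable $m$ (respectively for all $m\geq M$) by the (strong) run-away property applied to the compact set $\overline{X_n}$. For b3), recall that $X$ is normal as an open subset of the locally compact, $\sigma$-compact Hausdorff space $\Omega$. Given any compact $K\subseteq X_n\cup\psi^m(X_n)$ and $g\in C(X_n\cup\psi^m(X_n))$, local compactness together with Urysohn's Lemma yields a cutoff $\phi\in C(X)$ with compact support inside $X_n\cup\psi^m(X_n)$ and $\phi\equiv 1$ on $K$; then $\phi\cdot g$, extended by zero, lies in $C(X)$ and agrees with $g$ on $K$. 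Hence $r_X^{X_n\cup\psi^m(X_n)}$ is not merely dense-ranged but admits exact extensions on every compact set, which settles b3). The only real point of friction is b1) outside the euclidean case, which is exactly why the hypothesis that $\psi$ be open is imposed when $\Omega\neq\R^d$; the remaining verifications are routine because the sheaf of continuous functions is so flexible.
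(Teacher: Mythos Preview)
Your proposal is correct and follows essentially the same route as the paper's proof: both invoke Theorems \ref{almost characterization of transitivity} and \ref{almost characterization of mixing}, verify condition a) via the explicit inverse formula of Remark \ref{considering the almost characterization} iii), obtain b1) from Brouwer's Invariance of Domain (resp.\ the openness hypothesis), b2) from the (strong) run-away property, and b3) via a Urysohn cutoff argument. The only cosmetic difference is that the paper cites Remark \ref{considering the almost characterization} for condition a) whereas you write out the inversion explicitly.
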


\begin{proof}
	We first prove a). Clearly, i) implies ii) and by Theorem \ref{almost characterization of transitivity}, ii) implies iii). Now, if iii) is satisfied it follows from Remark \ref{considering the almost characterization} that condition i) a) from Theorem \ref{almost characterization of transitivity} is fulfilled. Let $(X_n)_{n\in\N_0}$ be an arbitrary open, relatively compact exhaustion of $X$. If $\Omega=\R^d$ it follows from Brouwer's Invariance of Domain Theorem, $\psi^m$ is an open mapping on $X$ for any $m$, in particular, for every $n\in\N_0$ and any $m\in \N$, $\psi^m(X_n)$ is an open subset of $X$. In case of $\Omega\neq\R^d$ the same follows from the hypotheses on $\psi$. For fixed $n\in\N_0$, since $\psi$ is run-away, there is $m\in\N$ such that
	$$X_n\cap\psi^m(X_n)=\emptyset.$$
	Let $K$ be a compact subset of $X_n\cup\psi^m(X_n)$. Since $X$ is a locally compact Hausdorff space, $X$ is normal and therefore in particular $X$ is regular. Thus, for every $x\in K$ there is an open neighborhood $V_x$ of $x$ such that $\overline{V_x}\subseteq X_n\cup\psi^m(X_n)$. Using the compactness of $K$ we deduce from this that there is an open neighborhood $V$ of $K$ such that $\overline{V}\subseteq X_n\cup\psi^m(X_n)$. By Urysohn's Lemma \cite[Theorem 1.5.11]{Engelking} there is $h\in C(X)$ such that $h=1$ on $K$ and $h=0$ on $X\backslash V$, thus $\supp h\subseteq \overline{V}$. Now if $g\in C(X_n\cup\psi^m(X_n))$ we obtain a continuous function $f$ on $X$ with $f_{|K}=g$ by extending $h g$ by zero outside of $X_n\cup\psi^m(X_n)$. Since $K\subset X_n\cup\psi^m(X_n)$ was an arbitary compact set it follows that $r_X^{X_n\cup\psi^m(X_n)}$ has dense range. Hence, condition i) b) from Theorem \ref{almost characterization of transitivity} is also satisfied, so that iii) implies i).
	
	Referring to Theorem \ref{almost characterization of mixing} instead of Theorem \ref{almost characterization of transitivity} the proof of b) is mutatis mutandis the same as the proof of part a).
\end{proof}

\textbf{4.2.\ $C^r$-functions on open subsets of $\R^d$.} Let $\mathscr{F}$ be the sheaf $C^r$ of $r$-times continuously differentiable functions on $\R^d$ (equipped with the topology of local uniform convergence of partial derivatives of order less than $r+1$). Then $C^r$ satisfies properties $(\mathscr{F}1)-(\mathscr{F}4)$ as explained in example \ref{examples of sheaves} ii) and $C^r(X)$ is a separable Fr\'echet space for every open subset $X\subseteq\R^d$. Clearly, for an arbitrary open subset $X\subseteq\R^d$, $w\in C^r(X)$, and a $C^r$-function $\psi:X\rightarrow X$ the weighted composition operator $C_{w,\psi}$ is well-defined on $C^r(X)$ and acts locally on $C^r(X)$. The next application of the results from the previous section gives the results obtained by Przestacki in \cite{Przestacki} in case of $r=\infty$.

\begin{corollary}\label{smooth case}
	Let $X\subseteq \R^d$ be open, $r\in\N\cup\{\infty\}$, $w\in C^r(X)$ and $\psi:X\rightarrow X$ be a $C^r$-function.
	\begin{itemize}
		\item[a)] The following are equivalent.
		\begin{itemize}
			\item[i)] $C_{w,\psi}$ is weakly mixing on $C^r(X)$.
			\item[ii)] $C_{w,\psi}$ is hypercyclic on $C^r(X)$.
			\item[iii)] $w$ has no zeros, $\psi$ is injective, run-away, and $\mbox{det}J\psi(x)\neq 0$ for all $x\in X$.
		\end{itemize}
		\item[b)] The following are equivalent.
		\begin{itemize}
			\item[i)] $C_{w,\psi}$ is mixing on $C^r(X)$.
			\item[ii)] $w$ has no zeros, $\psi$ is injective, strong run-away, and $\mbox{det}J\psi(x)\neq 0$ for all $x\in X$.
		\end{itemize}
	\end{itemize}
\end{corollary}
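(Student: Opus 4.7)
The plan is to apply Theorems \ref{almost characterization of transitivity} and \ref{almost characterization of mixing} to the sheaf $\mathscr{F}=C^r$, which satisfies $(\mathscr{F}1)$--$(\mathscr{F}4)$ as recorded in Example \ref{examples of sheaves} ii) and the discussion following the definition of $(\mathscr{F}4)$. Since $C^r(X)$ is a separable Fr\'echet space, Birkhoff's Transitivity Criterion identifies transitivity with hypercyclicity; and since polynomials are compact-open dense in $C(\R^d)$, $\mathscr{F}(\R^d)$ is dense in $C(\R^d)$, so the density hypothesis appearing in Theorem \ref{almost characterization of transitivity} is automatic. Consequently, the implications $i)\Rightarrow ii)$ in both parts are trivial, while $ii)\Rightarrow iii)$ in part a) (resp.\ $i)\Rightarrow ii)$ in part b)) reduces immediately to the necessary conditions in the respective abstract theorem.

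For the reverse implications I must verify conditions a) and b) of Theorem \ref{almost characterization of transitivity} (resp.\ of Theorem \ref{almost characterization of mixing}) starting from the data given in iii) (resp.\ ii)). Condition a) is supplied by the case $\mathscr{F}=C^r$ discussion in Remark \ref{considering the almost characterization} iii): since $w$ is zero-free, $\psi$ is injective, and $\det J\psi$ never vanishes, the Inverse Function Theorem turns each iterate $\psi^m$ into a $C^r$-diffeomorphism of $X$ onto the open set $\psi^m(X)$, and the explicit formula in that remark produces an honest preimage $\tilde f\in C^r(Y)$ of any $f\in C^r((\psi^m)^{-1}(Y))$. For condition b) I take $(X_n)_{n\in\N_0}$ to be any open, relatively compact exhaustion of $X$: openness of $\psi^m(X_n)$ (b1) holds for every $m$ by the diffeomorphism property (or directly by Brouwer's Invariance of Domain); disjointness $X_n\cap\psi^m(X_n)=\emptyset$ (b2) follows from applying run-away, resp.\ strong run-away, to $\overline{X_n}$, giving such $m$ for each $n$ in part a), resp.\ an $M$ with the condition for all $m\geq M$ in part b).

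The main obstacle is b3), the density of $r_X^{X_n\cup\psi^m(X_n)}:C^r(X)\to C^r\bigl(X_n\cup\psi^m(X_n)\bigr)$. I handle this by a standard cut-off argument: given any $f\in C^r(X_n\cup\psi^m(X_n))$ and a compact $K\subseteq X_n\cup\psi^m(X_n)$, choose $\chi\in C_c^\infty(X_n\cup\psi^m(X_n))$ equal to $1$ on $K$; then the extension by zero of $\chi f$ belongs to $C^r(X)$ and its restriction to $X_n\cup\psi^m(X_n)$ agrees with $f$ on $K$, so every defining seminorm $\|\cdot\|_{l,K}$ is matched exactly and density follows. With a), b1), b2) and b3) all verified, Theorem \ref{almost characterization of transitivity} produces weak mixing in part a) and Theorem \ref{almost characterization of mixing} produces mixing in part b), closing both equivalences.
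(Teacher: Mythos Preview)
Your proposal is correct and follows essentially the same route as the paper's proof: both invoke Birkhoff's criterion and the compact-open density of polynomials in $C(\R^d)$ to obtain the necessary direction from Theorem~\ref{almost characterization of transitivity} (resp.\ Theorem~\ref{almost characterization of mixing}), then verify condition a) via Remark~\ref{considering the almost characterization}~iii) and condition b3) by the same cut-off argument with a compactly supported smooth bump function. The only cosmetic difference is that you justify b1) via the Inverse Function Theorem/diffeomorphism property while the paper cites \cite[Theorem 1.3.2]{Narasimhan}, which amounts to the same thing.
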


\begin{proof}
	We first prove a). Since $C^r(X)$ is a separable Fr\'echet space, by Birkhoff's transitivity criterion, hypercyclicity of $C_{w,\psi}$ is equivalent to transitivity. Thus, i) implies ii) and since polynomials, a fortiori $C^r(\R^d)$, are dense in $C(\R^d)$ (see e.g.\ \cite[Chapter 15, Corollary 4]{Treves}) by Theorem \ref{almost characterization of transitivity}, ii) implies iii). Now, if iii) is satisfied it follows from Remark \ref{considering the almost characterization} that condition i) a) from Theorem \ref{almost characterization of transitivity} is fulfilled. Let $(X_n)_{n\in\N_0}$ be an arbitrary open, relatively compact exhaustion of $X$. Because $\mbox{det}J(x)\neq 0$ for all $x\in X$ it follows together with the injectivity of $\psi$ that $\psi^m$ is an open mapping (see e.g.\ \cite[Theorem 1.3.2]{Narasimhan}). In particular, for every $n\in\N_0$ and any $m\in \N$, $\psi^m(X_n)$ is an open subset of $X$. For fixed $n\in\N_0$, since $\psi$ is run-away, there is $m\in\N$ such that
	$$X_n\cap\psi^m(X_n)=\emptyset.$$
	Let $K$ be a compact subset of $X_n\cup\psi^m(X_n)$ and $V$ an open $K$ such that $\overline{V}\subseteq X_n\cup\psi^m(X_n)$. Then there is $h\in C^\infty(X)$ such that $h=1$ on $K$ and $\supp h\subseteq \overline{V}$. Now if $g\in C^\infty(X_n\cup\psi^m(X_n))$ we obtain a smooth function $f$ on $X$ with $f_{|K}=g$ by extending $h g$ by zero outside of $X_n\cup\psi^m(X_n)$. Since $K\subseteq X_n\cup\psi^m(X_n)$ was an arbitary compact set we conclude that $r_X^{X_n\cup\psi^m(X_n)}$ has dense range so that i) b) from Theorem \ref{almost characterization of transitivity} is fulfilled. Thus, iii) implies i).
	
	Referring again to Theorem \ref{almost characterization of mixing} instead of Theorem \ref{almost characterization of transitivity} the proof of b) is mutatis mutandis the same as the proof of part a).
\end{proof}

\textbf{4.3. Real analytic functions on open subsets of $\R^d$.} Let $\mathscr{F}$ be the sheaf $\mathscr{A}$ of real analytic functions on $\R^d$ (equipped with its natural topology, see Example \ref{examples of sheaves} v)). Then $\mathscr{A}$ satisfies properties $(\mathscr{F}1)-(\mathscr{F}4)$ as explained in Examples \ref{examples of sheaves} v) and clearly, for an arbitrary open subset $X\subseteq\R^d$, $w\in \mathscr{A}(X)$, and real analytic $\psi:X\rightarrow X$ the weighted composition operator $C_{w,\psi}$ is well-defined on $\mathscr{A}(X)$ and acts locally on $\mathscr{A}(X)$. For the special case of $w=1$, the equivalence of ii) and iii) in part a) of our next application of the results from the previous section was obtained by Bonet and Doma\'nski in \cite[Theorem 2.3]{BonetDomanski}.

\begin{corollary}\label{real analytic case}
	Let $X\subseteq \R^d$ be open, $w\in \mathscr{A}(X)$ and $\psi:X\rightarrow X$ be real analytic.
	\begin{itemize}
		\item[a)] The following are equivalent.
		\begin{itemize}
			\item[i)] $C_{w,\psi}$ is weakly mixing on $\mathscr{A}(X)$.
			\item[ii)] $C_{w,\psi}$ is transitive on $\mathscr{A}(X)$.
			\item[iii)] $w$ has no zeros, $\psi$ is injective, run-away, and $\mbox{det}J\psi(x)\neq 0$ for all $x\in X$.
		\end{itemize}
		\item[b)] The following are equivalent.
		\begin{itemize}
			\item[i)] $C_{w,\psi}$ is mixing on $\mathscr{A}(X)$.
			\item[ii)] $w$ has no zeros, $\psi$ is injective, strong run-away, and $\mbox{det}J\psi(x)\neq 0$ for all $x\in X$.
		\end{itemize}
	\end{itemize}
\end{corollary}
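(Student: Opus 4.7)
The plan is to mirror the proof of Corollary \ref{smooth case}, with real analytic Runge-type approximation taking the place of smooth cutoff arguments. For part a), the implication i) $\Rightarrow$ ii) is immediate, and ii) $\Rightarrow$ iii) follows from Theorem \ref{almost characterization of transitivity} once one notes that polynomials lie in $\mathscr{A}(\R^d)$ and are dense in $C(\R^d)$ for the compact-open topology by the Weierstrass approximation theorem, so that the density hypothesis of Theorem \ref{almost characterization of transitivity} is met. Transitivity then forces $w$ to be zero-free, $\psi$ to be injective and run-away, and $\det J\psi$ to be non-vanishing thanks to $(\mathscr{F}4)$.

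For the converse iii) $\Rightarrow$ i), I would verify the two conditions in Theorem \ref{almost characterization of transitivity} item i). Condition a) follows from Remark \ref{considering the almost characterization} iii): zero-freeness of $w$ gives $1/w \in \mathscr{A}(X)$, and injectivity together with non-vanishing $\det J\psi$ allows the real analytic inverse function theorem (see e.g.\ \cite{Narasimhan}) to produce a real analytic local inverse of each $\psi^m$ on $\psi^m(X)$, so the candidate $\tilde{f}$ of the remark lies in $\mathscr{A}(Y)$. For condition b), I would fix any open relatively compact exhaustion $(X_n)_{n\in\N_0}$ of $X$. Since $\psi$ is a local real analytic diffeomorphism it is an open map, and the same holds for every iterate, giving b1); run-away supplies, for each $n$, some $m$ with $X_n \cap \psi^m(X_n) = \emptyset$, giving b2).

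The main obstacle is b3), namely the density of the restriction map $r_X^{X_n \cup \psi^m(X_n)}$. This requires a Runge-type approximation for real analytic functions on the disjoint union of two open relatively compact subsets of $X$. The plan is to exploit Martineau's description of the $\mathscr{A}$-topology from Example \ref{examples of sheaves} iv): it suffices to show that germs of holomorphic functions on any compact $K \subseteq X_n \cup \psi^m(X_n)$ can be approximated by elements of $\mathscr{A}(X)$. Decomposing $K = K_1 \cup K_2$ with $K_1 \subseteq \overline{X_n}$ and $K_2 \subseteq \overline{\psi^m(X_n)}$, one chooses disjoint open complex neighborhoods $U_1, U_2 \subseteq \C^d$ of $K_1, K_2$ which, after shrinking, are holomorphically convex inside some Stein complex neighborhood $\widetilde{X} \subseteq \C^d$ of $X$. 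The Oka--Weil theorem then approximates any bounded holomorphic function on $U_1 \cup U_2$ uniformly on $K_1 \cup K_2$ by elements of $\mathscr{H}(\widetilde{X})$, and restriction to $\R^d$ produces the required approximating elements of $\mathscr{A}(X)$.

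Part b) is then handled by exactly the same argument, invoking Theorem \ref{almost characterization of mixing} in place of Theorem \ref{almost characterization of transitivity} and replacing run-away by strong run-away, so that conditions b1)--b3) hold for all $m \geq M$ rather than merely for some single $m$.
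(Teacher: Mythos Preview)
Your proposal is correct and follows the same overall route as the paper: the implications i)$\Rightarrow$ii)$\Rightarrow$iii) via Theorem \ref{almost characterization of transitivity} and density of polynomials in $C(\R^d)$, and iii)$\Rightarrow$i) by verifying conditions a) and b) of that theorem, with condition a) coming from Remark \ref{considering the almost characterization} iii) via the real analytic inverse function theorem.

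The one place where you diverge from the paper is the verification of b3). You propose constructing a Stein complex neighbourhood $\widetilde{X}$ of $X$, arranging that suitable complex neighbourhoods of the two pieces of $K$ sit holomorphically convexly inside $\widetilde{X}$, and then invoking Oka--Weil to approximate by elements of $\mathscr{H}(\widetilde{X})$. The paper takes a shorter path: any compact subset $K$ of $\R^d$ is polynomially convex in $\C^d$, so by \cite[Theorem 2.7.7]{Hoermander2} one can approximate a function holomorphic on a complex neighbourhood $W_0$ of $K$ uniformly on any relatively compact $W\Subset W_0$ by holomorphic \emph{polynomials}, whose restrictions to $X$ lie in $\mathscr{A}(X)$. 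This avoids the need to produce a Stein tube $\widetilde{X}$ and to argue about holomorphic convexity inside it; in particular your phrase ``$U_1,U_2$ holomorphically convex inside $\widetilde{X}$'' is imprecise (holomorphic convexity is a property of compact sets, not open ones), and what you actually need is that $K_1\cup K_2$ is $\mathscr{H}(\widetilde{X})$-convex. That does hold---it follows from polynomial convexity of $K_1\cup K_2$ in $\C^d$---but once you use polynomial convexity you may as well approximate by polynomials directly and dispense with $\widetilde{X}$ altogether. Your approach would generalise better to real analytic manifolds without an ambient $\R^d$, whereas the paper's approach exploits the ambient euclidean space to get the sharper conclusion that polynomials already suffice.
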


\begin{proof}
	As before, we first prove a). Obviously, i) implies ii) and because polynomials are dense in $C(\R^d)$ (see e.g.\ \cite[Chapter 15, Corollary 4]{Treves}) by Theorem \ref{almost characterization of transitivity}, ii) implies iii). Now, if iii) is satisfied it follows from Remark \ref{considering the almost characterization} that condition i) a) from Theorem \ref{almost characterization of transitivity} is fulfilled. Let $(X_n)_{n\in\N_0}$ be an arbitrary open, relatively compact exhaustion of $X$. As in the proof of Corollary \ref{smooth case} it follows that for every $n\in\N_0$ and any $m\in \N$, $\psi^m(X_n)$ is an open subset of $X$. For fixed $n\in\N_0$, since $\psi$ is run-away, there is $m\in\N$ such that
	$$X_n\cap\psi^m(X_n)=\emptyset,$$
	so that b1) and b2) from i) in Theorem \ref{almost characterization of transitivity} are fulfilled.
	
	In order to show that b3) is fulfilled, too, let $f\in\mathscr{A}(X_n\cup\psi^m(X_n))$ be arbitrary and let $V$ be any neighborhood of $f$ in $\mathscr{A}(X_n\cup\psi^m(X_n))$. By the definition of the topology on $\mathscr{A}(X_n\cup\psi^m(X_n))$ there is a compact subset $K$ of $X_n\cup \psi^m(X_n)$ and a complex neighborhood $W_0\subseteq\C^d$ of $K$ such that $f$ extends to a holomorphic function on $W_0$ and such that for every complex neighborhood $W$ of $K$ with $\overline{W}\subseteq W_0$ there is $\delta>0$ with
	$$\{g\in\mathscr{H}^\infty(W);\,\|f-g\|_{\infty,W}<\delta\}\subseteq V,$$
	where $\|\cdot\|_{\infty,W}$ denotes the supremum norm over $W$. Because compact subsets of $\R^d$ are polynomially convex in $\C^d$ it follows from \cite[Theorem 2.7.7]{Hoermander2} that for any relatively compact, complex neighborhood $W$ of $K$ with $\overline{W}\subseteq W_0$ there is a (holomorphic) polynomial $p$ such that $\|f-p\|_{\infty,W}<\delta$. In particular, $p_{|X}\in V\cap\mathscr{A}(X)$. By the arbitrariness of $V$ and $f$ it follows that the restriction map $r_X^{X_n\cup\psi^m(X_n)}$ has dense range so that b3) of part i) in Theorem \ref{almost characterization of transitivity} is indeed fulfilled. Now, by the same theorem, iii) implies i) so that a) is proved.
	
	The proof of part b) is again done as the proof of a) by referring to Theorem \ref{almost characterization of mixing} instead of Theorem \ref{almost characterization of transitivity}.
\end{proof}

A generalization of the setting of real analytic functions is given in the next section.

\section{Spaces of ultradifferentiable functions}\label{ultradifferentiable}

In this section we consider as our function spaces defined by local properties spaces of ultradifferentiable functions on open subsets of $\R^d$, both of Roumieu type and Beurling type, and both quasianalytic classes as well as non-quasianalytic classes.

There are at least two ways to define spaces of ultradifferentiable functions. Classical Denjoy-Carleman classes are defined as smooth functions satisfying certain growth conditions on their Taylor coefficients while it was observed by Beurling \cite{Beurling} (see also Bj\"orck \cite{Bjoerck}) that one can also use decay properties with respect to a weight function of the Fourier transform of compactly supported smooth functions for this purpose as well. The latter approach was vastly generalized by Braun, Meise, and Taylor in \cite{BraunMeiseTaylor}. It is their approach to ultradifferentiable functions which we will follow here. For a comparison of these two approaches, see \cite{BonetMeiseMelikhov}; see also the article \cite{RainerSchindl1} by Rainer and Schindl.

Recall that a continuous increasing function $\omega:[0,\infty)\rightarrow [0,\infty)$ satisfying $\omega_{|[0,1]}=0$ is called a \textit{weight function} if the following properties hold:
\begin{enumerate}
	\item[$(\alpha)$] There is $K\geq 1$ such that $\omega(2t)\leq K(1+\omega(t))$ for all $t\geq 0$.
	\item[$(\beta)$] $\omega(t)=O(t)$ as $t$ tends to infinity.
	\item[$(\gamma)$] $\log(1+t)=o(\omega(t))$ as $t$ tends to infinity.
	\item[$(\delta)$] $\varphi:[0,\infty)\rightarrow[0,\infty), \varphi(x):=\omega(e^x)$ is convex.
\end{enumerate}
Recall that a weight function $\omega$ is called \textit{quasianalytic} if it satisfies the property
\begin{enumerate}
	\item[$(q)$] $\int_1^\infty\frac{\omega(t)}{t^2}dt=\infty$.
\end{enumerate}
A weight function which does not satisfy $(q)$ is called \textit{non-quasianalytic}. Because of ($\gamma$) and ($\delta$), for a weight function $\omega$ and $\varphi$ as in $(\delta)$, the \textit{Young conjugate} $\varphi^*$ of $\varphi$
$$\varphi^*:[0,\infty)\rightarrow[0,\infty),\varphi^*(y):=\sup_{x\geq 0}(yx-\varphi(x))$$
is well-defined, convex, increasing, and satisfies $\varphi^*(0)=0,\lim_{y\rightarrow\infty}\frac{y}{\varphi^*(y)}=0$, and $(\varphi^*)^*=\varphi$.
For a weight function $\omega$ and an open $X\subseteq\R^d$ we define
\begin{align*}
	\mathscr{E}_{(\omega)}(X)&:=\{f\in C^\infty(X):\forall\,K\subseteq X\mbox{ compact}\,\forall\,m\in\N:\\\
	&\|f\|_{(\omega),K,m}:=\sup_{x\in K}\sup_{\alpha\in\N_0^d}|\partial^\alpha f(x)|\exp\Big(-m\varphi^*\big(\frac{|\alpha|}{m}\big)\Big)<\infty\}
\end{align*}
and
\begin{align*}
	\mathscr{E}_{\{\omega\}}(X)&:=\{f\in C^\infty(X):\forall\,K\subseteq X\mbox{ compact}\,\exists\,m\in\N:\\\
	&\|f\|_{\{\omega\},K,m}:=\sup_{x\in K}\sup_{\alpha\in\N_0^d}|\partial^\alpha f(x)|\exp\Big(-\frac{1}{m}\varphi^*\big(m|\alpha|\big)\Big)<\infty\}.
\end{align*}
The elements of $\mathscr{E}_{(\omega)}(X)$, resp.\ $\mathscr{E}_{\{\omega\}}(X)$, are called \textit{$\omega$-ultradifferentiable functions of Beurling type on $X$}, resp.\ \textit{$\omega$-ultradifferentiable functions of Roumieu type on $X$}. Obviously, $\mathscr{E}_{(\omega)}(X)\subseteq\mathscr{E}_{\{\omega\}}(X)$ and clearly, $\mathscr{E}_{(\omega)}$ and $\mathscr{E}_{\{\omega\}}$ are sheaves on $\R^d$. $\mathscr{E}_{(\omega)}(X)$ contains non-trivial functions with compact support for some non-empty open $X$ if and only if $\omega$ is non-quasianalytic.\\

Prominent examples of weights are $\omega_\beta(t)=t^\beta$ with $0<\beta<1$ for which $\varphi^*(y)=y/\beta \log(y/e\beta)$ so that $\exp(-\varphi^*(|\alpha|\lambda)/\lambda)=(\lambda/\beta)^{-|\alpha|/\beta}(|\alpha|/e)^{-|\alpha|/\beta}$. By Stirling's formula $\mathscr{E}_{\{\omega_\beta\}}(X)$ is the classical Gevrey class of exponent $1/\beta$, $\Gamma^{1/\beta}(X)$, and $\mathscr{E}_{(\omega_\beta)}(X)$ is the so-called small Gevrey class of exponent $1/\beta$, $\gamma^{1/\beta}(X)$. The spaces $\Gamma^{1/\beta}(X)$ play an important role in the regularity theory of solutions of hypoelliptic partial differential equations, see \cite[Section 11.4]{Hoermander}.

Moreover, for the weight function $\omega(t)=t$ the corresponding Roumieu space $\mathscr{E}_{\{\omega\}}(X)$ coincides with $\mathscr{A}(X)$ while the corresponding Beurling space $\mathscr{E}_{(\omega)}(X)$ consist of the restrictions to $X$ of functions from $\mathscr{H}(\C^d)$.\\

As usual, $\mathscr{E}_{(\omega)}(X)$ will be equipped with the locally convex topology induced by the family  $\{\|\cdot\|_{(\omega),K,m};\,K\subseteq X\mbox{ compact},\,m\in\N\}$ of seminorms, and $\mathscr{E}_{\{\omega\}}(X)$ will be topologized as $\mbox{proj}_{\leftarrow Y}\mbox{ind}_{m\rightarrow\infty}\mathscr{E}_{\{\omega\}}(Y,m)$, where for each open, relatively compact subset $Y$ of $X$
\begin{align*}
	\mathscr{E}_{\{\omega\}}(Y,m):=&\{f\in C^\infty(Y);\\
	&\|f\|_{\{\omega\},Y,m}:=\sup_{x\in Y}\sup_{\alpha\in\N_0^d}|\partial^\alpha f(x)|\exp\Big(-\frac{1}{m}\varphi^*(m|\alpha|)\Big)<\infty\}
\end{align*}
endowed with the norm $\|\cdot\|_{\{\omega\},Y,m}$ is a Banach space.

\begin{proposition}\label{properties F1-F4 for ultradifferentiable}
	Let $\omega$ be a weight function. Equipped with their usual locally convex topologies the sheaves $\mathscr{E}_{(\omega)}$ and $\mathscr{E}_{\{\omega\}}$ on $\R^d$ both satisfy properties  $(\mathscr{F}1)-(\mathscr{F}4)$.
\end{proposition}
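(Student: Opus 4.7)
The plan is to verify $(\mathscr{F}1)$–$(\mathscr{F}4)$ separately for the two sheaves, handling the three easy properties first and isolating $(\mathscr{F}1)$ for the Roumieu class as the one serious obstacle.

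Properties $(\mathscr{F}2)$ and $(\mathscr{F}3)$ reduce to the observation that every polynomial $p\in\C[y_1,\ldots,y_d]$ belongs to $\mathscr{E}_{(\omega)}(\R^d)\subseteq\mathscr{E}_{\{\omega\}}(\R^d)$, because $\partial^\alpha p$ vanishes for $|\alpha|>\deg p$, which makes every defining supremum finite. The constant function $1$ then settles $(\mathscr{F}2)$, while for distinct $x,y$ an affine polynomial vanishing at $x$ and equal to $1$ at $y$ settles $(\mathscr{F}3)$. For $(\mathscr{F}4)$, the inclusion into $C^1$ is automatic since both sheaves lie in $C^\infty$, and the continuity of $-\partial_j\delta_x$ is read directly from the seminorms, which control all partial derivatives. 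For the last clause, given $h\in\R^d\setminus\{0\}$, $\lambda\in\K$, and $x\in X$: if $\lambda\neq 0$ then $f\equiv 1$ evaluates to $-\lambda\neq 0$ on the functional; if $\lambda=0$ then $f(y):=\langle h,y-x\rangle$ evaluates to $|h|^2\neq 0$; in either case the kernel is proper.

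For $(\mathscr{F}1)$ in the Beurling case the argument is essentially automatic. Given an open, relatively compact exhaustion $(X_n)_{n\in\N_0}$ of $X$, every compact $K\subseteq X$ is absorbed by some $\overline{X_n}$, so the countable family $\{\|\cdot\|_{(\omega),\overline{X_n},m};\,n,m\in\N\}$ generates the topology and $\mathscr{E}_{(\omega)}(X)$ is metrizable. Completeness follows by a standard diagonal/Fatou argument: a Cauchy sequence converges locally uniformly in each partial derivative, and passing to the limit inside the defining supremum keeps the limit inside $\mathscr{E}_{(\omega)}(X)$. Hence $\mathscr{E}_{(\omega)}(X)$ is a Fr\'echet space, and Remark \ref{properties of local function sheaves} i) supplies all of $(\mathscr{F}1)$ at once.

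For the Roumieu class the definition $\mathscr{E}_{\{\omega\}}(X)=\mbox{proj}_n\mbox{ind}_m\mathscr{E}_{\{\omega\}}(X_n,m)$ immediately yields the topological identification with $\mbox{proj}_n\mathscr{E}_{\{\omega\}}(X_n)$ demanded in $(\mathscr{F}1)$; the space is webbed because LB-spaces are webbed and the webbed property is preserved under countable projective limits, while Hausdorffness and continuity of point evaluations are inherited from the Banach building blocks $\mathscr{E}_{\{\omega\}}(X_n,m)$ (each seminorm dominates a local sup-norm). The hard part will be the ultrabornological property: this is a genuinely non-trivial PLB-space question and does not come for free from the sheaf structure. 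Here I would invoke the established result from the ultradifferentiable literature, rooted in the work of Braun, Meise, and Taylor (\cite{BraunMeiseTaylor}) and the subsequent projective-description theory, which shows that Roumieu classes on open subsets of $\R^d$ are ultrabornological; alternatively, one verifies it directly by combining weighted $\overline{\partial}$-type estimates with a partition-of-unity argument, showing that the projective topology on $\mathscr{E}_{\{\omega\}}(X)$ agrees with its associated bornological topology.
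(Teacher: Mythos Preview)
Your approach is the same as the paper's: polynomials handle $(\mathscr{F}2)$--$(\mathscr{F}4)$, the Beurling case is Fr\'echet so Remark~\ref{properties of local function sheaves} i) applies, and for the Roumieu case one gets ``webbed'' from the PLB description and invokes a literature result for ``ultrabornological''. Two points, however, need correction.

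First, the sentence ``the definition $\mathscr{E}_{\{\omega\}}(X)=\mbox{proj}_n\mbox{ind}_m\mathscr{E}_{\{\omega\}}(X_n,m)$ immediately yields the topological identification with $\mbox{proj}_n\mathscr{E}_{\{\omega\}}(X_n)$'' silently identifies $\mathscr{E}_{\{\omega\}}(X_n)$ with $\mbox{ind}_m\mathscr{E}_{\{\omega\}}(X_n,m)$. That is false: $X_n$ is open, so $\mathscr{E}_{\{\omega\}}(X_n)$ is itself a projective limit over relatively compact open $Z\subseteq X_n$, not the LB-space $\mbox{ind}_m\mathscr{E}_{\{\omega\}}(X_n,m)$. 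What saves the argument is a shift of index: since $\overline{X_n}\subseteq X_{n+1}$, the set $X_n$ is one of the relatively compact open subsets over which the topology of $\mathscr{E}_{\{\omega\}}(X_{n+1})$ is defined, so the restriction $\tilde{\rho}_{n+1}:\mathscr{E}_{\{\omega\}}(X_{n+1})\to\mbox{ind}_m\mathscr{E}_{\{\omega\}}(X_n,m)$ is continuous. From this one checks that the canonical bijection $i:\mathscr{E}_{\{\omega\}}(X)\to\mbox{proj}_n\mathscr{E}_{\{\omega\}}(X_n)$ is open; the paper writes this out explicitly. It is short, but not vacuous.

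Second, the ultrabornological property is not in \cite{BraunMeiseTaylor}, and the suggested alternative via ``weighted $\overline{\partial}$-type estimates with a partition-of-unity argument'' is not a viable route to ultrabornologicity of a PLB-space. The paper cites the recent result of Debrouwere and Vindas \cite[Proposition~3.2]{DebrouwereVindas}; that is the reference you should use.
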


\begin{proof}
	It is well-known that for open $X\subseteq\R^d$ the space $\mathscr{E}_{(\omega)}(X)$ is a (nuclear) Fr\'echet space, \cite[Proposition 4.9]{BraunMeiseTaylor}. Obviously, point evaluations $\delta_x$ are continuous linear functionals on $\mathscr{E}_{(\omega)}(X)$ for any $x\in X$. Therefore, as observed in Remark \ref{properties of local function sheaves} i) the sheaf $\mathscr{E}_{(\omega)}$ on $\R^d$ satisfies $(\mathscr{F}1)$. Moreover, since $\mathscr{E}_{(\omega)}(X)$ is closed under differentiation and since polynomials obviously belong to $\mathscr{E}_{(\omega)}(X)$, properties $(\mathscr{F}2)-(\mathscr{F}4)$ are fulfilled, too.
	
	Clearly, in the definition of the topology of $\mathscr{E}_{\{\omega\}}(X)$ it is enough to take the projective limit with respect to an open, relatively compact exhaustion $(Y_n)_{n\in\N}$ of $X$. Therefore, being the projective limit of a sequence of $LB$-spaces it follows that $\mathscr{E}_{\{\omega\}}(X)$ is webbed. It has been shown recently by Debrouwere and Vindas \cite[Proposition 3.2]{DebrouwereVindas} that the space of ultra\-differentiable functions of Roumieu type is ultrabornological.
	
	Let $(X_n)_{n\in\N_0}$ be an open, relatively compact exhaustion of $X$. In order to show that the continuous bijection
	$$i:\mathscr{E}_{\{\omega\}}(X)\rightarrow\mbox{proj}_{\leftarrow n}(\mathscr{E}_{\{\omega\}}(X_n),r_{X_{n+1}}^{X_n}),f\mapsto (r_X^{X_n}(f))_{n\in\N_0}$$
	is open, let $V$ be an arbitrary zero neighborhood in $\mathscr{E}_{\{\omega\}}(X)$. By the definition of the topology on $\mathscr{E}_{\{\omega\}}(X)$ there is $n\in\N$ and zero neighborhood $U_n$ in $\mbox{ind}_{m\rightarrow\infty}\mathscr{E}_{\{\omega\}}(X_n,m)$ such that $\rho_n^{-1}(U_n)\subseteq V$ where
	$$\rho_n:\mathscr{E}_{\{\omega\}}(X)\rightarrow\mbox{ind}_{m\rightarrow\infty}\mathscr{E}_{\{\omega\}}(X_n,m), f\mapsto f_{|X_n}.$$
	With the continuous
	$$\tilde{\rho}_{n+1}:\mathscr{E}_{\{\omega\}}(X_{n+1})\rightarrow\mbox{ind}_{m\rightarrow\infty}\mathscr{E}_{\{\omega\}}(X_n,m), f\mapsto f_{|X_n}$$
	it follows that $\tilde{\rho}_{n+1}^{-1}(U_n)$ is a zero neighborhood in $\mathscr{E}_{\{\omega\}}(X_{n+1})$ for which
	$$\tilde{\rho}_{n+1}^{-1}(U_n)\cap r_X^{X_{n+1}}(\mathscr{E}_{\{\omega\}}(X))=r_X^{X_{n+1}}\Big(\rho_n^{-1}(U_n)\Big)\subseteq r_X^{X_{n+1}}\big(V\big).$$
	For $k\in\N_0$, let
	$$\pi_k:\mbox{proj}_{\leftarrow n}(\mathscr{E}_{\{\omega\}}(X_n),r_{X_{n+1}}^{X_n})\rightarrow\mathscr{E}_{\{\omega\}}(X_k), (f_n)_{n\in\N_0}\mapsto f_k$$
	so that
	\begin{align*}
		i(V)&=\pi_{n+1}^{-1}(r_X^{X_{n+1}}(V))\supseteq\pi_{n+1}^{-1}\big(\tilde{\rho}_{n+1}^{-1}(U_n)\cap r_X^{X_{n+1}}(\mathscr{E}_{\{\omega\}}(X))\big)\\
		&=\pi_{n+1}^{-1}\big(\tilde{\rho}_{n+1}^{-1}(U_n)\big).
	\end{align*}
	Since $\tilde{\rho}_{n+1}^{-1}(U_n)$ is a zero neighborhood in $\mathscr{E}_{\{\omega\}}(X_{n+1})$ the above inclusion implies that $i(V)$ is a zero neigborhood in $\mbox{proj}_{\leftarrow n}(\mathscr{E}_{\{\omega\}}(X_n),r_{X_{n+1}}^{X_n})$ so that $i$ is open and the sheaf $\mathscr{E}_{\{\omega\}}$ satisfies $(\mathscr{F}1)$.
	
	Properties $(\mathscr{F}2)-(\mathscr{F}4)$ of $\mathscr{E}_{\{\omega\}}$ follow again from the fact that $\mathscr{E}_{\{\omega\}}(X)$ is closed under differentiation and contains all polynomials.
\end{proof}

Since for arbitrary weight functions $\omega$ the spaces $\mathscr{E}_{(\omega)}(X)$ and $\mathscr{E}_{\{\omega\}}(X)$ are locally convex algebras (see \cite[Proposition 4.4]{BraunMeiseTaylor}) it follows that a weighted composition operator $C_{w,\psi}$ is well-defined on $\mathscr{E}_{(\omega)}(X)$ resp.\ $\mathscr{E}_{\{\omega\}}(X)$ whenever the weight $w$ belongs to the ultradifferentiable class and additionally, the composition with $\psi$ defines a continuous linear operator on $\mathscr{E}_{(\omega)}(X)$, resp.\ $\mathscr{E}_{\{\omega\}}(X)$. For non-quasianalytic weight functions $\omega$ this has been characterized by Fern\'andez and Galbis in \cite{FernandezGalbis} while Rainer and Schindl extended this characterization, among others, to more general weight functions in \cite{RainerSchindl1}, see also \cite{RainerSchindl2}. We define for a weight function $\omega$ property
\begin{enumerate}
	\item[$(\alpha_0)$] $\exists\,C>0, t_0>0\,\forall\lambda\geq 1, t\geq t_0:\,\omega(\lambda t)\leq C\lambda\omega(t)$.
\end{enumerate}
Property $(\alpha_0)$ characterizes when composition with a smooth function $\psi:X\rightarrow X$ with components $\psi_j, 1\leq j\leq d,$ all belonging to $\mathscr{E}_{\{\omega\}}(X)$ defines a continuous linear operator on $\mathscr{E}_{\{\omega\}}(X)$. If $\omega(t)=o(t)$ as $t$ tends to infinity, $(\alpha_0)$ characterizes  when composition with a smooth function $\psi:X\rightarrow X$ with components $\psi_j, 1\leq j\leq d,$ all belonging to $\mathscr{E}_{(\omega)}(X)$ is a continuous linear operator on $\mathscr{E}_{(\omega)}(X)$. Thus, under these conditions, $C_{w,\psi}$ is then a well-defined, continuous linear operator which then also acts locally on $\mathscr{E}_{(\omega)}(X)$ resp.\ $\mathscr{E}_{\{\omega\}}(X)$.

\begin{theorem}\label{dynamics in Beurling}
	Let $\omega$ be a weight function satisfying $(\alpha_0)$ and such that $\omega(t)=o(t)$ as $t$ tends to infinity. Moreover, let $X\subseteq\R^d$ be open, $w\in\mathscr{E}_{(\omega)}(X)$, and $\psi:X\rightarrow X$ be smooth such that $\psi_j\in\mathscr{E}_{(\omega)}(X)$ for all $1\leq j\leq d$.
	\begin{itemize}
	\item[a)] The following are equivalent.
	\begin{itemize}
		\item[i)] $C_{w,\psi}$ is weakly mixing on $\mathscr{E}_{(\omega)}(X)$.
		\item[ii)] $C_{w,\psi}$ is hypercyclic on $\mathscr{E}_{(\omega)}(X)$.
		\item[iii)] $w$ has no zeros, $\psi$ is injective, run-away, and $\mbox{det}J\psi(x)\neq 0$ for all $x\in X$.
	\end{itemize}
	\item[b)] The following are equivalent.
	\begin{itemize}
		\item[i)] $C_{w,\psi}$ is mixing on $\mathscr{E}_{(\omega)}(X)$.
		\item[ii)] $w$ has no zeros, $\psi$ is injective, strong run-away, and $\mbox{det}J\psi(x)\neq 0$ for all $x\in X$.
	\end{itemize}
	\end{itemize}
\end{theorem}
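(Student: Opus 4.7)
The plan is to apply Theorem \ref{almost characterization of transitivity} and Theorem \ref{almost characterization of mixing} to the sheaf $\mathscr{E}_{(\omega)}$ on $\R^d$. By Proposition \ref{properties F1-F4 for ultradifferentiable} it satisfies $(\mathscr{F}1)$--$(\mathscr{F}4)$, and each $\mathscr{E}_{(\omega)}(X)$ is a separable Fr\'echet space on which, under the hypotheses on $w,\psi,\omega$, the operator $C_{w,\psi}$ is well-defined, continuous and acts locally. Birkhoff's transitivity criterion identifies hypercyclicity with transitivity, so i)$\Rightarrow$ii) in part a) is immediate.

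For ii)$\Rightarrow$iii) of part a), I would invoke the ``$\mathscr{F}(\Omega)$ dense in $C(\Omega)$'' branch of Theorem \ref{almost characterization of transitivity}: polynomials lie in $\mathscr{E}_{(\omega)}(\R^d)$ and are dense in $C(\R^d)$ with the compact-open topology by Weierstrass, so that branch applies and yields zero-freeness of $w$, injectivity and run-away of $\psi$, together with $\det J\psi(x)\neq 0$ (the last via $(\mathscr{F}4)$).

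The substantive direction is iii)$\Rightarrow$i). I have to verify the two hypotheses i)a) and i)b) of Theorem \ref{almost characterization of transitivity}. Condition a) is supplied by Remark \ref{considering the almost characterization} iii): for open $Y\subseteq\psi^m(X)$ and $f\in\mathscr{E}_{(\omega)}((\psi^m)^{-1}(Y))$, the candidate preimage $\tilde f=(f/\prod_{j=0}^{m-1}w\circ\psi^j)\circ(\psi^m)^{-1}$ lies in $\mathscr{E}_{(\omega)}(Y)$, because $1/w\in\mathscr{E}_{(\omega)}(X)$ (the Beurling class is a locally convex algebra stable under reciprocals of units), and because $(\psi^m)^{-1}$ is componentwise in $\mathscr{E}_{(\omega)}$ by the inverse function theorem within the Beurling class, valid under $(\alpha_0)$ together with $\omega(t)=o(t)$. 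For condition b) I fix any open, relatively compact exhaustion $(X_n)$ of $X$: Brouwer's invariance of domain gives b1), and the run-away of $\psi$ produces, for each $n$, some $m$ with $X_n\cap\psi^m(X_n)=\emptyset$, settling b2).

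The main obstacle is b3), density of $r_X^{X_n\cup\psi^m(X_n)}$. I would split according to whether $\omega$ is quasianalytic. In the non-quasianalytic case $\mathscr{E}_{(\omega)}$ admits cutoff functions, so given a compact $K\subseteq X_n\cup\psi^m(X_n)$ and $f\in\mathscr{E}_{(\omega)}(X_n\cup\psi^m(X_n))$, I would pick $\chi\in\mathscr{E}_{(\omega)}(\R^d)$ equal to $1$ near $K$ and compactly supported in $X_n\cup\psi^m(X_n)$; the product $\chi f$, extended by zero to $X$, belongs to $\mathscr{E}_{(\omega)}(X)$ and coincides with $f$ on a neighborhood of $K$, which suffices because the topology of $\mathscr{E}_{(\omega)}(X_n\cup\psi^m(X_n))$ is generated by compact seminorms. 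In the quasianalytic case cutoffs are unavailable and one must instead invoke a Runge-type approximation theorem for the Beurling class; this is the most delicate point of the argument.

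Part b) is handled in parallel using Theorem \ref{almost characterization of mixing}. For i)$\Rightarrow$ii), mixing forces dense range, whence Proposition \ref{dense range} yields condition a) of Theorem \ref{almost characterization of mixing} i), and the conclusion iii) of that theorem supplies all the remaining necessary conditions, strong run-away included. For ii)$\Rightarrow$i), I would repeat the verification from part a) with ``some $m$'' replaced by ``all $m\geq M$'' in each of b1)--b3), which is precisely what strong run-away provides.
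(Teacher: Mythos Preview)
Your overall strategy matches the paper's: apply Theorems \ref{almost characterization of transitivity} and \ref{almost characterization of mixing} after checking $(\mathscr{F}1)$--$(\mathscr{F}4)$, use the density of polynomials in $C(\R^d)$ for the necessity direction, and verify conditions a) and b) for the sufficiency direction via the closure properties of the Beurling class. The paper cites \cite[Theorem 4]{RainerSchindl2} for the fact that $1/w\in\mathscr{E}_{(\omega)}(X)$ and that the components of $(\psi^m)^{-1}$ are again in the class; you should be explicit that these stability statements require $(\alpha_0)$ and $\omega(t)=o(t)$ and are not just folklore.

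The genuine gap is in your verification of b3). Your case split works in the non-quasianalytic case via cutoff functions, but in the quasianalytic case you only say that ``one must invoke a Runge-type approximation theorem'' and flag it as delicate without actually supplying one. The paper avoids the case distinction entirely: by \cite[Proposition 3.2]{HeinrichMeise}, $\mathscr{H}(\C^d)$ (hence in particular the holomorphic polynomials) is dense in $\mathscr{E}_{(\omega)}(Y)$ for \emph{every} open $Y\subseteq\R^d$. Since polynomials restrict from $X$, this immediately gives that $r_X^{X_n\cup\psi^m(X_n)}$ has dense range, uniformly in the quasianalytic and non-quasianalytic cases. The same result is also what the paper uses to establish separability of $\mathscr{E}_{(\omega)}(X)$, which you assert without justification. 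So the missing ingredient in your argument is precisely the Heinrich--Meise density theorem; once you plug it in, the case split becomes unnecessary and the proof is complete.
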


\begin{proof}
	By a result due to Heinrich and Meise \cite[Proposition 3.2]{HeinrichMeise} $\mathscr{H}(\C^d)$ is dense in $\mathscr{E}_{(\omega)}(X)$. In particular, (holomorphic) polynomials are dense in $\mathscr{E}_{(\omega)}(X)$ implying that the latter Fr\'echet space is separable. Because polynomials are contained in $\mathscr{E}_{(\omega)}(\R^d)$ it follows that the latter space is dense in $C(\R^d)$. By Theorem \ref{almost characterization of transitivity} it thus follows that i) implies ii) and that ii) implies iii) in part a).
	
	If iii) in a) is satisfied, it follows from the hypotheses on $\omega$ and \cite[Theorem 4]{RainerSchindl2} that $\frac{1}{w}\in\mathscr{E}_{(\omega)}(X)$ and that for any $m\in\N$ the components of the smooth function $(\psi^m)^{-1}:X\rightarrow X$ belong to $\mathscr{E}_{(\omega)}(X)$. Therefore, applying again \cite[Theorem 4]{RainerSchindl2} it follows that for every open subset $Y$ of $\psi^m(X)$ and any $f\in\mathscr{E}_{(\omega)}((\psi^m)^{-1}(Y))$ the function
	$$\tilde{f}:Y\rightarrow\K,y\mapsto\big(\frac{f}{\prod_{j=0}^{m-1} w(\psi^j(\cdot))}\big)\circ(\psi^m)^{-1}(y)$$
	belongs to $\mathscr{E}_{(\omega)}(Y)$. As detailed in Remark \ref{considering the almost characterization} iii) this implies that condition a) of part i) of Theorem \ref{almost characterization of transitivity} is satisfied. Moreover, because $\psi$ is run-away and $\mbox{det}J\psi(x)\neq 0$ for every $x\in X$ it follows that conditions b1) and b2) from part i) of Theorem \ref{almost characterization of transitivity} are fulfilled for any open, relatively compact exhaustion $(X_n)_{n\in\N}$ of $X$. Finally, applying \cite[Proposition 3.2]{HeinrichMeise} once more it follows in particular that condition b3) from part i) of Theorem \ref{almost characterization of transitivity} is satisfied, too, for an arbitrary open, relatively compact, exhaustion $(X_n)_{n\in\N}$ of $X$. Hence,  by Theorem \ref{almost characterization of transitivity}, iii) implies i) in part a).
	
	Mutatis mutandis, part b) of the theorem is again proved by applying Theorem \ref{almost characterization of mixing} instead of Theorem \ref{almost characterization of transitivity}.
\end{proof}

\begin{theorem}\label{dynamics in Roumieu}
	Let $\omega$ be a weight function satisfying $(\alpha_0)$. Moreover, let $X\subseteq\R^d$ be open, $w\in\mathscr{E}_{\{\omega\}}(X)$, and $\psi:X\rightarrow X$ be smooth such that $\psi_j\in\mathscr{E}_{\{\omega\}}(X)$ for all $1\leq j\leq d$.
	\begin{itemize}
		\item[a)] The following are equivalent.
		\begin{itemize}
			\item[i)] $C_{w,\psi}$ is weakly mixing on $\mathscr{E}_{\{\omega\}}(X)$.
			\item[ii)] $C_{w,\psi}$ is transitive on $\mathscr{E}_{\{\omega\}}(X)$.
			\item[iii)] $w$ has no zeros, $\psi$ is injective, run-away, and $\mbox{det}J\psi(x)\neq 0$ for all $x\in X$.
		\end{itemize}
		\item[b)] The following are equivalent.
		\begin{itemize}
			\item[i)] $C_{w,\psi}$ is mixing on $\mathscr{E}_{\{\omega\}}(X)$.
			\item[ii)] $w$ has no zeros, $\psi$ is injective, strong run-away, and $\mbox{det}J\psi(x)\neq 0$ for all $x\in X$.
		\end{itemize}
	\end{itemize}
\end{theorem}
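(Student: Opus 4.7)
The plan is to mirror the proof of Theorem \ref{dynamics in Beurling} essentially step by step, replacing the Beurling-specific ingredients with their Roumieu analogs; the almost characterizations in Theorem \ref{almost characterization of transitivity} and Theorem \ref{almost characterization of mixing} together with Proposition \ref{properties F1-F4 for ultradifferentiable} do the heavy lifting. Note first that under hypothesis $(\alpha_0)$ on $\omega$ the results of Rainer and Schindl (see \cite{RainerSchindl1}, \cite{RainerSchindl2}) ensure that composition with a smooth map $\psi:X\to X$ whose components lie in $\mathscr{E}_{\{\omega\}}(X)$ defines a continuous operator on $\mathscr{E}_{\{\omega\}}(X)$, so that $C_{w,\psi}$ is well-defined and acts locally on $\mathscr{E}_{\{\omega\}}(X)$.

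For part a), the implication i)$\Rightarrow$ii) is trivial. For ii)$\Rightarrow$iii) I would invoke Theorem \ref{almost characterization of transitivity}. Since polynomials belong to $\mathscr{E}_{\{\omega\}}(\R^d)$ (the condition $\log(1+t)=o(\omega(t))$ together with $\varphi^\ast(0)=0$ and the growth of $\varphi^\ast$ make the Roumieu seminorms finite on polynomials), they form a subset of $\mathscr{E}_{\{\omega\}}(\R^d)$ which is already dense in $C(\R^d)$. Hence the variant of Theorem \ref{almost characterization of transitivity} with the density hypothesis on $\mathscr{F}(\Omega)$ applies and yields iii).

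The main work lies in iii)$\Rightarrow$i). From the no-zeros condition on $w$ and $\det J\psi\neq 0$ together with the injectivity of $\psi$, and again by the Rainer--Schindl stability results, one obtains $1/w\in\mathscr{E}_{\{\omega\}}(X)$ and, for every $m\in\N$, that the components of $(\psi^m)^{-1}:\psi^m(X)\to X$ belong to $\mathscr{E}_{\{\omega\}}(\psi^m(X))$. Invoking once more the composition stability of $\mathscr{E}_{\{\omega\}}$ under $(\alpha_0)$, one concludes that for every open $Y\subseteq\psi^m(X)$ and every $f\in\mathscr{E}_{\{\omega\}}((\psi^m)^{-1}(Y))$ the function
$$\tilde f:Y\to\K,\quad y\mapsto\Bigl(\frac{f}{\prod_{j=0}^{m-1}w\circ\psi^j}\Bigr)\circ(\psi^m)^{-1}(y)$$
lies in $\mathscr{E}_{\{\omega\}}(Y)$, which together with Remark \ref{considering the almost characterization} iii) gives condition a) of Theorem \ref{almost characterization of transitivity} i). For an arbitrary open, relatively compact exhaustion $(X_n)_{n\in\N}$ of $X$, Brouwer's Invariance of Domain combined with $\det J\psi\neq 0$ yields the openness needed in b1), while run-away of $\psi$ provides b2). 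The remaining condition b3), namely density of the range of $r_X^{X_n\cup\psi^m(X_n)}$, is what I expect to be the main obstacle. I would establish it by a Roumieu-type analog of \cite[Proposition 3.2]{HeinrichMeise}: entire functions (and hence polynomials) can be made dense in $\mathscr{E}_{\{\omega\}}(Z)$ for $Z\subseteq\R^d$ open, using the explicit Cauchy-type estimates that follow from $(\alpha)$ and $(\alpha_0)$; since polynomials trivially extend to all of $X$, this forces b3). Applying Theorem \ref{almost characterization of transitivity} then closes the circle and yields i).

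For part b) the proof is mutatis mutandis the same, with run-away replaced by strong run-away and Theorem \ref{almost characterization of transitivity} replaced by Theorem \ref{almost characterization of mixing}; conditions b1)--b3) of that theorem hold for all $m\geq M$ by the same arguments, so no additional ingredient is required.
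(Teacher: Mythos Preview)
Your proposal is correct and follows essentially the same route as the paper's proof: reduce to Theorem~\ref{almost characterization of transitivity} (resp.\ Theorem~\ref{almost characterization of mixing}) via Proposition~\ref{properties F1-F4 for ultradifferentiable}, use density of polynomials in $C(\R^d)$ for ii)$\Rightarrow$iii), and verify conditions a), b1)--b3) for iii)$\Rightarrow$i) using the Rainer--Schindl stability results and Remark~\ref{considering the almost characterization}~iii).

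The one place where you overestimate the difficulty is b3). You flag it as ``the main obstacle'' and propose to establish a Roumieu-type analog of \cite[Proposition~3.2]{HeinrichMeise} yourself via Cauchy-type estimates. In fact no new work is needed: \cite[Proposition~3.2]{HeinrichMeise} already states that $\mathscr{H}(\C^d)$ is dense in $\mathscr{E}_{\{\omega\}}(Y)$ for every open $Y\subseteq\R^d$ (the result covers both the Beurling and the Roumieu setting), so b3) follows immediately by the same citation used in the Beurling case. A minor aside: for b1) Brouwer's Invariance of Domain requires only continuity and injectivity of $\psi$, not $\det J\psi\neq 0$.
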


\begin{proof}
	We first prove part a) of the theorem. Clearly, i) implies ii) and since polynomials are contained in $\mathscr{E}_{\{\omega\}}(\R^d)$ the latter space is dense in $C(\R^d)$ so that by Theorem \ref{almost characterization of transitivity}, iii) follows from ii). If iii) is satisfied, it follows as in the proof of Theorem \ref{dynamics in Beurling} from \cite[Theorem 3]{RainerSchindl2}, and Remark \ref{considering the almost characterization} iii) that condition a) in part i) of Theorem \ref{almost characterization of transitivity} is fulfilled. Condition b3) of part i) in Theorem \ref{almost characterization of transitivity} is satisfied for any open, relatively compact exhaustion $(X_n)_{n\in\N}$ because by \cite[Proposition 3.2]{HeinrichMeise} $\mathscr{H}(\C^d)$ is dense in $\mathscr{E}_{\{\omega\}}(Y)$ for every open subset $Y$ of $\R^d$. From the run-away property and the injectivity of $\psi$ together with $\mbox{det}J(x)\neq 0$ for all $x\in X$ it follows that conditions b1) and b2) from part i) of Theorem \ref{almost characterization of transitivity} are satisfied, too, so that i) follows.
	
	The proof of part b) is once more a straight forward modification of the proof of part a) involving Theorem \ref{almost characterization of mixing}.
\end{proof}

Since for the weight function $\omega(t)=t$ the spaces $\mathscr{E}_{\{\omega\}}(X)$ and $\mathscr{A}(X)$ coincide as locally convex spaces it follows that Corollary \ref{real analytic case} is a special case of Theorem \ref{dynamics in Roumieu}.

\section{Kernels of elliptic differential operators}\label{kernels of elliptic differential operators}

In this section we apply the results from section \ref{dynamical properties} to weighted composition operators defined on kernels of elliptic partial differential operators. The special case of the Cauchy-Riemann operator will give the space of holomorphic functions of a single variable equipped with the compact-open topology. In this context dynamical properties of (even a sequence of) unweighted composition operators have been studied by Bernal-Gonz\'alez, Montes-Rodr\'iguez \cite{BernalMontes}, resp.\ Gro{\ss}e-Erdmann, Mortini \cite{Grosse-ErdmannMortini}. For dynamical properties of weighted composition operators on the Fr\'echet space of holomorphic functions see also the articles \cite{YousefiRezaei} and \cite{Bes}.

The special case of the Laplace operator gives the space of harmonic functions endowed with the compact-open topology where dynamical properties of special unweighted composition operators have been studied for example in \cite{Dzagnidze}, \cite{Armitage}. The results in this section complement those from \cite{CalderonMueller}, \cite{KalmesNiess}, and \cite{KalmesNiessRansford} where hypercyclicity of special unweighted composition operators on spaces of zero solutions to linear partial differential equations with constant coefficients is considered.

As explained in example \ref{examples of sheaves} v), for a non-constant polynomial with complex coefficients in $d\geq 2$ variables $P\in\C[X_1,\ldots,X_d]$ and an open subset $X\subseteq\R^d$ we define
$$C_P^{\infty}(X):=\{u\in C^\infty(X);\,P(\partial)u=0\mbox{ in }X\},$$
where for $P(\xi)=\sum_{|\alpha|\leq m}a_\alpha\xi^\alpha$ with $a_{\alpha_0}\neq 0$ for some multiindex $\alpha_0\in\N_0^d$ with $|\alpha_0|(=\alpha_1+\ldots+\alpha_d)=m$ we define
$$\forall\,u\in C^\infty(X), x\in X:\,P(\partial)u(x)=\sum_{|\alpha|\leq m}a_\alpha\partial^\alpha u(x).$$

As a closed subspace of the separable nuclear Fr\'echet space $C^\infty(X)$ the space $C_P^\infty(X)$ is then again a separable nuclear Fr\'echet space. For hypoelliptic polynomials $P$ - by definition - for every open $X\subseteq\R^d$ the spaces $C_P^\infty(X)$ and
$$\mathscr{D}'_P(X):=\{u\in\mathscr{D}'(X);\,P(\partial)u=0\mbox{ in }X\}$$
coincide (that is, every distribution $u$ on $X$ which satisfies $P(\partial)u=0$ in $X$ is already a smooth function). By a result of Malgrange (see e.g.\ \cite[Theorem 52.1]{Treves}) the spaces $C_P^\infty(X)$ and $\mathscr{D}'_P(X)$ also coincide as locally convex spaces when the latter is endowed with the relative topology inherited from $\mathscr{D}'(X)$ equipped with the strong dual topology as the topological dual of $\mathscr{D}(X)$. This implies in particular, that for hypoelliptic polynomials the compact-open topology on $C^\infty_P(X)$ and the relative topology inherited from $C^\infty(X)$ coincide. Therefore, for hypoelliptic polynomials $P$ the space $C_P^\infty(X)$ endowed with the compact-open topology is a separable (nuclear) Fr\'echet space for every open $X\subseteq\R^d$.

As already mentioned in example \ref{examples of sheaves} v), $C_P^\infty$ defines a sheaf on $\R^d$ which satisfies $(\mathscr{F}1)$ and $(\mathscr{F}2)$ but generally $(\mathscr{F}3)$ need not hold. However, the next proposition shows that for hypoelliptic polynomials $P$ both $(\mathscr{F}3)$ and $(\mathscr{F}4)$ hold for $C_P^\infty$.

\begin{proposition}\label{f3 and f4 for kernel}
	Let $d\geq 2$ and let $P\in\C[X_1,\ldots,X_d]$ be hypoelliptic. The sheaf $C_P^\infty$ satisfies both $(\mathscr{F}3)$ and $(\mathscr{F}4)$.
\end{proposition}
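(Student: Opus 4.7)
The plan is to test both properties against the exponential functions $e_\zeta(x):=\exp\sum_{j=1}^d\zeta_jx_j$ with $\zeta$ in the characteristic variety $V(P):=\{\zeta\in\C^d:P(\zeta)=0\}$. By Example \ref{examples of sheaves} v), each such $e_\zeta$ belongs to $C_P^\infty(\R^d)$, and one has $\partial^\alpha e_\zeta(x)=\zeta^\alpha e_\zeta(x)$. The decisive external input will be H\"ormander's characterization of hypoellipticity (\cite[Theorem 11.1.1]{Hoermander}): $P$ is hypoelliptic if and only if for every $c>0$ the set $\{\zeta\in V(P):|\mathrm{Im}\,\zeta|\leq c\}$ is bounded, i.e.\ the imaginary parts on $V(P)$ must blow up as $|\zeta|\to\infty$.

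For $(\mathscr{F}3)$, given distinct $x,y\in\R^d$ I would show that the evaluation map $f\mapsto(f(x),f(y))$ from $C_P^\infty(\R^d)$ into $\K^2$ is surjective, which produces the desired $f$ at once. Arguing by contradiction, if some $(a,b)\in\K^2\setminus\{0\}$ satisfies $af(x)+bf(y)=0$ on all of $C_P^\infty(\R^d)$, then substituting $f=e_\zeta$ forces $b\neq 0$ and the nontrivial linear form $L(\zeta):=\sum_j(y_j-x_j)\zeta_j$ to map $V(P)$ into the discrete set $\log(-a/b)+2\pi i\Z$. The first key observation is algebro-geometric: a non-constant polynomial on an irreducible affine variety has cofinite (hence uncountable) image in $\C$, so $L$ must be constant on each of the finitely many irreducible components of $V(P)$. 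Hence $V(P)$ lies in a finite union of parallel hyperplanes $\{L=c_i\}$, and Hilbert's Nullstellensatz combined with unique factorization in $\C[X_1,\ldots,X_d]$ (each $L-c_i$ being linear, hence irreducible) forces $P=C\prod_i(L-c_i)^{k_i}$ for some constants. To close the argument, I would pick one such $c_i=\alpha+i\beta$, set $v:=y-x$ and $\zeta_0:=(c_i/|v|^2)v$, and consider $\zeta_0+w$ for real $w\in\R^d$ with $\sum_jw_jv_j=0$: the point stays in $\{L=c_i\}\subseteq V(P)$, has bounded imaginary part $(\beta/|v|^2)v$, and can be taken of arbitrarily large modulus because $d\geq 2$ makes the real orthogonal complement of $v$ at least one-dimensional. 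This contradicts H\"ormander's criterion.

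The proof of $(\mathscr{F}4)$ would run along exactly the same lines. Continuity of $-\partial_j\delta_x$ on $C_P^\infty(X)$ is inherited from its continuity on $C^\infty(X)$. Evaluating the functional $f\mapsto\sum_jh_j\partial_jf(x)-\lambda f(x)$ at $e_\zeta$ gives $\bigl(\sum_jh_j\zeta_j-\lambda\bigr)e^{\langle\zeta,x\rangle}$, so I need some $\zeta\in V(P)$ with $\sum_jh_j\zeta_j\neq\lambda$. If no such $\zeta$ existed, $V(P)$ would lie entirely in the single hyperplane $\{\sum_jh_j\zeta_j=\lambda\}$, and the same Nullstellensatz/unique factorization step would force $P=C\bigl(\sum_jh_j\xi_j-\lambda\bigr)^k$; replacing $v$ by $h$ in the construction above would again yield $\zeta\in V(P)$ of unbounded modulus with bounded imaginary part, contradicting hypoellipticity. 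The main obstacle I anticipate is purely organizational: recording cleanly the algebraic step that rules out $L$ being non-constant yet discrete-valued on some irreducible component. Beyond that, everything reduces to assembling the Nullstellensatz reduction with H\"ormander's characterization, and no serious analytic difficulty should arise.
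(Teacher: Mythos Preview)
Your proof is correct and takes a genuinely different route from the paper's. Both arguments ultimately feed into H\"ormander's characterization of hypoellipticity via the behaviour of imaginary parts on $V(P)$, but they reach the contradiction by very different means. The paper parametrizes part of $V(P)$ analytically: after arranging $x_d\neq y_d$ it studies the roots $\lambda_j(\xi')$ of $z\mapsto P(\xi',z)$ for $\xi'\in\R^{d-1}$, proves via Rouch\'e's theorem that $\xi'\mapsto\mathrm{Im}\,\lambda_1(\xi')$ is continuous, deduces from hypoellipticity that $|\mathrm{Im}\,\lambda_1(\xi')|\to\infty$ as $|\xi'|\to\infty$, and then exhibits explicit $\zeta_1,\zeta_2\in V(P)$ for which the function $g=e^{\langle\zeta_1,x\rangle+\langle\zeta_2,\cdot\rangle}-e^{\langle\zeta_2,x\rangle+\langle\zeta_1,\cdot\rangle}$ vanishes at $x$ but not at $y$; the same unboundedness of $\mathrm{Im}\,\lambda_1$ also dispatches $(\mathscr{F}4)$. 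Your approach is structural rather than constructive: you argue by contradiction that $V(P)$ would have to lie in finitely many parallel affine hyperplanes, invoke the Nullstellensatz and unique factorization to force $P$ to be a product of linear forms in a single direction, and then write down points of $V(P)$ with bounded imaginary part and arbitrarily large modulus. Your argument is shorter and treats $(\mathscr{F}3)$ and $(\mathscr{F}4)$ uniformly, at the price of importing some algebraic geometry (decomposition into irreducible hypersurface components, and the fact that a non-constant regular function on a positive-dimensional irreducible variety over $\C$ has cofinite image). The paper's argument is longer and more hands-on but stays entirely within elementary complex analysis.
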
 

\begin{proof}
	Fix $x,y\in\R^d$ with $x\neq y$. By renumbering the coordinates we can assume without loss of generality that $x_d-y_d\neq 0$. For $\xi'\in\R^{d-1}$ we denote by $\lambda_1(\xi'),\ldots,\lambda_{l(\xi')}(\xi')\in\C$ the pairwise distinct roots of the polynomial
	$$\C\rightarrow\C,z\mapsto P(\xi',z),$$
	ordered in such a way that $(\mbox{Im}\lambda_j(\xi'))_{1\leq j\leq l(\xi')}$ is increasing and $\mbox{Re}\lambda_j(\xi')<\mbox{Re}\lambda_{j+1}(\xi')$ whenever $\mbox{Im}\lambda_j(\xi')=\mbox{Im}\lambda_{j+1}(\xi')$.
	
	Then the mapping
	$$\mbox{Im}\lambda_1:\R^{d-1}\rightarrow\R,\xi'\mapsto\mbox{Im}\lambda_1(\xi')$$
	is continuous. Indeed, fix $\xi'_0\in\R^{d-1}$ and let $m_j$ be  the multiplicities of the $\lambda_j(\xi_0')$. Let $\varepsilon>0$. Without loss of generality we assume that $\varepsilon$ is so small that for every $j$ we have $\mbox{Im}\lambda_j(\xi_0')+\varepsilon< \mbox{Im}\lambda_{j+1}(\xi_0')-\varepsilon$ if $\mbox{Im}\lambda_j(\xi_0')<\mbox{Im}\lambda_{j+1}(\xi_0')$ and $\mbox{Re}\lambda_j(\xi_0')+\varepsilon< \mbox{Re}\lambda_{j+1}(\xi_0')-\varepsilon$ if $\mbox{Im}\lambda_j(\xi_0')=\mbox{Im}\lambda_{j+1}(\xi_0')$. Then $B(\lambda_j(\xi_0'),\varepsilon)\cap B(\lambda_k(\xi_0'),\varepsilon)=\emptyset$ for every $1\leq j,k\leq l(\xi_0'), j\neq k$. For any $1\leq j\leq l(\xi_0')$ we have by Taylor's Theorem for every $\xi'\in\R^{d-1}$ with $|\xi'-\xi_0'|<1$
	\begin{align*}
		\forall\, z\in\C, |z-\lambda_j(\xi'_0)|&=\varepsilon:|P(\xi',z)-P(\xi_0',z)|\\
		&=|\sum_{\alpha\neq 0}P^{(\alpha)}((\xi_0',0)+ze_d) \frac{(\xi'-\xi_0',0)^\alpha}{\alpha!}|\\
		&\leq\sum_{\alpha\neq 0}|P^{(\alpha)}((\xi_0',0)+ze_d)| \frac{|\xi'-\xi_0'|^{|\alpha|}}{\alpha!}\\
		&\leq |\xi'-\xi_0'|\sup_{\zeta\in\C, |\zeta-\lambda_j(\xi_0')|=\varepsilon} \sum_{\alpha\neq 0}| P^{(\alpha)}((\xi_0',0)+\zeta e_d)|\frac{1}{\alpha!}.
	\end{align*}
	Thus, if $|\xi'-\xi_0'|$ is sufficiently small the right hand side of the above inequality is less than
	\[\inf\{|P(\xi_0',\zeta)|;\,\zeta\in\C, |\zeta-\lambda_j(\xi_0')|=\varepsilon\}\big(\leq |P(\xi_0',z)|\;(|z-\lambda_j(\xi_0')|=\varepsilon)\big)\]
	for any $j$. Hence it follows from Rouch\'e's Theorem for $\xi'$ sufficiently close to $\xi_0'$, say $|\xi'-\xi_0'|<\delta$, that $P(\xi',\cdot)$ has exactly $m_j$ roots $z_{j,1}^{\xi'},\ldots z_{j,m_j}^{\xi'}$ in $B(\lambda_j(\xi_0'),\varepsilon)$ for each $1\leq j\leq l(\xi_0')$.
	
	Now set $k:=\max\{1\leq j\leq l(\xi_0');\,\mbox{Im}\lambda_1(\xi_0')=\mbox{Im}\lambda_j(\xi_0')\}$. Then for any $1\leq j\leq k, 1\leq r\leq m_j$ we have
	\[|\mbox{Im} z_{j,r}^{\xi'}-\mbox{Im}\lambda_j(\xi_0')|<\varepsilon\]
	if $|\xi'-\xi_0'|<\delta$ and according to our choice of $\varepsilon$ we have
	\begin{align*}
	&\max\{\mbox{Im} z_{j,r}^{\xi'};\,1\leq j\leq k,1\leq r\leq m_j\}\\
	&<\min\{\mbox{Im} z_{j,r}^{\xi'};\,k+1\leq j\leq k,1\leq r\leq m_j\}.
	\end{align*}
	Therefore $\mbox{Im}\lambda_1(\xi'),\ldots, \mbox{Im}\lambda_{m_1}(\xi'),\mbox{Im}\lambda_{m_1+1}(\xi'),\ldots,\mbox{Im}\lambda_{m_k}(\xi')$ all belong to
	$$(\mbox{Im}\lambda_1(\xi_0')-\varepsilon,\mbox{Im}\lambda_1(\xi_0')+\varepsilon)=\ldots=(\mbox{Im} \lambda_k(\xi_0')-\varepsilon,\mbox{Im} \lambda_k(\xi_0')+\varepsilon)$$
	so that for all $1\leq j\leq m_k$ we have $|\mbox{Im}\lambda_j(\xi')-\mbox{Im}\lambda_1(\xi_0')|<\varepsilon$ when $|\xi'-\xi_0'|<\delta$. In particular, $|\lambda_1(\xi')-\lambda_1(\xi_0')|<\varepsilon$ whenever $|\xi'-\xi_0'|<\delta$ which gives the continuity of $\mbox{Im}\lambda_1$.
	
	Denoting $V(P):=\{\zeta\in\C^d;\,P(\zeta)=0\}$ it follows that $(\xi',\lambda_1(\xi'))\in V(P)$ for each $\xi'\in\R^{d-1}$ and therefore
	\begin{equation}\label{distance inequality}
		\forall\,\xi'\in\R^{d-1}:\,\mbox{dist}\big((\xi',\mbox{Re}\lambda_1(\xi')),V(P)\big)\leq |\mbox{Im}\lambda_1(\xi')|.
	\end{equation}
	Since $P$ is hypoelliptic we have
	$$\lim_{x\in\R^d,|x|\rightarrow\infty}\mbox{dist}(x,V(P))=\infty$$
	(see \cite[Theorem 11.1.3]{Hoermander}) which combined with (\ref{distance inequality}) yields
	\begin{equation}\label{imaginary parts of zeros}
	\lim_{|\xi'|\rightarrow\infty}|\mbox{Im}\lambda_1(\xi')|=\infty.
	\end{equation}
	This implies in particular that there are $\xi',\eta'\in\R^{d-1}$ for which
	\begin{align*}
		&\langle\mbox{Im}\big((\xi',\lambda_1(\xi'))-(\eta',\lambda_1(\eta'))\big), x-y\rangle=\mbox{Im}(\lambda_1(\xi')-\lambda_1(\eta'))(x_d-y_d)\\
		&\notin\{2\pi k;\,k\in\Z\}.
	\end{align*}
	Thus, there are $\zeta_1,\zeta_2\in V(P)$ such that $\langle\mbox{Im}\,(\zeta_1-\zeta_2),x-y\rangle$ is not an integer multiple of $2\pi$. Setting $\langle\eta,v\rangle=\sum_{j=1}^d\eta_j v_j$ for $\eta, v\in\C^d$, the function
	$$g:\R^d\rightarrow\C, g(w):=\exp(\langle\zeta_1,x\rangle+\langle\zeta_2,w\rangle)-\exp(\langle\zeta_2,x\rangle+\langle\zeta_1,w\rangle)$$
	satisfies $g\in C_P^\infty(\R^d)$, $g(x)=0$, and
	$$g(y)=\exp(\langle\zeta_1,y\rangle+\langle\zeta_2,x\rangle)\Big(\exp\big(\langle\zeta_1-\zeta_2,x-y\rangle\big)-1\Big)\neq 0$$
	which implies the existence of $f\in C_P^\infty(\R^d)$ with $f(x)=0$ and $f(y)=1$. Hence, $(\mathscr{F}3)$ is satisfied.
	
	To verify $(\mathscr{F}4)$, let $X\subseteq\R^d$ be open. We first observe that $-\partial_j\delta_x, 1\leq j\leq d,$ is a continuous linear functional on $C_P^\infty(X)$. Now, let $h\in\R^d\backslash\{0\}$ and $\lambda\in\K$. By renumbering the coordinates if necessary we may assume that $h_d\neq 0$. By (\ref{imaginary parts of zeros}) it follows
	\begin{equation}\label{not zero}
		\exists\,\zeta\in V(P):\,\langle h,\mbox{Im}\zeta\rangle -\mbox{Im}\lambda\neq 0.
	\end{equation}
	Because $e_\zeta\in C_P^\infty(X)$, where $e_\zeta(x):=\exp(\langle\zeta,x\rangle)$,
	$$\sum_{j=1}^d h_j\partial_j e_\zeta(x)-\lambda e_\zeta(x)=\Big(\langle h,\mbox{Re}\zeta\rangle-\mbox{Re}\lambda+i(\langle h,\mbox{Im}\zeta\rangle-\mbox{Im}\lambda)\Big)e_\zeta(x)$$
	where the factor
	$$\Big(\langle h,\mbox{Re}\zeta\rangle-\mbox{Re}\lambda+i(\langle h,\mbox{Im}\zeta\rangle-\mbox{Im}\lambda)\Big)$$
	does not vanish by (\ref{not zero}). Therefore the continuous linear functional
	$$u\mapsto \sum_{j=1}^d h_j\partial_j u-\lambda u$$
	on $C_P^\infty(X)$ does not vanish identically so that $(\mathscr{F}4)$ is fulfilled.
\end{proof}

For us, elliptic polynomials will be of particular interest. Recall that a polynomial $P\in\C[X_1,\ldots,X_d]$, $P(\xi)=\sum_{|\alpha|\leq m}a_\alpha\xi^\alpha$ is called elliptic if
$$\forall\,\xi\in\R^d\backslash\{0\};\,P_m(\xi)\neq 0,$$
where $P_m(\xi)=\sum_{|\alpha|=m}a_\alpha \xi^\alpha$ denotes the principal part of $P$. As is well-known, elliptic polynomials are hypoelliptic (see e.g.\ \cite[Theorem 11.1.10]{Hoermander}). In particular, identifying $\C$ as usual with $\R^2$, and choosing for $P\in\C[X_1,X_2]$ the polynomial $P(\xi_1,\xi_2)=\frac{1}{2}(\xi_1+i \xi_2)$ gives the Cauchy-Riemann operator $\partial_{\bar{z}}$ and we have that $C_P^\infty(X)=\mathscr{H}(X)$ holds as locally convex spaces for any open $X\subseteq\C$ so that the sheaf of holomorphic functions (equipped with the compact-open topology) on open subsets of $\C$ is a special case.

Arguably the most prominent elliptic differential operator, apart from the Cauchy-Riemann operator is the Laplace operator. Thus, the sheaf of harmonic functions (equipped with the compact-open topology) on open subsets of $\R^d$ is also a special case of the sheaves $C_P^\infty$.\\

We are now going to characterize when for an elliptic polynomial $P$ and an open $X\subseteq\R^d$ a well-defined weighted composition operator $C_{w,\psi}$ on $C_P^\infty(X)$ is weakly mixing. As follows in particular from the results obtained in \cite{Grosse-ErdmannMortini} an unweighted composition operator cannot be hypercyclic on $\mathscr{H}(X)$ if $X$ is a finitely connected but not simply connected domain. Thus, the special case of the Cauchy-Riemann operator shows that topological properties of $X$ have to be taken into account.  

\begin{theorem}\label{dynamics on elliptic kernels}
	Let $P$ be an elliptic polynomial and let $X\subseteq\R^d$ be open and homeo\-morphic to $\R^d$. Moreover, let $w:X\rightarrow\C$ and $\psi:X\rightarrow X$ be smooth such that $C_{w,\psi}$ is well-defined on $C_P^\infty(X)$ and acts locally on $C_P^\infty(X)$.
	\begin{itemize}
		\item[a)] The following are equivalent.
		\begin{itemize}
			\item[i)] $C_{w,\psi}$ is weakly mixing on $C_P^\infty(X)$.
			\item[ii)] $C_{w,\psi}$ has dense range, $w$ has no zeros, and $\psi$ is injective and run-away.
			\item[iii)] $w$ has no zeros, $\psi$ is injective and run-away, and for each $m\in\N_0$ and every open, relatively compact $Y\subseteq\psi^m(X)$ it holds
			$$r_X^{(\psi^m)^{-1}(Y)}(C_P^\infty(X))\subseteq\overline{\big(C_{w,\psi,(\psi^{m-1})^{-1}(Y)}\circ\ldots\circ C_{w,\psi,Y}\big)(C_P^\infty(Y))},$$
			where the closure is taken in $C_P^\infty((\psi^m)^{-1}(Y))$.
		\end{itemize}
		Moreover, $\mbox{det}J\psi(x)\neq 0$ for all $x\in X$ can be added to ii) and iii). If additionally $|w(x)|\leq 1$ for all $x\in X$ then the above are equivalent to
		\begin{itemize}
			\item[iv)] $C_{w,\psi}$ is hypercyclic on $C_P^\infty(X)$.
		\end{itemize}
		\item[b)] The following are equivalent.
		\begin{itemize}
			\item[i)] $C_{w,\psi}$ is mixing on $C_P^\infty(X)$.
			\item[ii)] $C_{w,\psi}$ has dense range, $w$ has no zeros, and $\psi$ is injective and strong run-away.
			\item[iii)] $w$ has no zeros, $\psi$ is injective and strong run-away, and for each $m\in\N_0$ and every open, relatively compact $Y\subseteq\psi^m(X)$ it holds
			$$r_X^{(\psi^m)^{-1}(Y)}(C_P^\infty(X))\subseteq\overline{\big(C_{w,\psi,(\psi^{m-1})^{-1}(Y)}\circ\ldots\circ C_{w,\psi,Y}\big)(C_P^\infty(Y))},$$
			where the closure is taken in $C_P^\infty((\psi^m)^{-1}(Y))$.
		\end{itemize}
		Moreover, $\mbox{det}J\psi(x)\neq 0$ for all $x\in X$ can be added to ii) and iii).
	\end{itemize}
\end{theorem}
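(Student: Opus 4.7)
The plan is to apply Theorem \ref{almost characterization of transitivity} to part (a) and Theorem \ref{almost characterization of mixing} to part (b); these theorems apply because elliptic polynomials are hypoelliptic, so by Proposition \ref{f3 and f4 for kernel} the sheaf $C_P^\infty$ satisfies $(\mathscr{F}1)$--$(\mathscr{F}4)$ (we tacitly assume $d\ge 2$, since the kernel is finite-dimensional when $d=1$ and the statement is then vacuous). For part (a), $i)\Rightarrow ii)$ is direct: weak mixing yields transitivity (hence dense range), and Proposition \ref{necessary transitive} provides the zero-freeness of $w$, the injectivity and run-away of $\psi$ (the latter via v-2) applied to weak mixing), and the non-vanishing Jacobian (via $(\mathscr{F}4)$ and vi) of that proposition). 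The step $ii)\Rightarrow iii)$ is Proposition \ref{dense range} applied to $C_{w,\psi}$.

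The principal work lies in $iii)\Rightarrow i)$. Condition iii) is exactly condition a) in assumption i) of Theorem \ref{almost characterization of transitivity}, so it remains to exhibit an open, relatively compact exhaustion $(X_n)$ of $X$ verifying b1)--b3). Fix a homeomorphism $h:X\to\R^d$ and set $X_n:=h^{-1}(B(0,n))$. The run-away property applied to $\overline{X_n}$ produces $m\in\N$ with $\psi^m(\overline{X_n})\cap\overline{X_n}=\emptyset$, which is b2); and Brouwer's Invariance of Domain Theorem (applicable since $\psi$ is a continuous injection on the open set $X\subseteq\R^d$) makes $\psi^m(X_n)$ open, which is b1). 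The main obstacle is b3), namely the density of $r_X^{X_n\cup\psi^m(X_n)}$. This step requires a Runge-type approximation theorem for kernels of constant-coefficient elliptic differential operators of Lax--Malgrange--Browder type; the hypothesis $X\cong\R^d$ enters here to ensure that, for a suitable choice of exhaustion, the disjoint union $X_n\cup\psi^m(X_n)$ of two topological balls has complement in $X$ with no compact connected component, which is precisely the topological requirement under which the approximation theorem yields density.

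The extra equivalence with iv) in (a) under $|w|\le 1$ follows because $C_P^\infty(X)$ is a separable Fr\'echet space, so Birkhoff's Transitivity Criterion identifies hypercyclicity with transitivity; then transitivity combined with $|w|\le 1$ delivers ii) via Proposition \ref{necessary transitive} (this time using v-1) for the run-away property), while weak mixing trivially implies hypercyclicity. Part (b) is proved by the same strategy with Theorem \ref{almost characterization of mixing} replacing Theorem \ref{almost characterization of transitivity} and strong run-away replacing run-away: in b2) of the mixing theorem one needs $\psi^m(\overline{X_n})\cap\overline{X_n}=\emptyset$ for every sufficiently large $m$, which is exactly strong run-away; the Runge-type verification of b3) carries over unchanged since it is purely topological and independent of which $m$ is chosen.
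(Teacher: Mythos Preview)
Your proposal is correct and follows essentially the same route as the paper. The one place where you are brisker than the paper is the claim that $X\setminus(X_n\cup\psi^m(X_n))$ has no compact connected component: the paper devotes a version of the Jordan--Brouwer Separation Theorem (proved via Alexander Duality, Theorem~\ref{Version of the Jordan-Brouwer}) together with Propositions~\ref{obvious topological facts} and~\ref{preparation Lax-Malgrange} to showing that the complement in $\R^d$ ($d\ge 2$) of two disjoint closed topological balls is connected, so that the Lax--Malgrange hypothesis is met. This step is not routine---knowing that the complement of a homeomorphic copy of $B[0,n]$ in $\R^d$ is connected is precisely Jordan--Brouwer strength---so your phrase ``two topological balls has complement in $X$ with no compact connected component'' needs a supporting argument or reference rather than being asserted.
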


For the proof of Theorem \ref{dynamics on elliptic kernels} some preparations have to be made. Before providing these let us mention that in case of $\C=\R^2$ it follows from the Riemann Mapping Theorem that every simply connected, connected, open $X\subseteq\C$ different from $\C$ is in particular homeomorphic to the open unit disc in $\C$ which itself is homeomorphic to $\C$. Thus, in case of $d=2$ the topological hypothesis on $X$ in Theorem \ref{dynamics on elliptic kernels} means precisely that $X$ is a simply connected domain.

In order to prove Theorem \ref{dynamics on elliptic kernels} we need the following version of the celebrated Jordan-Brouwer Separation Theorem that can be found in \cite[Satz 5.23]{Mayer}. Since this reference is written in German and since we could not find a different reference we include a proof here - which is different from the one presented in \cite{Mayer} - for the reader's convenience.

\begin{theorem}\label{Version of the Jordan-Brouwer}{\rm\textbf{(Version of Jordan-Brouwer Separation Theorem)}} Let $K_1$ and $K_2$ be homeomorphic compact subsets of $\R^d$. Then $\R^d\backslash K_1$ and $\R^d\backslash K_2$ have the same number of connected components.
\end{theorem}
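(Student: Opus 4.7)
The plan is to reduce the statement to the one-point compactification $S^d=\R^d\cup\{\infty\}$ and then apply Alexander duality, using only the topological invariance of Čech cohomology.

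First I would pass from $\R^d$ to $S^d$. Since $K_i$ is compact in $\R^d$, there is a ball $B\supseteq K_i$, so $\R^d\setminus K_i$ contains the connected set $\R^d\setminus\overline{B}$ and therefore has exactly one unbounded connected component $U_i$. In the one-point compactification, every open neighborhood of $\infty$ contains the complement of some compact set and in particular meets $U_i$; hence $U_i\cup\{\infty\}$ is a connected open subset of $S^d\setminus K_i$, while the bounded components of $\R^d\setminus K_i$ are unaffected by adding $\infty$. It follows that $\R^d\setminus K_i$ and $S^d\setminus K_i$ have the same number of connected components, so it suffices to prove that homeomorphic compact subsets $K_1,K_2\subseteq S^d$ yield complements in $S^d$ with the same number of connected components.

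Next I would invoke Alexander duality in the sphere: for every compact subset $K\subseteq S^d$ there is a natural isomorphism
$$\tilde{H}_0(S^d\setminus K;\Z)\cong \check{H}^{d-1}(K;\Z),$$
where $\tilde{H}_0$ denotes reduced singular homology and $\check{H}^{\bullet}$ denotes Čech cohomology (see e.g.\ Spanier, \emph{Algebraic Topology}, Ch.~6, or Hatcher, \emph{Algebraic Topology}, Thm.~3.44). The rank of $\tilde{H}_0(S^d\setminus K;\Z)$ equals the number of connected components of $S^d\setminus K$ minus one. Since any homeomorphism $K_1\to K_2$ induces an isomorphism of Čech cohomology groups $\check{H}^{d-1}(K_1;\Z)\cong \check{H}^{d-1}(K_2;\Z)$, the left-hand sides for $K_1$ and $K_2$ must also be isomorphic, yielding the claimed equality of component counts.

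The only real subtlety is the choice of cohomology theory on the algebraic side. For general (possibly wild) compacta in $S^d$ one cannot replace Čech by singular cohomology in the duality statement, and singular cohomology of such compacta need not behave well; this is why I would explicitly cite Alexander duality in its Čech formulation. Beyond that, no analytic input is required and the argument reduces to quoting the classical duality theorem.
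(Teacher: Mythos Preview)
Your proof is correct and follows essentially the same route as the paper: reduce to compact subsets of $S^d$ and apply Alexander duality (Hatcher, Theorem~3.44) to identify $\tilde{H}_0$ of the complement with $(d-1)$-st cohomology of the compact set, which is a homeomorphism invariant. The only cosmetic difference is that the paper adjoins the north pole to $\phi(K_i)$ so that the complement in $S^d$ is literally $\R^d\setminus K_i$, whereas you keep $K_i$ as is and argue separately that adding $\infty$ merely enlarges the unique unbounded component; your explicit use of \v{C}ech cohomology is in fact a bit more careful than the paper's citation, since Hatcher's singular version assumes local contractibility.
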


\begin{proof}
	For a topological space $X$ we denote as usual the $n$-th reduced homology group, respectively cohomology group, with coefficients in $\Z$ by $\tilde{H}_n(X,\Z)$ respectively $\tilde{H}^n(X,\Z)$. Moreover, let $S^d$ be the unit sphere in $\R^{d+1}$, $N:=(1,0,\ldots,0)\in S^d$ be the "north pole" and let $\phi:\R^d\rightarrow S^d\backslash\{N\}$ be a homeomorphism. Since $K_1$ and $K_2$ are homeomorphic it follows that the compact subsets $\{N\}\dot{\cup}\phi(K_1)$ and $\{N\}\dot{\cup}\phi(K_2)$ of $S^d$ are homeomorphic. Thus, the groups $\tilde{H}^{d-1}(\{N\}\dot{\cup}\phi(K_1),\Z)$ and $\tilde{H}^{d-1}(\{N\}\dot{\cup}\phi(K_2),\Z)$ are isomorphic. Using Alexander Duality (see \cite[Theorem 3.44]{Hatcher}) it follows that the groups $\tilde{H}_0(S^d\backslash(\{N\}\dot{\cup}\phi(K_1)),\Z)$ and $\tilde{H}_0(S^d\backslash(\{N\}\dot{\cup}\phi(K_2)),\Z)$ are isomorphic and therefore, the same is true for the groups $\tilde{H}_0(\R^d\backslash K_1,\Z)$ and $\tilde{H}_0(\R^d\backslash K_2,\Z)$. Hence, the groups $\tilde{H}_0(\R^d\backslash K_1,\Z)\oplus\Z$ and $\tilde{H}_0(\R^d\backslash K_2,\Z)\oplus\Z$ are isomorphic as well. Since for any topological space $X$ the groups $\tilde{H}_0(X,\Z)\oplus\Z$ and $H_0(X,\Z)$, the homology group of degree zero of $X$ are isomorphic (see e.g.\ \cite[page 110]{Hatcher}) and since $H_0(X,\Z)$ is isomorphic to $\oplus_{\alpha\in C(X)}\Z$, where $C(X)$ is the set of all pathwise connected components of $X$, it follows that $\R^d\backslash K_1$ and $\R^d\backslash K_2$ have the same number of pathwise connected components. Since $\R^d\backslash K_j$ are open in $\R^d$ and thus locally pathwise connected it follows that $\R^d\backslash K_1$ and $\R^d\backslash K_2$ have indeed the same number of connected components.
\end{proof}

\begin{proposition}\label{obvious topological facts}
	Denoting for $x\in\R^d$ and $\varepsilon>0$ the open, resp.\ closed ball about $x$ with radius $\varepsilon$ by $B(x,\varepsilon)$ and $B[x,\varepsilon]$, respectively, for every continuous and injective $\psi:\R^d\rightarrow\R^d$ the following hold.
	\begin{itemize}
		\item[i)] $\forall\,n\in\N:\,\overline{\R^d\backslash \psi(B[0,n])}=\R^d\backslash\psi(B(0,n)),\,\overline{\R^d\backslash B[0,n]}=\R^d\backslash B(0,n).$
		\item[ii)] If $d\geq 2$ then $\R^d\backslash\psi(B[0,n])$ and $\R^d\backslash\psi(B(0,n))$ are connected for every $n\in\N$.
		\item[iii)] If $d\geq 2$ and $n\in\N$ then $\R^d\backslash\big(\psi(B(0,n))\cup B(0,n)\big)$ is connected whenever $\psi(B[0,n])\cap B[0,n]=\emptyset$.
	\end{itemize}
\end{proposition}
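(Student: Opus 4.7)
The plan is to handle the three parts in order, using Brouwer's invariance of domain to turn the injectivity of $\psi$ into openness, and then relying on Theorem \ref{Version of the Jordan-Brouwer} to deduce connectivity of complements from a model case. The second equation in (i) is trivial and is only recorded for convenient reference; the real work is the first equation and the subsequent parts.

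For (i), I would first note that $\psi$ is an open map: for $d\geq 2$ this is Brouwer's invariance of domain, while for $d=1$ continuous injective maps are strictly monotone, hence open. Consequently $\psi(B(0,n))$ is open. The inclusion $\overline{\R^d\setminus\psi(B[0,n])}\subseteq\R^d\setminus\psi(B(0,n))$ follows because any $x\in\psi(B(0,n))$ has an open neighborhood $\psi(B(0,n))\subseteq\psi(B[0,n])$ that misses the set being closed up. For the reverse inclusion, take $x\in\R^d\setminus\psi(B(0,n))$; if $x\notin\psi(B[0,n])$ there is nothing to show, and otherwise $x=\psi(y)$ with $|y|=n$, whereupon any sequence $z_k\to y$ with $|z_k|>n$ gives $\psi(z_k)\to x$ with $\psi(z_k)\notin\psi(B[0,n])$ by the injectivity of $\psi$.

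For (ii), the restriction $\psi\vert_{B[0,n]}$ is a continuous injection from a compact set into a Hausdorff space, hence a homeomorphism onto its image. Therefore $\psi(B[0,n])$ is homeomorphic to $B[0,n]$, and since $\R^d\setminus B[0,n]=\{x\in\R^d:|x|>n\}$ is connected for $d\geq 2$, Theorem \ref{Version of the Jordan-Brouwer} forces $\R^d\setminus\psi(B[0,n])$ to be connected as well. Part (i) then identifies $\R^d\setminus\psi(B(0,n))$ as the closure of this connected set, hence also connected.

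For (iii), under the hypothesis $\psi(B[0,n])\cap B[0,n]=\emptyset$ the compact set $K_1:=\psi(B[0,n])\cup B[0,n]$ is the topological disjoint union of two sets, each homeomorphic to $B[0,n]$. I would compare it to the explicit model $K_2:=B[-3ne_1,n]\cup B[3ne_1,n]$, which is manifestly homeomorphic to $K_1$. The complement $\R^d\setminus K_2$ is connected for $d\geq 2$: for each point one can travel radially outward to the exterior of a sufficiently large ball containing $K_2$, and that exterior is connected since it is homeomorphic to $S^{d-1}\times(1,\infty)$. Applying Theorem \ref{Version of the Jordan-Brouwer} gives that $\R^d\setminus K_1$ is connected. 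Finally, a case analysis exactly parallel to the one in (i) - distinguishing the cases $x\in\psi(B[0,n])\setminus\psi(B(0,n))$ (where the disjointness from $B[0,n]$ gives room to perturb outside $\psi(B[0,n])$ while staying outside $B[0,n]$) and $x\in B[0,n]\setminus B(0,n)$ (symmetric) - shows that $\R^d\setminus(\psi(B(0,n))\cup B(0,n))$ is the closure of $\R^d\setminus K_1$, hence connected. The main obstacle is the careful bookkeeping in this last closure argument, together with the discrete verification that the model complement $\R^d\setminus K_2$ is connected; everything else reduces cleanly to invariance of domain and the version of Jordan-Brouwer already stated in the paper.
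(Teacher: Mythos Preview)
Your arguments for (i) and (ii) are essentially those of the paper: openness of $\psi$ via invariance of domain to identify $\operatorname{int}\psi(B[0,n])=\psi(B(0,n))$, compactness to get a homeomorphism $B[0,n]\cong\psi(B[0,n])$, and then Theorem~\ref{Version of the Jordan-Brouwer} plus the closure identity from (i).

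For (iii) you take a genuinely different route. The paper does \emph{not} invoke Theorem~\ref{Version of the Jordan-Brouwer} a second time; instead it shows directly that $\R^d\setminus(\psi(B[0,n])\cup B(0,n))$ is path-connected: given two points, part (ii) supplies a path in $\R^d\setminus\psi(B[0,n])$, and if this path meets $B(0,n)$ one replaces the offending segment by an arc on the sphere $\partial B(0,n)$ (connected since $d\geq 2$), which stays disjoint from $\psi(B[0,n])$ by hypothesis. Your approach---building a concrete model $K_2$ of two disjoint closed balls, checking its complement is connected, and transferring via Theorem~\ref{Version of the Jordan-Brouwer}---is perfectly valid and arguably more uniform, since it reuses the same machinery rather than introducing an ad hoc path surgery. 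The paper's argument, on the other hand, is entirely elementary once (ii) is in hand and avoids having to verify connectivity of the model complement.

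One small gap: your ``radially outward'' verification that $\R^d\setminus K_2$ is connected does not work as stated. The ray from the origin through, say, $x=\tfrac{3n}{2}e_1$ passes straight through $B[3ne_1,n]$. A correct replacement is to move in the direction $\operatorname{sgn}(x_2)e_2$ (or any direction orthogonal to $e_1$, available since $d\geq 2$): this only increases the distance to both centers $\pm 3ne_1$, so one escapes to a region $\{|x_2|>n\}$ disjoint from $K_2$, and the two half-spaces $\{x_2>n\}$ and $\{x_2<-n\}$ are easily joined far from $K_2$. With this fix your argument goes through.
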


\begin{proof}
	Denoting the interior of $A\subseteq\R^d$ by $\mbox{int}(A)$ we have $\R^d\backslash\mbox{int}(A)=\overline{\R^d\backslash A}$ for any $A\subseteq\R^d$. 
	Thus, $\overline{\R^d\backslash\psi(B[0,n])}=\R^d\backslash\mbox{int}(\psi(B[0,n]))$. Since $\psi$ is continuous and injective it follows from Brouwer's Invariance of Domain Theorem that $\psi(B(0,n))$ is open in $\R^d$. Thus, $\psi(B(0,n))\subseteq\mbox{int}(\psi(B[0,n]))$. On the other hand, for $x$ in the interior of $\psi(B[0,n])$ there is $\delta>0$ 
	$$\psi^{-1}(B(x,\delta))\subseteq\psi^{-1}\big(\psi(B[0,n])\big)=B[0,n],$$
	where we used the injectivity of $\psi$. Since $\psi^{-1}(B(x,\delta))$ is open in $\R^d$, we conclude $\psi^{-1}(B(x,\delta))\subseteq B(0,n)$. From
	$$B(x,\delta)\subseteq\psi(B[0,n])\subseteq\psi(\R^d)$$
	and the injectivity of $\psi$ we get
	$$B(x,\delta)=\psi\big(\psi^{-1}(B(x,\delta))\big)\subseteq\psi(B(0,n)).$$
	Since $x\in\mbox{int}(\psi(B[0,n]))$ was chosen arbitrarily it follows $\mbox{int}(\psi(B[0,n]))\subseteq\psi(B(0,n))$ so that $\mbox{int}(\psi(B[0,n]))=\psi(B(0,n))$ which proves i).
	
	In order to prove ii), we define for $n\in\N$
	$$\psi_n:B[0,n]\rightarrow \psi(B[0,n]),x\mapsto \psi(x)$$
	which is a continuous bijection, thus a homeomorphism due to the compactness of $B[0,n]$. $\R^d\backslash B[0,n]$ is connected because $d\geq 2$ so that by Theorem \ref{Version of the Jordan-Brouwer} the same is true for $\R^d\backslash\psi(B[0,n])$. Therefore, using i), it follows that
	$$\overline{\R^d\backslash\psi(B[0,n])}=\R^d\backslash \psi(B(0,n))$$
	is connected, too, which proves ii).
	
	In order to show iii), we first observe that
	$$\overline{\R^d\backslash\big(\psi(B[0,n])\cup B(0,n)\big)}=\R^d\backslash\mbox{int}\big(\psi(B[0,n])\cup B(0,n)\big).$$
	Clearly,
	$$\mbox{int}(\psi(B[0,n])\cup B(0,n))\supseteq\mbox{int}(\psi(B[0,n]))\cup B(0,n)$$
	and because $\psi(B[0,n])$ and $B[0,n]$ are disjoint closed sets we also have
	$$\mbox{int}(\psi(B[0,n])\cup B(0,n))\subseteq\mbox{int}\big(\psi(B[0,n])\big)\cup B(0,n)$$
	which combined with $\mbox{int}(\psi(B[0,n]))=\psi(B(0,n))$ gives
	$$\overline{\R^d\backslash\big(\psi(B[0,n])\cup B(0,n)\big)}=\R^d\backslash\big(\psi(B(0,n))\cup B(0,n)\big).$$
	Because the closure of a connected set is connected, it suffices to show the connectedness of the set $\R^d\backslash\big(\psi(B[0,n])\cup B(0,n)\big).$ Let $x,y$ be in the complement of $\psi(B[0,n])\cup B(0,n)$. By ii), $\R^d\backslash\psi(B[0,n])$ is connected. Because open, connected subsets of $\R^d$ are pathwise connected, there is a continuous $\gamma:[0,1]\rightarrow\R^d\backslash\psi(B[0,n])$ with $\gamma(0)=x$ and $\gamma(1)=y$. Without loss of generality, we can assume that $\gamma([0,1])$ does not intersect $B(0,n)$ because otherwise let
	$$t_0:=\inf\{t\in[0,1];\,\gamma(t)\in B(0,n)\},\; t_1:=\sup\{t\in[0,1];\,\gamma(t)\in B(0,n)\}.$$
	Then $0<t_0\leq t_1<1$ and $\gamma(t_j)\in\partial B(0,n), j=0,1$. Since $d\geq 2$ the set $\partial B(0,n)$ is pathwise connected so there is a continuous $\alpha:[t_0,t_1]\rightarrow\partial B(0,n)$ such that $\alpha(t_j)=\gamma(t_j), j=0,1$. Then
	$$\tilde{\gamma}:[0,1]\rightarrow\R^d\backslash(\psi(B[0,n]))\cup B(0,n)),t\mapsto\begin{cases}
	\gamma(t), &t\notin[t_0,t_1]\\ \alpha(t), &t\in[t_0,t_1]
	\end{cases} $$
	is a well-defined continuous mapping with $\tilde{\gamma}(0)=x$ and $\tilde{\gamma}(1)=y$.
	
	This shows that $\R^d\backslash\big(\psi(B[0,n])\cup B(0,n)\big)$ is pathwise connected, a fortiori connected which proves iii). 
\end{proof}

\begin{proposition}\label{preparation Lax-Malgrange}
	Let $d\geq 2$ and $X\subseteq\R^d$ be open and homeomorphic to $\R^d$ as well as $\psi:X\rightarrow X$ be continuous, injective, and run-away. Then there is a relatively compact-open exhaustion $(X_n)_{n\in\N}$ of $X$ such that
	$$\forall\,n\in\N\,\exists\,m\in\N:\,X_n\cap\psi^m(X_n)=\emptyset$$
	and if $m,n\in\N$ are such that $X_n$ and $\psi^m(X_n)$ are disjoint, if $X\backslash(X_n\cup\psi^m(X_n))=F\dot{\cup}K$ where $F$ is (relatively) closed in $X$ and $K\subseteq X$ is compact, then $K=\emptyset$. 
\end{proposition}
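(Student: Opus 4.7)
My plan is to fix a homeomorphism $\phi \colon \R^d \to X$ (which exists by hypothesis) and set $X_n := \phi(B(0,n))$. This yields a relatively compact open exhaustion of $X$ with $\overline{X_n} = \phi(B[0,n])$ compact, and the first of the two requested properties will be immediate from the run-away hypothesis applied to $\overline{X_n}$: some $\psi^m$ disjoins $\overline{X_n}$ from itself, and hence in particular disjoins $X_n$ from $\psi^m(X_n)$.

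For the second property I transport the situation to $\R^d$ via $\phi$. Setting $\tilde\psi := \phi^{-1}\circ\psi\circ\phi \colon \R^d \to \R^d$, which is continuous and injective, the hypothesis $X_n \cap \psi^m(X_n) = \emptyset$ translates to $B(0,n) \cap \tilde\psi^m(B(0,n)) = \emptyset$, and a decomposition $X \setminus (X_n \cup \psi^m(X_n)) = F \dot{\cup} K$ translates to $\R^d \setminus (B(0,n) \cup \tilde\psi^m(B(0,n))) = F' \dot{\cup} K'$ with $F'$ closed in $\R^d$ and $K'$ compact; the goal reduces to showing $K' = \emptyset$.

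The core step will be to show that $\tilde A := \R^d \setminus (B(0,n) \cup \tilde\psi^m(B(0,n)))$, together with the point at infinity, is connected in the one-point compactification $S^d$ of $\R^d$. For every $r \in (0,n)$ I have $B[0,r] \subset B(0,n)$ and therefore, by continuity of $\tilde\psi^m$, also $\tilde\psi^m(B[0,r]) \subset \tilde\psi^m(B(0,n))$; so the closed balls $B[0,r]$ and $\tilde\psi^m(B[0,r])$ lie inside disjoint open sets and are themselves disjoint. Proposition \ref{obvious topological facts}~iii), applied to $\tilde\psi^m$ at radius $r$, then yields that $L_r := \R^d \setminus (B(0,r) \cup \tilde\psi^m(B(0,r)))$ is connected, closed in $\R^d$, and unbounded. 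The sets $L_r^\ast := L_r \cup \{\infty\}$ form a nested decreasing family of nonempty, closed, connected subsets of the compact Hausdorff space $S^d$, and a direct computation yields $\bigcap_{r \in (0,n)} L_r^\ast = \tilde A \cup \{\infty\}$. Invoking the classical fact that a nested intersection of closed connected subsets of a compact Hausdorff space is connected, I conclude that $\tilde A \cup \{\infty\}$ is connected.

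Finally, the partition $\tilde A \cup \{\infty\} = (F' \cup \{\infty\}) \dot{\cup} K'$ displays a connected set as a disjoint union of two closed subsets of $S^d$ (the first closed in $S^d$ because $F'$ is closed in $\R^d$, the second closed because $K'$ is compact); since $F' \cup \{\infty\}$ is nonempty, this forces $K' = \emptyset$, which is the desired conclusion. The main delicate point I expect is exactly the case where the closed balls $B[0,n]$ and $\tilde\psi^m(B[0,n])$ touch on their boundaries while the open balls remain disjoint, since Proposition \ref{obvious topological facts}~iii) is then not directly applicable at radius $n$; the shrinking trick $r \nearrow n$ combined with the one-point compactification is precisely what bypasses this obstacle.
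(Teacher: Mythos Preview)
Your argument is correct and follows the same overall route as the paper: transport the problem to $\R^d$ via the homeomorphism, invoke Proposition~\ref{obvious topological facts} to obtain connectedness of the complement of the two ``balls'', and then use that connectedness together with unboundedness to force $K=\emptyset$.

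The one point where you diverge is worth noting. The paper applies Proposition~\ref{obvious topological facts}~iii) directly at radius $n$, which literally requires the \emph{closed} balls $B[0,n]$ and $\tilde\psi^m(B[0,n])$ to be disjoint, whereas the proposition as stated only assumes the open sets $X_n$ and $\psi^m(X_n)$ are disjoint. (For the application in Theorem~\ref{dynamics on elliptic kernels} this is harmless, since there $m$ is obtained from the run-away property applied to the compact set $\overline{X_n}$, giving the closed-ball disjointness for free.) You spotted this gap in the literal statement and closed it with the shrinking trick $r\nearrow n$ plus passage to the one-point compactification: for each $r<n$ the closed balls at radius $r$ sit inside the disjoint open sets at radius $n$, so Proposition~\ref{obvious topological facts}~iii) applies, the sets $L_r\cup\{\infty\}$ form a decreasing chain of compact connected subsets of $S^d$, and their intersection $\tilde A\cup\{\infty\}$ is connected. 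The final clopen-decomposition argument in $S^d$ then replaces the paper's direct appeal to connectedness and unboundedness in $\R^d$. Both conclusions are the same; your version is simply more careful about the hypothesis actually available.
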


\begin{proof}
	We first assume that $X=\R^d$. From the hypothesis it follows that for each $n\in\N$ there is $m\in\N$ such that $B(0,n)$ and $\psi^m(B(0,n))$ are disjoint. Applying Proposition \ref{obvious topological facts} to $\psi^m$ we obtain that $\R^d\backslash\big(B(0,n)\cup\psi^m(B(0,n))\big)$ is connected. In particular, for every closed set $F\subseteq\R^d$ and each compact subset $K$ of $\R^d\backslash F$ we have
	$$\R^d\backslash\big(B(0,n)\cup\psi^m(B(0,n))\big)=F\dot{\cup}K\Rightarrow K=\emptyset.$$
	Thus, for $X=\R^d$ we can choose $X_n:=B(0,n), n\in\N$.
	
	Now let $X\subseteq\R^d$ be an arbitrary open subset homeomorphic to $\R^d$ via $\Phi:X\rightarrow\R^d$. Then $\Phi\circ\psi\circ\Phi^{-1}$ is a continuous, injective mapping on $\R^d$ with the run-away property. For $n\in\N$ let $X_n:=\Phi^{-1}(B(0,n))$ so that $(X_n)_{n\in\N}$ is an open, relatively compact exhaustion of $X$. Let $n,m\in\N$ be such that $X_n$ and $\psi^m(X_n)$ are disjoint and let $F\subseteq X$ be relatively closed and $K\subseteq X$ be compact such that
	$$F\dot{\cup}K=X\backslash\big(X_n\cup\psi^m(X_n)\big)=\Phi^{-1}\Big(\R^d\backslash\big(B(0,n)\cup(\Phi\circ\psi\circ\Phi)^m(B(0,n))\big)\Big).$$
	Since $\Phi(F)$ is a closed subset of $\R^d$ and $\Phi(K)$ is compact it follows together with
	$$\Phi(F)\dot{\cup}\Phi(K)=\R^d\backslash\big(B(0,n)\cup(\Phi\circ\psi\circ\Phi)^m(B(0,n))\big)$$
	and the case of $X=\R^d$ applied to $\Phi\circ\psi\circ\Phi^{-1}$ that $\Phi(K)=\emptyset$ hence $K=\emptyset$.
\end{proof}

\textit{Proof of Theorem \ref{dynamics on elliptic kernels}.} Since $P$ is elliptic the sheaf $C_P^\infty$ satisfies $(\mathscr{F}1)-(\mathscr{F}4)$ by Proposition \ref{f3 and f4 for kernel}. We can therefore invoke Theorem \ref{almost characterization of transitivity} in order to prove part a). If i) holds, i.e.\ if $C_{w,\psi}$ is weakly mixing, $C_{w,\psi}$ has obviously dense range and the rest of ii) follows from Theorem \ref{almost characterization of transitivity}. If ii) holds, it follows from Proposition \ref{dense range} that iii) is true. 

Next, if iii) holds, condition a) from part i) of Theorem \ref{almost characterization of transitivity} is fulfilled. Let $(X_n)_{n\in\N}$ be the open, relatively compact exhaustion of $X$ from Proposition \ref{preparation Lax-Malgrange}. By the injectivity of $\psi$ it follows from Brouwer's Invariance of Domain Theorem that b1) of part i) from Theorem \ref{almost characterization of transitivity} is satisfied, while b2) is satisfied since $\psi$ is run-away. Fix $n\in\N$ and let $m\in\N$ be such that $X_n$ and $\psi^m(X_n)$ are disjoint. It follows from Proposition \ref{preparation Lax-Malgrange} that it is not possible to decompose $X\backslash\Big(X_n\cup\psi^m(X_n)\Big)$ into a relatively closed subset of $X$ and a non-empty compact subset of $K$ which are disjoint. Since $P$ is elliptic it follows from the Lax-Malgrange Theorem (see e.g.\ \cite[Theorem 4.4.5 combined with the remark preceding Corollary 4.4.4 resp.\ with Theorem 8.6.1]{Hoermander} or \cite[Theorem 3.10.7]{Narasimhan}) that
$$\{u_{|X_n\cup\psi^m(X_n)};\,u\in C_P^\infty(X)\}$$
is dense in $C_P^\infty(X_n\cup\psi^m(X_n))$, i.e.\ that $r_X^{X_n\cup\psi^m(X_n)}$ has dense range. Thus, conditions a) and b) from part i) of Theorem \ref{almost characterization of transitivity} are satisfied so that by this theorem $C_{w,\psi}$ is weakly mixing. Thus i)-iii) are equivalent. If additionally $|w(x)|\leq 1$ and $C_{w,\psi}$ is hypercyclic it follows from Theorem \ref{almost characterization of transitivity} that iii) holds. Since trivially i) implies iv), a) is proved.

The proof of part b) is mutatis mutandis a repetition of the above arguments with the reference to Theorem \ref{almost characterization of transitivity} replaced by a reference to Theorem \ref{almost characterization of mixing}.\hfill$\square$\\  

In the remainder of this section we are going to characterize the dynamics for weighted composition operators on eigenspaces of the Cauchy-Riemann operator and Laplace operator respectively, i.e.\ on $C_P^\infty$ for the polynomial in $d=2$ variables $P(\xi)=\frac{1}{2}(\xi_1+i\xi_2)-\lambda$, resp.\ in $d$ variables $P(\xi)=\sum_{j=1}^d \xi_j^2-\lambda$ where in both cases $\lambda\in\C$ is arbitrary. We begin our considerations for these special operators by determining explicitly the combinations of symbols and weights which yield well-defined weighted composition operators on $C_P^\infty(X)$. 

\begin{proposition}\label{well-definedness on eigenspaces of laplace and cauchy-riemann}
	\hspace{2em}
	\begin{itemize}
		\item[a)] Let $\lambda\in\C$ and let $d=2$ as well as $P(\xi)=\frac{1}{2}(\xi_1+i\xi_2)-\lambda$. For $X\subseteq\R^2=\C$ open, $w:X\rightarrow\C$ and $\psi:X\rightarrow X$ smooth functions the following are equivalent.
		\begin{itemize}
			\item[i)] $C_{w,\psi}$ is well-defined on $C_P^\infty(X)$.
			\item[ii)] $w\partial_{\bar{z}}\psi=0$ and $P(\partial)w=-\lambda w\,\partial_{\bar{z}}\bar{\psi}$.
		\end{itemize}
	Moreover, if $C_{w,\psi}$ is well-defined on $C_P^\infty(X)$ it follows that for every $Y\subseteq X$ open and $f\in C^\infty(Y)$ we have
	$$P(\partial)\big(C_{w,\psi}(f)\big)=\partial_{\bar{z}}\bar{\psi}\, C_{w,\psi}\big(P(\partial)f\big).$$
		\item[b)] Let $\lambda\in\C$ and let $d\geq 2$ as well as $P(\xi)=\sum_{j=1}^d \xi_j^2-\lambda$. For $X\subseteq\R^d$ open, $w:X\rightarrow\C$ and $\psi:X\rightarrow X$ smooth functions the following are equivalent.
		\begin{itemize}
			\item[i)] $C_{w,\psi}$ is well-defined on $C_P^\infty(X)$.
			\item[ii)] For every $1\leq j\neq k\leq d$ it holds $w|\nabla \psi_j|^2=w|\nabla\psi_k|^2$ as well as $w\langle\nabla\psi_j,\nabla\psi_k\rangle=0$, $w\Delta\psi_j+2\langle\nabla w,\nabla\psi_j\rangle =0$, and $P(\partial)w=-\lambda w|\nabla\psi_1|^2$.
		\end{itemize}
		Moreover, if $C_{w,\psi}$ is well-defined on $C_P^\infty(X)$ it follows that for every $Y\subseteq X$ open and $f\in C^\infty(Y)$ we have
		$$P(\partial)\big(C_{w,\psi}(f)\big)=|\nabla\psi_1|^2\, C_{w,\psi}\big(P(\partial)f\big).$$
	\end{itemize}	
\end{proposition}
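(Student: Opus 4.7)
The plan is to expand $P(\partial)(w\,(f\circ\psi))$ via Leibniz and the chain rule, test the identity against exponential solutions $e_\zeta(x):=\exp(\langle\zeta,x\rangle)$ of $P(\partial)u=0$ with $\zeta$ on the variety $V(P):=\{\zeta\in\C^d:P(\zeta)=0\}$, and then invoke Hilbert's Nullstellensatz. Because in each part the relevant polynomial $P$ is square-free of low degree, any polynomial in $\zeta$ of correspondingly low degree that vanishes on $V(P)$ must be a scalar multiple of $P$ itself; matching coefficients pointwise in $X$ reads off exactly the equations in ii). The converse implication and the displayed \emph{moreover} formula are then obtained by substituting ii) back into the same expansion, this time applied to an arbitrary smooth $f$.

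For part a), the Wirtinger chain rule $\partial_{\bar z}(f\circ\psi)=(\partial_z f)\circ\psi\,\partial_{\bar z}\psi+(\partial_{\bar z}f)\circ\psi\,\partial_{\bar z}\bar\psi$ yields
\begin{equation*}
P(\partial)(w\,(f\circ\psi))=(P(\partial)w)(f\circ\psi)+w\,\partial_{\bar z}\psi\,(\partial_z f)\circ\psi+w\,\partial_{\bar z}\bar\psi\,(\partial_{\bar z}f)\circ\psi.
\end{equation*}
If $f\in C_P^\infty(X)$ the last factor equals $\lambda\,f\circ\psi$, and the vanishing of the left-hand side at an arbitrary $z_0\in X$ reduces to a single linear relation between $f(\psi(z_0))$ and $(\partial_z f)(\psi(z_0))$. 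Since $C_P^\infty(X)=\{e^{\lambda\bar z}g(z):g\in\mathscr{H}(X)\}$, the pair $(f(q),(\partial_z f)(q))$ attains every value in $\C^2$ at any fixed $q\in X$ by choosing for $g$ an affine polynomial in $z-q$; hence both coefficients must vanish pointwise on $X$, giving ii). Conversely, substituting ii) into the displayed formula collapses it to $P(\partial)(w\,(f\circ\psi))=\partial_{\bar z}\bar\psi\cdot w\,(P(\partial)f)\circ\psi=\partial_{\bar z}\bar\psi\cdot C_{w,\psi}(P(\partial)f)$, which is simultaneously the \emph{moreover} identity and the well-definedness statement.

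For part b), the analogous Leibniz/chain-rule computation gives
\begin{align*}
P(\partial)(w\,(f\circ\psi)) &= (P(\partial)w)(f\circ\psi)+\sum_{k}(\partial_k f)\circ\psi\,\bigl(2\langle\nabla w,\nabla\psi_k\rangle+w\Delta\psi_k\bigr)\\
&\quad+w\sum_{k,l}(\partial_k\partial_l f)\circ\psi\,\langle\nabla\psi_k,\nabla\psi_l\rangle.
\end{align*}
Fix $z_0\in X$, substitute $f=e_\zeta$ with $\zeta\in V(P)$, and divide by $e^{\langle\zeta,\psi(z_0)\rangle}$; the vanishing condition becomes a polynomial $Q(\zeta;z_0)$ in $\zeta$ of degree at most two that must vanish on $V(P)$. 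Since $P(\zeta)=\sum_j\zeta_j^2-\lambda$ is square-free (irreducible when $\lambda\neq 0$ or $d\geq 3$, and a product of two distinct linear factors when $\lambda=0$ and $d=2$), the Nullstellensatz forces $Q(\,\cdot\,;z_0)=\mu(z_0)\,P$ for some scalar $\mu(z_0)$. Matching coefficients of $1$, $\zeta_k$, $\zeta_k^2$, and $\zeta_k\zeta_l$ ($k\neq l$) reproduces exactly the four families of equations in ii), with $\mu=w|\nabla\psi_1|^2$. Plugging ii) back into the general formula kills the first sum termwise, collapses the double sum to $w|\nabla\psi_1|^2\,(\Delta f)\circ\psi$, and combines with $(P(\partial)w)(f\circ\psi)=-\lambda w|\nabla\psi_1|^2\,(f\circ\psi)$ to yield the \emph{moreover} identity $|\nabla\psi_1|^2\cdot C_{w,\psi}(P(\partial)f)$, hence well-definedness.

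The main obstacle is purely bookkeeping in part b): organizing the degree-two chain-rule expansion in $d$ variables and verifying that matching coefficients of the four monomial types reproduces precisely the equations listed in ii). The Nullstellensatz step itself is elementary once square-freeness of $P$ is observed, and the "richness" of $C_P^\infty(X)$ needed to force the pointwise vanishing is supplied for free by the exponentials $e_\zeta$, which live globally in $C_P^\infty(\R^d)$ and therefore restrict to $X$.
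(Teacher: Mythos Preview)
Your argument is correct and lands on the same expansion formulas as the paper, but the way you extract conditions ii) from well-definedness is genuinely different. In part a) the paper plugs in the specific exponentials $e_{(2\lambda,0)}$ and $e_{(0,-2i\lambda)}$ (and, for $\lambda=0$, the linear function $x_1+ix_2$) and adds/subtracts the resulting identities to isolate $w\partial_{\bar z}\psi$ and $P(\partial)w+\lambda w\partial_{\bar z}\bar\psi$ one at a time; you instead use the explicit description $C_P^\infty(X)=e^{\lambda\bar z}\mathscr{H}(X)$ and the $2$-jet freedom of holomorphic functions to force both coefficients to vanish simultaneously. In part b) the paper again chooses finitely many special points on $V(P)$ --- first $\pm\sqrt{\lambda}\,e_j$, then $\sqrt{\lambda+\alpha}\,e_j\pm i\sqrt{\alpha}\,e_k$ --- and manipulates the resulting equations by hand, whereas you test against the whole of $V(P)$ at once and let the Nullstellensatz (together with the degree bound and square-freeness of $P$) do the coefficient matching in one stroke. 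Your route is shorter and more structural, and it makes transparent why exactly four families of conditions appear (they are the coefficients of $1,\zeta_k,\zeta_k^2,\zeta_k\zeta_l$ in $\mu P$); the paper's route is more elementary in that it never leaves calculus, at the cost of a longer case analysis and separate handling of $\lambda=0$. Both approaches yield the \emph{moreover} identity by the same back-substitution.
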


\begin{proof}
	We use the notation $f(\psi)$ instead of $f\circ\psi$ in order to slightly simplify notation. In order to prove a) it is straightforward to verify that for every $f\in C^\infty(X)$
	\begin{equation}\label{equation cr0)}
		P(\partial)\big(w\cdot f(\psi)\big)=(P(\partial)w)f(\psi)+w(\partial_{z}f)(\psi)\partial_{\bar{z}}\psi+w(\partial_{\bar{z}}f)(\psi)\partial_{\bar{z}}\bar{\psi}.
	\end{equation}
	Now assume that i) in part a) holds. Inserting $f=e_\zeta$ with $\zeta=(2\lambda,0)\in\C^2$ into equation (\ref{equation cr0)}) we obtain from $e_\zeta\in C_P^\infty(X)$ (recall that $e_\zeta(x)=\exp(\sum_{j=1}^2\zeta_j x_j)$) that
	$$0=\big(P(\partial)w+\lambda w(\partial_{\bar{z}}\psi+\partial_{\bar{z}}\bar{\psi})\big)e_\zeta$$
	so that
	\begin{equation}\label{equation cr a)}
		0=P(\partial)w+\lambda w(\partial_{\bar{z}}\psi+\partial_{\bar{z}}\bar{\psi}).
	\end{equation}
	Likewise, we derive from equation (\ref{equation cr0)}) by inserting $f=e_\eta$ with $\eta=(0,-2i\lambda)\in\C^2$ that
	\begin{equation}\label{equation cr b)}
		0=P(\partial)w-\lambda w(\partial_{\bar{z}}\psi-\partial_{\bar{z}}\bar{\psi}).
	\end{equation}
	Substracting equation (\ref{equation cr b)}) from equation (\ref{equation cr a)}) yields
	\begin{equation}\label{equation cr c)}
		0=\lambda w\partial_{\bar{z}}\psi
	\end{equation}
	while adding equations (\ref{equation cr a)}) and (\ref{equation cr b)}) gives
	\begin{equation}\label{equation cr d)}
		0=P(\partial)w+\lambda w\partial_{\bar{z}}\bar{\psi}.
	\end{equation}
	In case of $\lambda=0$ we evaluate equation (\ref{equation cr0)}) for $f(x)=x_1+ix_2$ which gives $w\partial_{\bar{z}}\psi=0$. In case of $\lambda\neq 0$ we have $w\partial_{\bar{z}}\psi=0$, too, by equation  (\ref{equation cr c)}) showing one half of ii). Additionally, evaluating equation (\ref{equation cr0)}) for arbitrary $f\in C_P^\infty(X)$ gives
	\begin{align*}
		\forall\,f\in C_P^\infty(X):\,0&=(P(\partial)w)f(\psi)+w(\partial_z f)(\psi)\partial_{\bar{z}}\psi+w(\partial_{\bar{z}}f)(\psi)\partial_{\bar{z}}\bar{\psi}\\&=(P(\partial)w)f(\psi)+\lambda w f(\psi) \partial_{\bar{z}}\bar{\psi}\\
		&=\big(\partial_{\bar{z}}w-(1-\partial_{\bar{z}}\bar{\psi})\lambda w\big)f(\psi).
	\end{align*}
	Inserting $e_\zeta$ with $\zeta$ as above into this equation gives $\partial_{\bar{z}}w-(1-\partial_{\bar{z}}\bar{\psi})\lambda w=0$ which proves that a) i) implies a) ii).
	
	On the other hand, if a) ii) is satisfied, it follows from equation (\ref{equation cr0)}) that for every $f\in C_P^\infty (X)$ we have
	$$P(\partial)\big(C_{w,\psi}(f)\big)=\partial_{\bar{z}}\bar{\psi}\, C_{w,\psi}\big(P(\partial)f\big)$$
	which proves a) i).
	
	To finish the proof of a), let $Y\subseteq X$ be open and assume that $C_{w,\psi}$ is well-defined on $C_P^\infty(X)$. Using a) ii) it is straightforward to derive - compare equation (\ref{equation cr0)})
	$$\forall\,f\in C^\infty(Y):P(\partial)\big(C_{w,\psi}(f)\big)=\partial_{\bar{z}}\bar{\psi}\, C_{w,\psi}\big(P(\partial)f\big).$$
	
	In order to prove b), we first notice that for $f\in C^\infty(X)$
	\begin{align}\label{equation 0}
		P(\partial)(w\cdot f(\psi))&=P(\partial)w\cdot f(\psi)+\sum_{l=1}^d\big(2\langle\nabla w,\nabla\psi_l\rangle+w\Delta\psi_l\big)\partial_l f(\psi)\nonumber\\
		&+w\big(\sum_{l=1}^d\sum_{m=1}^d(\partial_l\partial_m f)(\psi)\langle\nabla\psi_l,\nabla\psi_m\rangle\big).
	\end{align}
	We first show that b) i) implies b) ii). Inserting $f=e_{\zeta_j}, 1\leq j\leq d,$ with $\zeta_j=\sqrt{\lambda}(\delta_{j,l})_{1\leq l\leq d}$ for any root $\sqrt{\lambda}$ of $\lambda$ it follows from $e_{\zeta_j}\in C_P^\infty(X)$ that
	$$0=\Big(P(\partial)w+\sqrt{\lambda}\big(2\langle\nabla w,\nabla\psi_j\rangle+w\Delta\psi_j\big)+\lambda w|\nabla\psi_j|^2\Big)e_{\zeta_j}$$
	so that for every $1\leq j\leq d$
	\begin{align}\label{equation a)}
		0&=P(\partial)w+\sqrt{\lambda}\big(2\langle\nabla w,\nabla\psi_j\rangle+w\Delta\psi_j\big)+\lambda w|\nabla\psi_j|^2\nonumber\\
		&=\big(\Delta w-(1-|\nabla\psi_j|^2)\lambda w\big)+\sqrt{\lambda}\big(2\langle\nabla w,\nabla\psi_j\rangle+w\Delta\psi_j\big).
	\end{align}
	Analogously, inserting $f=e_{\tilde{\zeta}_j}, 1\leq j\leq d,$ with $\tilde{\zeta}_j=-\sqrt{\lambda}(\delta_{j,k})_{1\leq k\leq d}$ into equation (\ref{equation 0}) yields for $1\leq j\leq d$
	\begin{equation}\label{equation b)}
		0=\big(\Delta w-(1-|\nabla\psi_j|^2)\lambda w\big)-\sqrt{\lambda}\big(2\langle\nabla w,\nabla\psi_j\rangle+w\Delta\psi_j\big).
	\end{equation}
	Adding equations (\ref{equation a)}) and (\ref{equation b)}) gives
	\begin{equation}\label{equation f)}
		0=\Delta w-(1-|\nabla\psi_j|^2)\lambda w=P(\partial)w+\lambda|\nabla\psi_j|^2w,
	\end{equation}
	for every $1\leq j\leq d$ while substracting equation (\ref{equation b)}) from equation (\ref{equation a)}) gives
	\begin{equation}\label{equation c)}
		\forall\,1\leq j\leq d: 0=2\langle\nabla w,\nabla\psi_j\rangle+w\Delta\psi_j
	\end{equation}
	for $\lambda\neq 0$. In case of $\lambda=0$ we have that $f(x)=x_j\in C_P^\infty(X)$ and plugging this $f$ into (\ref{equation 0}) shows that (\ref{equation c)}) is also valid for $\lambda=0$.
	
	Next we insert $f=e_{\eta^\pm_{j,k}}, 1\leq j\neq k\leq d,$ into equation (\ref{equation 0}), where for $\alpha\in\C\backslash\{0,-\lambda\}$ $\eta_{j,k}^\pm=\sqrt{\lambda+\alpha}(\delta_{j,l})_{1\leq l\leq d}\pm i\sqrt{\alpha}(\delta_{k,l})_{1\leq l\leq d}$ resulting in
	\begin{align}\label{equation g)}
		0&=P(\partial)w+\sqrt{\lambda+\alpha}\big(2\langle\nabla w,\nabla \psi_j\rangle+w\Delta\psi_j\big)\nonumber\\
		& \pm i\sqrt{\alpha}\big(2\langle\nabla w,\nabla \psi_k\rangle+w\Delta\psi_k\big)\\
		& +w\big((\lambda+\alpha)|\nabla\psi_j|^2-\alpha|\psi_k|^2\pm 2i\sqrt{\alpha}\sqrt{\lambda+\alpha}\langle\nabla\psi_j,\nabla\psi_k\rangle\big).\nonumber
	\end{align}
	Substracting from the version with "+" of the above equation the version with "-" yields
	$$0=2i\sqrt{\alpha}\big(2\langle\nabla w,\nabla\psi_k\rangle+w\Delta\psi_k\big)+w4i\sqrt{\alpha}\sqrt{\lambda+\alpha}\langle\nabla\psi_j,\nabla\psi_k\rangle$$
	so taking into account equation (\ref{equation c)}) we derive
	\begin{equation}\label{equation d)}
		\forall\,1\leq j\neq k\leq d:\,0=w\langle\nabla\psi_j,\nabla\psi_k\rangle.
	\end{equation}
	Taking into account (\ref{equation c)}) and (\ref{equation d)}), equation (\ref{equation g)}) combined with (\ref{equation f)}) gives
	\begin{align}\label{equation e)}
		0&=\Delta w-(1-|\nabla\psi_j|^2)\lambda w+\alpha w\big(|\nabla\psi_j|^2-|\nabla\psi_k|^2\big)\\
		&=\alpha w\big(|\nabla\psi_j|^2-|\nabla\psi_k|^2\big).\nonumber
	\end{align}
	Since $\alpha\neq 0$, equation (\ref{equation e)}) together with equations (\ref{equation c)}) and (\ref{equation d)}) now give ii) in b).
	
	On the other hand, if ii) of b) is satisfied, equation (\ref{equation 0}) simplifies to
	\begin{align*}
		\forall\,f\in C^\infty(X):\,P(\partial)(w\cdot f(\psi))&=-\lambda w|\nabla\psi_1|^2\cdot f(\psi)+w|\nabla\psi_1|^2\big(\Delta f\big)(\psi)\\
		&=|\nabla\psi_1|^2w\big(P(\partial)f\big)(\psi),
	\end{align*}
	in particular $C_{w,\psi}$ is well-defined on $C_P^\infty(X)$ proving b) i).
	
	To finish the proof of b), let $Y\subseteq X$ be open and assume that $C_{w,\psi}$ is well-defined on $C_P^\infty(X)$. Using b) ii) it is straightforward to derive - compare equation (\ref{equation 0})
	$$\forall\,f\in C^\infty(Y): P(\partial)\big(C_{w,\psi}(f)\big)=|\nabla\psi_1|^2\,C_{w,\psi}\big(P(\partial)f\big).$$
\end{proof}

\begin{corollary}\label{dynamics on eigenspaces of the laplace operator}
	Let $d\geq 2$, $\lambda\in\C$ and let $P(\xi)=\sum_{j=1}^d\xi_j^2-\lambda$. Moreover, let $X\subseteq\R^d$ be homeomorphic to $\R^d$ and assume that the smooth mappings $w:X\rightarrow\C$ and $\psi:X\rightarrow X$ are such that $C_{w,\psi}$ is well-defined on $C_P^\infty(X)$.
	\begin{itemize}
		\item[a)]  For $C_{w,\psi}$ the following are equivalent.
		\begin{itemize}
			\item[i)] $C_{w,\psi}$ is weakly mixing on $C_P^\infty(X)$.
			\item[ii)] $w$ has no zeros, $\psi$ is injective as well as run-away and satisfies $\mbox{det}J\psi(x)\neq 0$ for each $x\in X$.
		\end{itemize}
		If additionally $|w(x)|\leq 1$ for all $x\in X$ the above are equivalent to
		\begin{itemize}
			\item[iii)] $C_{w,\psi}$ is hypercyclic on $C_P^\infty(X)$.
		\end{itemize}
		\item[b)]  For $C_{w,\psi}$ the following are equivalent.
		\begin{itemize}
			\item[i)] $C_{w,\psi}$ is mixing on $C_P^\infty(X)$.
			\item[ii)] $w$ has no zeros, $\psi$ is injective as well as strong run-away and $\mbox{det}J\psi(x)\neq 0$ for each $x\in X$.
		\end{itemize}
	\end{itemize}
\end{corollary}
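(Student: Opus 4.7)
\textbf{The plan} is to deduce Corollary \ref{dynamics on eigenspaces of the laplace operator} directly from Theorem \ref{dynamics on elliptic kernels}: $P(\xi)=\sum_{j=1}^d\xi_j^2-\lambda$ is elliptic (its principal part vanishes only at the origin), $X$ is homeomorphic to $\R^d$, and $C_{w,\psi}$ acts locally on $C_P^\infty(X)$ because the intertwining identity
$$P(\partial)(C_{w,\psi}f)=|\nabla\psi_1|^2\,C_{w,\psi}(P(\partial)f)$$
from Proposition \ref{well-definedness on eigenspaces of laplace and cauchy-riemann} b) is pointwise and therefore transfers verbatim to every open $Y\subseteq X$, showing that $C_{w,\psi,Y}$ maps $C_P^\infty(Y)$ into $C_P^\infty(\psi^{-1}(Y))$. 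The necessary direction in a) (and the equivalence with hypercyclicity under $|w|\leq 1$) as well as the necessary direction in b) then follow at once from Theorem \ref{dynamics on elliptic kernels}, \emph{including} the clause $\det J\psi(x)\neq 0$. What remains is the sufficient direction: given $w$ without zeros, $\psi$ injective and (strong) run-away with $\det J\psi\neq 0$, I have to verify condition iii) of Theorem \ref{dynamics on elliptic kernels} a) (resp.\ b)), i.e.\ that for every $m\in\N_0$ and every open, relatively compact $Y\subseteq\psi^m(X)$,
$$r_X^{(\psi^m)^{-1}(Y)}\bigl(C_P^\infty(X)\bigr)\subseteq\overline{T_m\bigl(C_P^\infty(Y)\bigr)},\quad T_m:=C_{w,\psi,(\psi^{m-1})^{-1}(Y)}\circ\ldots\circ C_{w,\psi,Y}.$$

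I would actually establish the stronger statement that every $f\in C_P^\infty((\psi^m)^{-1}(Y))$ already lies in $T_m(C_P^\infty(Y))$, without any closure. Injectivity of $\psi$ together with $\det J\psi(x)\neq 0$ gives, via the Inverse Function Theorem, a smooth diffeomorphism $\psi^m:(\psi^m)^{-1}(Y)\to Y$, and since $w$ is zero-free the formula
$$\tilde f(y):=\frac{f\bigl((\psi^m)^{-1}(y)\bigr)}{\prod_{j=0}^{m-1}w\bigl(\psi^j((\psi^m)^{-1}(y))\bigr)},\quad y\in Y,$$
defines a smooth function on $Y$ satisfying $T_m(\tilde f)=f$ by direct substitution. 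To verify $\tilde f\in C_P^\infty(Y)$, I would iterate the intertwining identity above to obtain, for every smooth $g$ on $Y$ and every $x\in(\psi^m)^{-1}(Y)$,
$$P(\partial)\bigl(T_m(g)\bigr)(x)=\Bigl(\prod_{j=0}^{m-1}|\nabla\psi_1(\psi^j(x))|^2\Bigr)\,T_m\bigl(P(\partial)g\bigr)(x).$$
The relations in Proposition \ref{well-definedness on eigenspaces of laplace and cauchy-riemann} b) ii), after dividing by the zero-free $w$, amount to the conformality identity $(J\psi)(J\psi)^{T}=|\nabla\psi_1|^2\,I_d$, so that $|\det J\psi|=|\nabla\psi_1|^d$ and the hypothesis $\det J\psi\neq 0$ forces $|\nabla\psi_1|^2>0$ throughout $X$.

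Applying the iterated identity to $g=\tilde f$ then gives $0=P(\partial)f=\bigl(\prod_{j=0}^{m-1}|\nabla\psi_1(\psi^j(\cdot))|^2\bigr)\,T_m(P(\partial)\tilde f)$; since the bracketed product and $w$ are nowhere zero and $\psi^m$ is onto $Y$, this forces $P(\partial)\tilde f=0$ on $Y$, i.e.\ $\tilde f\in C_P^\infty(Y)$. Condition iii) of Theorem \ref{dynamics on elliptic kernels} a) is therefore satisfied and $C_{w,\psi}$ is weakly mixing, which completes part a); part b) is proved by the same argument with ``run-away'' replaced by ``strong run-away'' and Theorem \ref{dynamics on elliptic kernels} b) invoked in place of a). The only genuine obstacle in this plan is the bookkeeping of the iterated intertwining identity together with the algebraic observation linking $\det J\psi$ with $|\nabla\psi_1|^2$; no new approximation input is required beyond what already underlies Theorem \ref{dynamics on elliptic kernels}.
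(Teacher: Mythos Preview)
Your proposal is correct and follows essentially the same route as the paper's proof: both deduce the corollary from Theorem \ref{dynamics on elliptic kernels} by using the intertwining identity of Proposition \ref{well-definedness on eigenspaces of laplace and cauchy-riemann} b) first to verify that $C_{w,\psi}$ acts locally, and then to show that the explicit preimage $\tilde f$ lies in $C_P^\infty(Y)$ (not merely in the closure), thereby satisfying condition iii) of Theorem \ref{dynamics on elliptic kernels}. Your argument is in fact slightly more explicit than the paper's in two places: you spell out the conformality relation $(J\psi)(J\psi)^T=|\nabla\psi_1|^2 I_d$ to justify $|\nabla\psi_1|^2\neq 0$ from $\det J\psi\neq 0$ (the paper simply asserts this), and your iterated intertwining formula correctly displays the product $\prod_{j=0}^{m-1}|\nabla\psi_1(\psi^j(x))|^2$ where the paper writes the somewhat ambiguous $|\nabla\psi_1|^{2m}$.
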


\begin{proof}
	Because $C_{w,\psi}$ is well-defined on $C_P^\infty(X)$ it follows from Proposition \ref{well-definedness on eigenspaces of laplace and cauchy-riemann} b) that for all $Y\subseteq X$ open we have
	\begin{equation}\label{commuting}
		\forall\,f\in C^\infty(Y): P(\partial)\big(C_{w,\psi}(f)\big)=|\nabla\psi_1|^2\,C_{w,\psi}\big(P(\partial)f\big)
	\end{equation}
	Clearly, (\ref{commuting}) implies that $C_{w,\psi}$ acts locally on $C_{w,\psi}(X)$ so that by Theorem \ref{dynamics on elliptic kernels} we only have to show that a) ii) implies a) i) and that b) ii) implies b) i), respectively. This will be done once we have shown that under a) ii), respectively b) ii), it holds that for each $m\in\N_0$ and every open, relatively compact $Y\subseteq\psi^m(X)$ we have
	$$r_X^{(\psi^m)^{-1}(Y)}(C_P^\infty(X))\subseteq\overline{\big(C_{w,\psi,(\psi^{m-1})^{-1}(Y)}\circ\ldots\circ C_{w,\psi,Y}\big)(C_P^\infty(Y))}.$$
	If a) ii), respectively b) ii), holds we have $\mbox{det}J\psi(x)\neq 0$ for each $x\in X$ and we conclude that $\psi^m(X)$ is open and $(\psi^{m})^{-1}:\psi^m(X)\rightarrow X$ is smooth for every $m\in\N$ as well as $|\nabla\psi_1(x)|^2\neq 0$ for all $x\in X$. Moreover, (\ref{commuting}) implies for every open set $Y\subseteq \psi^m(X)$ and every $f\in C^\infty((\psi^{m})^{-1}(Y))$ that
	$$P(\partial)f=|\nabla\psi_1|^{2m}\prod_{j=0}^{m-1}w(\psi^j(\cdot))\Big(\big(P(\partial)[\big(\frac{f}{\prod_{j=0}^{m-1}w(\psi^j(\cdot))}\big)\circ(\psi^m)^{-1}]\big)\circ\psi^m\Big)$$
	which in turn yields
	$$P(\partial)\Big(\big(\frac{f}{\prod_{j=0}^{m-1}w(\psi^j(\cdot))}\big)\circ(\psi^m)^{-1}\Big)=\Big(\frac{P(\partial)f}{|\nabla\psi_1|^{2m}\prod_{j=0}^{m-1}w(\psi^j(\cdot))}\Big)\circ(\psi^m)^{-1}.$$
	Therefore, for each $m\in\N$, for every open $Y\subseteq \psi^m(X)$, and every $f\in C_P^\infty((\psi^{m})^{-1}(Y))$ it follows
	$$\big(\frac{f}{\prod_{j=0}^{m-1}w(\psi^j(\cdot))}\big)\circ(\psi^m)^{-1}\in C_P^\infty(Y)$$
	so that by Remark \ref{considering the almost characterization} iii) we have
	$$C_P^\infty((\psi^{m})^{-1}(Y))\subseteq\big(C_{w,\psi,(\psi^{m-1})^{-1}(Y)}\circ\ldots\circ C_{w,\psi,Y}\big)(C_P^\infty(Y)).$$
	In particular,
	$$r_X^{(\psi^m)^{-1}(Y)}(C_P^\infty(X))\subseteq\overline{\big(C_{w,\psi,(\psi^{m-1})^{-1}(Y)}\circ\ldots\circ C_{w,\psi,Y}\big)(C_P^\infty(Y))}$$
	which is all that had to be shown.
\end{proof}

We close this section by applying Theorem \ref{dynamics on elliptic kernels} to characterize dynamics of weighted composition operators on eigenspaces of the Cauchy-Riemann operator. For the case $\lambda=0$ the equivalence of iii) and iv) is established in \cite{YousefiRezaei} while the equivalence of iii) and i) for $\lambda=0$ was also considered in \cite{Bes}, both without any restriction on the range of the weight $w$.

\begin{corollary}\label{dynamics on eigenspaces of the cauchy-riemann operator}
	Let $d=2$, $\lambda\in\C$ and let $P(\xi)=\frac{1}{2}\big(\xi_1+i\xi_2\big)-\lambda$. Moreover, let $X\subseteq\R^2$ be homeomorphic to $\R^2$ and assume that the smooth mappings $w:X\rightarrow\C$ and $\psi:X\rightarrow X$ are such that $C_{w,\psi}$ is well-defined on $C_P^\infty(X)$. Then, the following are equivalent.
	\begin{itemize}
		\item[i)] $C_{w,\psi}$ is mixing on $C_P^\infty(X)$ 
		\item[ii)] $C_{w,\psi}$ is weakly mixing on $C_P^\infty(X)$.
		\item[iii)] $w$ has no zeros, $\psi$ is injective, holomorphic, and has no fixed point.
	\end{itemize}
	If additionally $|w(x)|\leq 1$ for all $x\in X$ the above are equivalent to
	\begin{itemize}
		\item[iv)] $C_{w,\psi}$ is hypercyclic on $C_P^\infty(X)$.
	\end{itemize}
\end{corollary}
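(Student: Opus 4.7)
The plan is to apply Theorem \ref{dynamics on elliptic kernels} to the (elliptic) Cauchy--Riemann--type operator $P(\partial)=\partial_{\bar z}-\lambda$, following the same strategy as the proof of Corollary \ref{dynamics on eigenspaces of the laplace operator}, while exploiting two additional facts that are special to dimension two: (a) by Proposition \ref{well-definedness on eigenspaces of laplace and cauchy-riemann} a), well-definedness of $C_{w,\psi}$ together with $w$ having no zeros forces $\partial_{\bar z}\psi=0$, i.e.\ $\psi$ is holomorphic; and (b) since $X$ is homeomorphic to $\R^2$, it is in particular a simply connected open subset of $\C$, so by the Riemann Mapping Theorem combined with the Denjoy--Wolff Iteration Theorem (as already recorded after Definition \ref{run-away}), for a holomorphic self-map $\psi\colon X\to X$ the properties \emph{run-away}, \emph{strong run-away}, and \emph{having no fixed point} are mutually equivalent.

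First I would verify that $C_{w,\psi}$ acts locally on $C_P^\infty(X)$. By the identity
\[
P(\partial)\big(C_{w,\psi}(f)\big)=\partial_{\bar{z}}\bar{\psi}\,C_{w,\psi}\big(P(\partial)f\big)
\]
from Proposition \ref{well-definedness on eigenspaces of laplace and cauchy-riemann} a), applied on every open $Y\subseteq X$, the operator $C_{w,\psi,Y}$ maps $C_P^\infty(Y)$ into $C_P^\infty(\psi^{-1}(Y))$. Hence Theorem \ref{dynamics on elliptic kernels} is applicable. The implication i)$\Rightarrow$ii) is trivial. For ii)$\Rightarrow$iii), Theorem \ref{dynamics on elliptic kernels} a) gives that $w$ has no zeros and that $\psi$ is injective and run-away; Proposition \ref{well-definedness on eigenspaces of laplace and cauchy-riemann} a) then forces $\partial_{\bar z}\psi=0$, so $\psi$ is holomorphic, and Denjoy--Wolff finally converts run-away into the absence of fixed points.

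For iii)$\Rightarrow$i), I would observe that $\psi$ holomorphic and injective on the simply connected domain $X$ has nowhere vanishing derivative, so $\det J\psi=|\psi'|^2\neq 0$ everywhere, and that absence of a fixed point upgrades run-away to strong run-away via Denjoy--Wolff. To conclude with Theorem \ref{dynamics on elliptic kernels} b) I only need to check the local approximation condition (its item iii). Iterating the commutation formula above gives
\[
P(\partial)\big(C_{w,\psi}^{m}\tilde f\big)=\Big(\prod_{j=0}^{m-1}(\partial_{\bar z}\bar\psi)\circ\psi^{j}\Big)\,C_{w,\psi}^{m}\big(P(\partial)\tilde f\big),
\]
and since $\partial_{\bar z}\bar\psi=\overline{\psi'}$ is zero-free, the argument of Remark \ref{considering the almost characterization} iii) applies verbatim: for any $f\in C_P^\infty\big((\psi^{m})^{-1}(Y)\big)$ the natural pre-image
\[
\tilde f=\Big(\frac{f}{\prod_{j=0}^{m-1}w\circ\psi^{j}}\Big)\circ(\psi^{m})^{-1}
\]
belongs to $C_P^\infty(Y)$ and is a genuine solution of $(C_{w,\psi,(\psi^{m-1})^{-1}(Y)}\circ\cdots\circ C_{w,\psi,Y})(\tilde f)=f$, not merely an approximant. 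Thus the inclusion in condition iii) of Theorem \ref{dynamics on elliptic kernels} b) holds with equality, and $C_{w,\psi}$ is mixing. The added equivalence with iv) under $|w|\leq 1$ is immediate from the last assertion of Theorem \ref{dynamics on elliptic kernels} a).

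The only non-routine point is the passage ii)$\Rightarrow$iii): converting the run-away property provided by the general theorem into ``no fixed point'', which rests on the simply-connectedness of $X$ (itself a consequence of $X$ being homeomorphic to $\R^2$) and on the Denjoy--Wolff/Riemann mapping argument. Everything else is either a direct appeal to Theorem \ref{dynamics on elliptic kernels}, a short calculation with the commutation identity of Proposition \ref{well-definedness on eigenspaces of laplace and cauchy-riemann} a), or a mild adaptation of the proof of Corollary \ref{dynamics on eigenspaces of the laplace operator}.
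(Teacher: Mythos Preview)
Your proposal is correct and follows essentially the same route as the paper's proof. Two minor remarks: (1) in the step ii)$\Rightarrow$iii), run-away already implies the absence of fixed points trivially (take $K=\{x_0\}$), so invoking Denjoy--Wolff there is unnecessary; the paper simply writes ``Especially, $\psi$ has no fixed point.'' (2) The Denjoy--Wolff/Riemann mapping equivalence recorded after Definition \ref{run-away} is stated only for simply connected $X\subsetneq\C$; the case $X=\C$ (where injective entire maps are affine, hence translations when fixed-point free) needs a separate one-line argument, and the paper sidesteps this by citing \cite[Proof of Theorem 3.1]{Bes} for the strong run-away conclusion.
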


\begin{proof}
	We identify $\R^2$ with $\C$. Because $C_{w,\psi}$ is well-defined on $C_P^\infty(X)$ it follows from Proposition \ref{well-definedness on eigenspaces of laplace and cauchy-riemann} b) that for all $Y\subseteq X$ open we have
	\begin{equation}\label{commuting2}
	\forall\,f\in C^\infty(Y):\,P(\partial)\big(C_{w,\psi}(f)\big)=\partial_{\bar{z}}\bar{\psi}\,C_{w,\psi}\big(P(\partial) f\big)
	\end{equation}
	Equation (\ref{commuting2}) implies that $C_{w,\psi}$ acts locally on $C_{w,\psi}(X)$.
	
	Clearly, i) implies ii) and by Theorem \ref{dynamics on elliptic kernels}, if ii) holds, in particular, $w$ has no zeros, $\psi$ is injective and run-away. Especially, $\psi$ has no fixed point. Since $w$ has no zeros it follows from Proposition \ref{well-definedness on eigenspaces of laplace and cauchy-riemann} a) ii) that $\psi$ is holomorphic so that ii) implies iii).
	
	Next, if iii) holds, the composition operator $C_\psi$ is in particular a well-defined continuous linear operator on $\mathscr{H}(X)$, the holomorphic functions on $X$. Since $X$ is homeomorphic to $\C$, $X$ is a simply connected domain in $\C$ and because $\psi$ is injective and has no fixed point, it follows from \cite[Proof of Theorem 3.1]{Bes} that $\psi$ is strong run-away. From the injectivity of the holomorphic mapping $\psi$ we conclude that $0\neq |\psi'(z)|=\mbox{det}J\psi(z)$ for all $z\in X$ (see e.g\ \cite[Theorem 10.33]{Rudin}). In view of Theorem \ref{dynamics on elliptic kernels} we only have to show that for each $m\in\N_0$ and every open, relatively compact $Y\subseteq\psi^m(X)$ we have
	$$r_X^{(\psi^m)^{-1}(Y)}(C_P^\infty(X))\subseteq\overline{\big(C_{w,\psi,(\psi^{m-1})^{-1}(Y)}\circ\ldots\circ C_{w,\psi,Y}\big)(C_P^\infty(Y))}.$$
	From $0\neq \mbox{det}J\psi(x)=|\partial_z\psi(x)|^2$ for all $x\in X$ together with $\partial_{\bar{z}}\bar{\psi}=\overline{\partial_z\psi}$ it follows that $\partial_{\bar{z}}\bar{\psi}$ has no zeros in $X$. Thus, equation (\ref{commuting2}) can be used as equation (\ref{commuting}) in the proof of Corollary \ref{dynamics on eigenspaces of the laplace operator} to prove that i) holds.
	
	Finally, if $|w(x)|\leq 1$ for all $x\in X$ it follows from Theorem \ref{dynamics on elliptic kernels} that ii) and iv) are equivalent which completes the proof. 
\end{proof}

Motivated by the previous result we close this paper with two open problems. While the first one is concerned with the general abstract setting the second one aims at a more manageable characterization of hypercyclicity/mixing for weighted composition operators on eigenspaces of the Laplace operator.

\begin{problem}
1. Are the additional assumptions in Proposition \ref{necessary transitive} v) superfluous to prove that $\psi$ is run-away whenever $C_{w,\psi}$ is transitive? If this is the case, the additional assumption on $w$ (or the sheaf $\mathscr{F}$) in Theorem \ref{almost characterization of transitivity} can be removed so that i)-iv) in Theorem \ref{dynamics on elliptic kernels} are equivalent as well as i)-iii) in Corollary \ref{dynamics on eigenspaces of the laplace operator} a) and i)-iv) in Corollary \ref{dynamics on eigenspaces of the cauchy-riemann operator}, without the additional assumption on the range of the weight $w$.

2. Let $X\subseteq\R^d$ be homeomorphic to $\R^d$, $\lambda\in\C$, and $P(\xi)=\sum_{j=1}^d\xi_j^2-\lambda$. Characterize those $w\in C^\infty(X), |w|\leq 1$ and smooth $\psi:X\rightarrow X$ such that
\begin{itemize}
	\item[i)] $\forall\,1\leq j\neq k\leq d: |\nabla \psi_j|^2=|\nabla\psi_k|^2\mbox{ and }\langle\nabla\psi_j,\nabla\psi_k\rangle=0$,
	\item[ii)] $\forall\,1\leq j\leq d:\,w\Delta\psi_j+2\langle\nabla w,\nabla\psi_j\rangle =0\mbox{ and }\Delta w-\lambda w=-\lambda|\nabla\psi_1|^2$,
	\item[iii)] $\psi$ is run-away.
\end{itemize}
Are there hypercyclic weighted composition operators on $C_P^\infty(X)$ which are not mixing?
\end{problem}

\subsection*{Acknowledgements}
The author would like to thank J.\ Wengenroth for pointing out Alexander Duality, \cite[Theorem 3.44]{Hatcher}, in connection with the version of the Jordan-Brouwer Separation Theorem which is used in this article. Moreover, the author is indebted to one of the anonymous referees for pointing out a gap in the proof of the previous version of Proposition \ref{necessary transitive} v).

\end{document}